\numberwithin{equation}{section}
\newtheorem{theorem}{Theorem}[section]
\newtheorem{lemma}[theorem]{Lemma}
\newtheorem{proposition}[theorem]{Proposition}
\newtheorem{corollary}[theorem]{Corollary}
\theoremstyle{definition}
\newtheorem{definition}[theorem]{Definition}
\newtheorem{remark}[theorem]{Remark}
\newtheorem*{ack}{Acknowledgements} 
\numberwithin{equation}{section}
\newcommand{\field}[1]{\mathbb{#1}}
\newcommand{\R}{\field{R}}
\newcommand{\C}{\field{C}}
\newcommand{\N}{\field{N}}
\newcommand{\cali}[1]{\mathscr{#1}}
\newcommand{\cC}{\cali{C}} \newcommand{\cA}{\cali{A}}
\newcommand{\cO}{\cali{O}}
\newcommand{\cR}{\cali{R}} 
\newcommand{\cK}{\cali{K}} \newcommand{\cT}{\cali{T}}
\newcommand{\calig}[1]{\mathcal{#1}}
 \newcommand\mH{\calig{H}}
 \newcommand\mO{\calig{O}}
 \newcommand\mR{\calig{R}}
 \newcommand{\cP}{\cali{P}}
\def\bE{{\boldsymbol E}}
\newcommand{\boldsym}[1]{\boldsymbol{#1}}
\newcommand\bb{\boldsym{b}}
\newcommand\bJ{\boldsym{J}}
\newcommand\bt{\boldsym{t}}
\newcommand\bo{\boldsym{o}}
\def\Re{{\rm Re}}
\def\Im{{\rm Im}}
\newcommand{\imat}{\sqrt{-1}}
\DeclareMathOperator{\End}{End}
\DeclareMathOperator{\Ker}{Ker}
\DeclareMathOperator{\rank}{rk}
\DeclareMathOperator{\Id}{Id}
\DeclareMathOperator{\supp}{supp}
\DeclareMathOperator{\tr}{Tr}
\DeclareMathOperator{\vol}{vol}
\DeclareMathOperator{\spec}{Spec}
\newcommand{\norm}[1]{\lVert#1\rVert}
\newcommand{\abs}[1]{\lvert#1\rvert}
\newcommand{\om}{\omega}
\newcommand{\ov}{\overline}
\newcommand{\var}{\varepsilon}
\newcommand{\comment}[1]{}
\newcommand{\hp}{\mathcal{H}_p}
\newcommand{\hpc}{\C {\rm Id}_{\mH_{p}}}
\newcommand{\hph}{\operatorname{Herm}(\mathcal{H}_p)}
\DeclareMathOperator{\dist}{dist}
\begin{document}
%%%%%%%%%%%%%%%%%%%%%%%%%%%%%%%%%%%

%%%%    Title
\title{Semi-classical properties of Berezin--Toeplitz operators 
with $\cC^k$--\,symbol}

%%%   Author
\date{\today}
%    Information for author
\author{Tatyana Barron}
\address{Tatyana Barron, Department of Mathematics, 
University of Western Ontario, 
London, Ontario N6A 5B7, Canada}
\email{tatyana.barron@uwo.ca}
\thanks{T.\,B.\ and M.\,P.\ partially supported by the Natural 
Sciences and Engineering Research Council of Canada}
%    Information for first author
\author{Xiaonan Ma}
\address{Xiaonan Ma, Institut Universitaire de France
\& Universit{\'e} Paris Diderot - Paris 7,
UFR de Math{\'e}matiques, Case 7012,
75205 Paris Cedex 13, France}
\email{ma@math.jussieu.fr}
\thanks{X.\,M.\ partially supported by Institut Universitaire de 
France and the Shanghai Center for Mathematical Sciences}
%    Information for second author
\author{George Marinescu}
\address{George Marinescu, Universit{\"a}t zu K{\"o}ln,  
Mathematisches Institut, Weyertal 86-90, 50931 K{\"o}ln, Germany
    \& Institute of Mathematics `Simion Stoilow', Romanian Academy,
Bucharest, Romania}
\thanks{G.\,M.\ partially supported by DFG funded projects 
SFB/TR 12, MA 2469/2-1 and CNRS  at Universit\'e Piere et Marie 
Curie, Paris}
\email{gmarines@math.uni-koeln.de}
%    Information for author
\author{Martin Pinsonnault}
\address{Martin Pinsonnault, Department of Mathematics, 
University of Western Ontario, 
London, Ontario N6A 5B7, Canada}
\email{mpinson@uwo.ca}
\thanks{}
\subjclass[2000]{53D50, 53C21, 32Q15}

\begin{abstract}
We obtain the semi-classical expansion of the kernels and 
traces of Toeplitz operators with $\cC^k$--\,symbol on 
a symplectic manifold. We also give a semi-classical estimate 
of the distance of a Toeplitz operator to the space of self-adjoint 
and multiplication operators. 
\end{abstract}
\maketitle
\setcounter{section}{-1}

\tableofcontents

%%%%   Leave the following commands
%\pagestyle{myheadings}
%%%%%%%%%%%%%%%%%%%%%%%%%%%%%%%%%

\section{Introduction}\label{s0}
The purpose of this paper is to extend some of the semiclassical 
results about the Berezin-Toeplitz quantization to
the case of Toeplitz operators with $\cC^k$--\,symbol.

A fundamental problem in mathematical physics is to find relations
between classical and 
quantum mechanics. On one side we have symplectic manifolds 
and Poisson algebras, on the other
Hilbert spaces and selfadjoint operators. 
The goal is to establish a dictionary between these theories 
such that given a quantum system depending on a parameter, 
to obtain a classical system when the parameter approaches a 
so called semiclassical limit, in such a way that properties of the
quantum system are controlled up to first order by 
the underlying classical system. It is very interesting to 
go the other way, namely to quantize a classical system, 
that is, introduce a quantum system whose semiclassical limit
is the given classical system.

The aim of the geometric quantization theory of Kostant and Souriau 
is to relate the classical observables (smooth functions) 
on a phase space (a symplectic manifold) to the quantum 
observables (bounded linear operators) on the quantum space 
(holomorphic sections of a line bundle). Berezin-Toeplitz quantization 
is a particularly efficient version of the geometric quantization 
theory \cite{BFFLS,Berez:74,Cha03,Fedo:96,Kos:70,Ma10,Sou:70}.
Toeplitz operators and more generally Toeplitz structures 
were introduced in geometric quantization by
Berezin \cite{Berez:74} and Boutet de Monvel-Guillemin \cite{BoG}. 
Using the analysis of Toeplitz structures 
\cite{BoG}, Bordemann-Meinrenken-Schlichenmaier \cite{BMS} 
and Schlichenmaier \cite{Schlich:00} proved the existence of 
the asymptotic expansion for the composition of Toeplitz operators
in the K\"ahler case when we twist a trivial bundle $E=\C$. 

In \cite{MM07,MM08b} Ma-Marinescu
have extended the Berezin-Toeplitz quantization 
to symplectic manifolds and orbifolds
by using as quantum space the kernel of the Dirac operator 
acting on powers of the prequantum line bundle twisted with 
an arbitrary vector bundle. 
In \cite{MM12} Ma-Marinescu calculated the first coefficients of 
the kernel expansions of Toeplitz operators and of
their composition.

Let us review shortly the results from
\cite{MM07,MM08b,MM11}. We consider a compact 
symplectic manifold $X$ with symplectic form $\om$ 
and a Hermitian line bundle $(L,h^L,\nabla^L)$ whose curvature 
satisfies the prequantization condition  
$\frac{\sqrt{-1}}{2\pi}R^L=\omega$. 
Let $(E,h^E,\nabla^E)$ be a Hermitian vector bundle on $X$
with %Hermitian metric $h^E$ and 
Hermitian connection $\nabla^E$.
Let $J$ be a $\om$-compatible almost-complex structure and
$g^{TX}$ be a $J$-invariant metric on $TX$.
For any $p\in\N$ let $L^p:=L^{\otimes p}$ be the $p^{\text{th}}$ 
tensor power of $L$,
%$L^2(X,L^p)$ be the space of $L^2$-sections 
$\Omega^{0,\,\bullet}(X,L^p\otimes E)$ be the space of smooth 
anti-holomorphic forms with values in $L^p\otimes E$ 
with norm induced by $h^L$, $h^E$ and $g^{TX}$, and 
$P_p: \Omega^{0,\,\bullet}(X,L^p\otimes E)\to \Ker(D_p)$
be the orthogonal projection on the kernel of the Dirac operator 
$D_p$.

To any $f\in\cC^\infty(X,\End(E))$
we associate a sequence of linear operators 
\begin{equation}\label{0.0}
T_{f,\,p}: \Omega^{0,\,\bullet}(X,L^p\otimes E)
\to \Omega^{0,\,\bullet}(X,L^p\otimes E), 
\quad T_{f,\,p}=P_p\,f\,P_p\,,
\end{equation} 
where for simplicity we denote by $f$ the operator of 
multiplication with $f$.
For any $f,g\in \cC^\infty(X,\End(E))$,
 the product $T_{f,\,p}\,T_{g,\,p}$ has an asymptotic expansion 
\begin{equation}\label{0.2}
T_{f,\,p}\,T_{g,\,p}=\sum_{k=0}^\infty T_{C_k(f,\,g), p}\, p^{-k}
+\cO(p^{-\infty})
\end{equation}
in the sense of \eqref{atoe2.2}, where $C_k$ are 
bidifferential operators of order $\leqslant 2r$, 
satisfying $C_0(f,g)=fg$ and if $f,g\in  \cC^\infty(X)$,
$C_1(f,g)-C_1(g,f)=\sqrt{-1}\,\{f,g\}$. Here 
$\{ \,\cdot\, , \,\cdot\, \}$ is the Poisson bracket on 
$(X,2\pi \om)$
(cf. \eqref{toe4.1}). We deduce from \eqref{0.2},  
\begin{equation}\label{0.3}
[T_{f,\,p}\,,T_{g,\,p}]=\frac{\sqrt{-1}}{\, p}T_{\{f,g\},\,p}
+\mO(p^{-2})\,.
\end{equation}
Moreover, the norm of the Toeplitz operators allows to recover
the sup-norm of the classical observable
$f\in \cC^\infty(X,\End(E))$:
\begin{equation}\label{0.1}
\lim_{p\to\infty}\norm{T_{f,\,p}}={\norm f}_\infty
:=\sup_{x\in X} |f(x)|\,,
\end{equation}
 and $\|\cdot\|$ is the operator norm.
Thus the Poisson algebra $(\cC^\infty(X),\{ \cdot , \cdot \})$ 
is approximated by the operator algebras of Toeplitz operators 
(for $E=\C$) in the norm sense as $p\to \infty$; the role of 
the Planck constant is played by $\hbar=1/p$. 
This is the so-called semi-classical limit process.

All these papers consider the case of a smooth observable. 
The assumption that the symbol is $\cC^{\infty}$ is quite 
restrictive and analysts studying 
Toeplitz operators normally do not require this. 
While most results on Berezin-Toeplitz quantization 
are for smooth symbols, some progress has been made towards 
understanding what 
is happening with non-smooth symbols, in particular in work 
of L. Coburn and co-authors (see e.g., \cite{BC,C}).
It was remarked recently by Polterovich
\cite{Pol:12} that it is interesting to study the Berezin-Toeplitz 
quantization also for the case of continuous observables. 
A specific example of a situation where it would be helpful to 
know how to quantize non-smooth observables is quantization 
of the universal Teichm\"uller space in work of A. Sergeev 
(see, in particular, \cite{S}).
We will extend in this paper the relations 
\eqref{0.2}, \eqref{0.3}, \eqref{0.1} for $\cC^k$-symbols $f,g$.  
Moreover, we consider the question of how far is a Toeplitz 
operator from being self-adjoint or a multiplication with a function.

In this paper we shall use the kernel calculus of Toeplitz operators 
developed in \cite{MM07,MM08b,MM11,MM12} which 
lends itself very well to handling less regular symbols.

The plan of the paper is as follows. In Section \ref{s1} we recall 
the Bergman kernel expansion \cite{DLM04a,MM07}.
Section \ref{s2} is devoted to the %semi-classical 
kernel expansion of 
the Toeplitz operators. 
In Section \ref{s3} , we explain the expansion of a product of 
Toeplitz operators. In  Section \ref{s4}, we study the
asymptotics of the norm of Toeplitz operators.
Finally, in Section \ref{s5} we consider 
the semi-classical estimates of the distance of a Toeplitz operator 
to various spaces (self-adjoint operators, constant multiples 
of the identity, multiplication operators).
%%%%%%%%%%%%%%%%%%%%%%%%%%%%%%%%%%%%%
\section{Quantization of symplectic manifolds}\label{s1}
%%%%%%%%%%%%%%%%%%%%%%%%%%%%%%%%%%
We will briefly describe in this Section the study the Toeplitz 
operators and Berezin-Toeplitz quantization for symplectic manifolds. 
For details we refer the reader to \cite{MM07,MM08b} and 
to the surveys \cite{Ma10,{MM11}}.
We recall in Section \ref{sec:2.1} the definition of the spin$^c$ 
Dirac operator and formulate the spectral gap property for 
prequantum line bundles. In Section \ref{s5.3} we state the 
asymptotic expansion of the Bergman kernel.

%%%%%%%%%%%%%%%%%%%%%%%%%%%%%%%%%%
\subsection{Spectral gap of the spin$^c$ Dirac operator}
\label{sec:2.1}
%%%%%%%%%%%%%%%%%%%%%%%%%%%%%%%%%%
We will first show that in the general symplectic case the kernel 
of the spin$^c$ operator is a good substitute for the space 
of holomorphic sections used in K\"ahler quantization. 

Let $(X,\omega)$ be a compact symplectic manifold, 
$\dim_\mathbb{R} X=2n$, with compatible almost complex 
structure $J:TX\to TX$. 
Set $T^{(1,0)}X=\{u\in TX\otimes_{\R}\C:Ju=\imat u\}$ and 
$T^{(0,1)}X=\{u\in TX\otimes_{\R}\C:Ju=-\imat u\}$. 
Let $g^{TX}$ be a $J$-compatible Riemannian metric.
The Riemannian volume form of $g^{TX}$ is denoted by $dv_X$.

We do not assume that $g^{TX}(u,v)=\om(u,Jv)$ for $u,v\in TX$\,.
We relate $g^{TX}$ with $\om$ by means of the skew--adjoint 
linear map ${  \bJ}:TX\longrightarrow TX$ which satisfies 
the relation
\begin{equation} \label{to1.2}
\om(u,v)=g^{TX}({  \bJ}u,v)\quad\text{for}\quad u,v \in TX.
\end{equation}
Then $J$ commutes with ${  \bJ}$, and 
$J={  \bJ}(-{  \bJ}^2)^{-\frac{1}{2}}$.

Let $(L,h^L,\nabla^L)$ be a Hermitian line bundle on $X$, 
endowed with a Hermitian metric $h^L$ and a Hermitian connection 
$\nabla^L$, whose curvature is $R^L=(\nabla^L)^2$. 
We assume that the \emph{prequantization condition} 
\begin{equation}\label{prequantum}
\omega=\frac{\sqrt{-1}}{2\pi}R^{L}\
\end{equation}
is fulfilled.
Let $(E,h^E,\nabla^E)$ be a Hermitian vector bundle on $X$ with 
Hermitian metric $h^E$ and Hermitian connection $\nabla^E$.
We will be concerned with asymptotics in terms of high tensor 
powers $L^p\otimes E$, when $p\to\infty$,
that is, we consider the semi-classical limit $\hbar=1/p\to 0$.

Let us denote by 
\begin{align}\label{eq:1.1}
\bE:=\Lambda^{\bullet}(T^{*(0,1)}X)\otimes E 
\end{align}
the bundle of anti-holomorphic forms with values in $E$. 
The metrics $g^{TX}$, $h^L$ and $h^E$ induce an $L^2$-scalar 
product on $\Omega^{0,\,\bullet}(X,L^p\otimes E)
=\cC^{\infty}(X,L^p\otimes\bE)$ by
\begin{equation}\label{lm2.0}
\big\langle s_1,s_2 \big\rangle =\int_X\big\langle s_1(x),
s_2(x)\big\rangle_{L^p\otimes\bE}\,dv_X(x)\,,
\end{equation} 
whose completion is denoted 
$\big(L^2(X,L^p\otimes\bE),\|\cdot\|_{L^2}\big)$.
%$\big(L^2_{0,\,\bullet}(X,L^p\otimes E),\|\cdot\|_{L^2}\big)$. 

Let $\nabla^{\rm det}$ be the connection on $\det(T^{(1,0)}X)$ 
induced by the projection of the Levi-Civita connection 
$\nabla^{TX}$ on $T^{(1,0)}X$.
Let us consider the Clifford connection $\nabla^{\text{Cliff}}$ 
on $\Lambda^{\scriptscriptstyle{\bullet}}(T^{*(0,1)}X)$ 
associated to $\nabla^{TX}$ and to the connection 
$\nabla^{\rm det}$ on $\det(T^{(1,0)}X)$ 
(see e.g.\ \cite[\S\,1.3]{MM07}). The connections $\nabla^L$, 
$\nabla^E$ and $\nabla^{\text{Cliff}}$ induce the connection 
\[
\nabla_p=\nabla^{\text{Cliff}}\otimes\operatorname{Id}
+\operatorname{Id}\otimes\nabla^{L^p\otimes E}\quad
\text{on $\Lambda^{\bullet}(T^{*(0,1)}X)\otimes L^p\otimes E$.}
\]
The \emph{spin$^c$ Dirac operator} is defined by 
\begin{equation}\label{defDirac}
D_p=\sum_{j=1}^{2n}\mathbf{c}(e_j)\nabla_{p,\,e_j}:
\Omega^{0,\,\bullet}(X,L^p\otimes E)\longrightarrow
\Omega^{0,\,\bullet}(X,L^p\otimes E)\,,
\end{equation}
where $\{e_j\}_{j=1}^{2n}$ local orthonormal frame of $TX$ 
and $\mathbf{c}(v)=\sqrt{2}(v^\ast_{1,0}\wedge-i_{v_{\,0,1}})$ 
is the Clifford action of $v\in TX$. Here we use the decomposition 
$v=v_{\,1,0}+v_{\,0,1}$, 
$v_{\,1,0}\in T^{(1,0)}X$, $v_{\,0,1}\in T^{(0,1)}X$ 
and $v^\ast_{1,0}\in T^{*(0,1)}X$ is the dual of $v_{1,0}$.
 
%The Dirac operator $D_p$ interchanges 
%$\Omega^{0,\text{even}}(X,L^p\otimes E)$
%and $\Omega^{0,\text{odd}}(X,L^p\otimes E)$, and we set 
%$D_p^{-}=D_p\vert_\Omega^{0,\text{odd}}$.
%---------------------
\begin{remark}\label{R:kaehler}
Let us assume for a moment that $(X,J)$ is a complex manifold 
(i.\,e., $J$ is integrable) and the bundles $L$, $E$ are 
holomorphic and $\nabla^L$, $\nabla^E$ are the Chern connections. 
If $g^{TX}(u,v)=\om(u,Jv)$ for 
$u,v\in TX$ (thus $(X,g^{TX})$ is K\"ahler), then 
\begin{equation}\label{lm2.0b}
D_p=\sqrt{2}\big(\,\overline{\partial}
+\overline{\partial}^{\,*}\big),\quad \overline{\partial}
=\overline{\partial}^{L^p\otimes E}\,,
\end{equation}
so $\Ker(D_p)=H^0(X,L^p\otimes E)$ for $p\gg1$. 
The following result (Theorem \ref{specDirac}) shows that for 
a general symplectic manifold $\Ker(D_p)$ has all semi-classical 
properties of $H^0(X,L^p\otimes E)$.

Note that if $(X,J)$ is a complex manifold but $g^{TX}$ is not 
necessarily associated to $\omega$ by $g^{TX}(u,v)=\om(u,Jv)$, 
we can still work with the operator 
$\widetilde{D}_p:=\sqrt{2}\big(\overline{\partial}
+\overline{\partial}^{\,*}\big)$
instead of $D_p$ (cf.\ \cite[Theorem\,1.5.5]{MM07}), although 
$\widetilde{D}_p$ is only a modified Dirac operator. 
Theorem \ref{specDirac} 
and the results which follow remain valid for $\widetilde{D}_p$, 
so that, for $p$ large, the quantum space will be 
$H^0(X,L^p\otimes E)$ ($=\Ker(\widetilde{D}_p)$ for $p\gg1$). 
\end{remark}
%---------------------------
Let us return now to our general situation of a compact symplectic 
manifold $(X,\omega)$, endowed with a $\omega$--\,compatible 
almost complex structure $J$ and $J$--\,compatible Riemannian 
metric $g^{TX}$. Let
\begin{equation}\label{main1a}
    \mu_{0}=\inf\big\{R^L_x(u,\overline{u})/|u|^2_{g^{TX}}:
u\in T^{(1,0)}_xX\setminus\{0\},x\in X\big\}\,.
\end{equation}
By \eqref{prequantum} we have $\mu_0>0$.
%-----------------------------------
\begin{theorem}[{\cite[Th.\,1.1,\,2.5]{MM02}, 
    \cite[Th.\,1.5.5]{MM07}}]\label{specDirac}
There exists $C>0$ such that for any $p\in\mathbb{N}$ and any 
$s\in\bigoplus_{k>0}\Omega^{0,k}(X,L^p\otimes E)$, we have
\begin{equation}\label{main1}
\Vert D_{p}s\Vert^2_{L^2}
\geqslant(2\mu_0 p-C)\Vert s\Vert^2_{L^2}\, .
\end{equation}
Moreover, 
the spectrum of $D^2_p$ verifies 
%\begin{subequations}
\begin{align}\label{diag5}
&\spec (D^2_p)\subset \big\{0\big\}
\cup \big[2\mu_0 p-C,+\infty\big[\,.
%\\&\Ker(D_{p}^{-})=0\quad \text{ for } p\gg 1\,.\label{abk1.3}
\end{align}
%\end{subequations}
\end{theorem}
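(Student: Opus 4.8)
The plan is to establish the spectral gap \eqref{main1} by a Bochner--Kodaira--Nakano type argument, using the Lichnerowicz formula for $D_p^2$ and the positivity of the curvature $R^L$. First I would write down the Lichnerowicz formula for the square of the spin$^c$ Dirac operator: $D_p^2=\Delta_p+\frac{r^{TX}}{4}+\mathbf{c}(R^{\bE})+\mathbf{c}(R^{L^p})$, where $\Delta_p=(\nabla_p)^*\nabla_p$ is the Bochner Laplacian associated to the connection $\nabla_p$, $r^{TX}$ is the scalar curvature of $g^{TX}$, $\mathbf{c}(R^{\bE})=\sum_{j<k}\mathbf{c}(e_j)\mathbf{c}(e_k)R^{\bE}(e_j,e_k)$ is the contribution of the Clifford module $\bE=\Lambda^\bullet(T^{*(0,1)}X)\otimes E$ (including the twisting by $E$ and the induced connection on $\Lambda^\bullet(T^{*(0,1)}X)$ through $\nabla^{\det}$), and $\mathbf{c}(R^{L^p})=p\sum_{j<k}\mathbf{c}(e_j)\mathbf{c}(e_k)R^{L}(e_j,e_k)$ scales linearly in $p$.

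The key step is to identify the leading term $p\,\mathbf{c}(R^L)$ on the complex exterior algebra. Writing $\omega=\frac{\sqrt{-1}}{2\pi}R^L$ and diagonalizing $R^L$ with respect to $g^{TX}$ by eigenvalues $2\pi a_j(x)>0$ on $T^{(1,0)}_xX$ (with $a_j(x)\geqslant\mu_0/(2\pi)$ by the definition \eqref{main1a} of $\mu_0$), a standard computation with the Clifford action $\mathbf{c}(v)=\sqrt2(v^*_{1,0}\wedge-i_{v_{0,1}})$ shows that on $\Lambda^q(T^{*(0,1)}X)$ the operator $\mathbf{c}(R^L)$ acts — up to a scalar multiple of the identity $-2\pi\sum_j a_j$ coming from normal ordering — as $4\pi\sum_j a_j\, \overline{w}^j\wedge i_{\overline{w}_j}$ in a suitable unitary coframe; in particular it is nonnegative on $\Lambda^{\geqslant1}$ and annihilates $\Lambda^0$. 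Combined with the constant shift one gets, on $\bigoplus_{k>0}\Omega^{0,k}$, the pointwise lower bound $\mathbf{c}(R^{L^p})\geqslant 2\mu_0 p$ modulo a zeroth-order term. Dropping the nonnegative Bochner term $\Delta_p\geqslant0$ and absorbing $\frac{r^{TX}}{4}+\mathbf{c}(R^{\bE})$ together with the $p$-independent error from the above into a single constant $-C$ (these are bounded operators with $C$ depending only on the geometry of $X,g^{TX},h^E,\nabla^E$ but not on $p$), we obtain $\langle D_p^2 s,s\rangle\geqslant(2\mu_0 p-C)\|s\|^2$ for all $s\in\bigoplus_{k>0}\Omega^{0,k}(X,L^p\otimes E)$, which is exactly \eqref{main1}.

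For the spectral statement \eqref{diag5}, note that by the Atiyah--Singer index theorem (or the vanishing theorem implied by \eqref{main1}) the kernel of $D_p$ is concentrated in degree $0$ for $p$ large, but more directly: $D_p$ preserves neither the $\Z/2$-grading individually nor each $\Omega^{0,k}$, yet $D_p^2$ does preserve the splitting $\Omega^{0,0}\oplus\bigoplus_{k>0}\Omega^{0,k}$ only asymptotically. The cleaner route is to observe that $\Ker D_p=\Ker D_p^2$, that any eigensection of $D_p^2$ with eigenvalue $\lambda\in(0,2\mu_0p-C)$ would, after projecting to positive degrees, violate \eqref{main1} unless its positive-degree part vanishes — and an eigensection of $D_p^2$ supported in degree $0$ with small nonzero eigenvalue is excluded because $D_p$ maps degree $0$ into positive degrees, so $D_p s$ would be a positive-degree section with $\|D_p(D_p s)\|^2=\lambda\|D_p s\|^2<(2\mu_0p-C)\|D_p s\|^2$, contradicting \eqref{main1} again unless $D_p s=0$. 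Hence $\spec(D_p^2)\cap(0,2\mu_0p-C)=\varnothing$, giving \eqref{diag5}.

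The main obstacle is the precise bookkeeping in the second step: one must carefully separate the genuinely $O(p)$ positive contribution of $\mathbf{c}(R^{L^p})$ on positive-degree forms from the normal-ordering constant $-2\pi p\sum_j a_j$ (which, crucially, is \emph{cancelled} on positive degrees by the diagonal part of the same operator but \emph{not} cancelled on $\Lambda^0$, which is why the gap fails for $k=0$), and then verify that all remaining terms — the scalar curvature, the curvature of $E$, the curvature of the determinant line bundle entering $\nabla^{\text{Cliff}}$, and the cross terms — are uniformly bounded independently of $p$ so that they can be swept into $C$. Since this is exactly the content of \cite[Th.\,1.1,\,2.5]{MM02} and \cite[Th.\,1.5.5]{MM07}, I would at this point simply invoke those references rather than reproduce the estimate in full.
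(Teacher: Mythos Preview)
Your approach matches the paper's exactly: the paper does not prove Theorem~\ref{specDirac} in the text but simply states that ``the proof of Theorem~\ref{specDirac} is based on a direct application of the Lichnerowicz formula for $D_p^2$'' and defers to \cite{MM02,MM07}. Your outline of that Lichnerowicz computation --- isolating the $O(p)$ term $\mathbf{c}(R^{L^p})$, showing it is $\geqslant 2\mu_0 p$ on positive degrees, and absorbing the $p$--independent curvature terms into $C$ --- is the standard argument from those references.

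One small correction in your derivation of \eqref{diag5}: the step ``after projecting to positive degrees, violate \eqref{main1}'' does not work as stated, because $D_p^2$ does not preserve the splitting $\Omega^{0,0}\oplus\bigoplus_{k>0}\Omega^{0,k}$ (it maps $\Omega^{0,0}$ into $\Omega^{0,0}\oplus\Omega^{0,2}$), so the positive-degree projection of an eigensection need not satisfy any useful equation. The clean argument is to use only the $\Z/2$--grading, which $D_p^2$ \emph{does} preserve: an odd eigensection lies in $\bigoplus_{k>0}\Omega^{0,k}$ and \eqref{main1} applies directly; for an even eigensection $s$ with $D_p^2 s=\lambda s$, $\lambda>0$, the section $D_ps$ is odd (hence of positive degree) and nonzero, and applying \eqref{main1} to $D_ps$ gives $\lambda\|D_ps\|^2=\|D_p(D_ps)\|^2\geqslant(2\mu_0p-C)\|D_ps\|^2$. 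This is precisely the idea in the second half of your paragraph; you just don't need the faulty projection step that precedes it.
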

%-----------------------------------
\noindent
The proof of Theorem \ref{specDirac} is based on a 
direct application of the Lichnerowicz formula for $D_p^2$\,.

By the Atiyah-Singer index theorem we have for $p\gg1$,
\begin{equation}\label{as}
\dim \Ker(D_p)=\int_{X}\operatorname{Td}(T^{(1,0)}X)
\operatorname{ch}(L^{p}\otimes E)
=\rank (E)\,\frac{p^n}{n!}\int_X \omega^n+O(p^{n-1}) \,, 
\end{equation}
where $\operatorname{Td}$ is the Todd class and 
$\operatorname{ch}$ is the Chern character.
Theorem \ref{specDirac} shows the forms in $\Ker(D_p)$ 
concentrate asymptotically in the $L^2$ sense on 
their zero-degree component and \eqref{as} shows that 
$\dim\Ker(D_p)$ is a polynomial 
in $p$ of degree $n$, as in the holomorphic case. 
%This is the space which we shall use for the spin$^c$ quantization.
%-----------------------------------

%%%%%%%%%%%%%%%%%%%%%%%%%%%%%%%%%%
\subsection{Off-diagonal asymptotic expansion of Bergman kernel}
\label{s5.3}
%%%%%%%%%%%%%%%%%%%%%%%%%%%%%%%%%%
We recall that a bounded linear operator $T$ on 
$L^2(X,L^p\otimes \bE)$ is called
Carleman operator (see e.g., \cite{HalSu}) if there exists 
a kernel $T(\cdot\,,\cdot)$ such that 
$T(x,\cdot)\in L^2(X,(L^p\otimes\bE)_{x}
\otimes (L^p\otimes\bE)^{*})$ and 
\begin{equation}\label{lm2.01c}
(TS)(x)=\int_{X}T(x,x') S(x') dv_{X}(x')\,,\quad
\text{for all $S\in L^2(X,L^p\otimes\bE)$\,.}
\end{equation}
Let us introduce the orthogonal projection
\[
P_p:L^2(X,L^p\otimes\bE)\longrightarrow\Ker(D_p),
\] called the Bergman projection in analogy to the K\"ahler case. 
It is a Carleman operator whose integral kernel is called 
\emph{Bergman kernel}\/. 
Set $d_p := \dim \Ker(D_p)$.
Let $\{S^p_i\}_{i=1}^{d_p}$ be any orthonormal basis of
$\Ker(D_p)$ with respect to the inner
product \eqref{lm2.0}. Then
%-------------------------------------
\begin{equation} \label{bk2.4a}
P_p(x,x')= \sum_{i=1}^{d_p} S^p_i (x) \otimes (S^p_i(x'))^*
\in (L^p\otimes\bE)_{x}\otimes (L^p\otimes\bE)_{x'}^*\,.
\end{equation}
%-------------------------------------
The \emph{Toeplitz operator} with symbol 
$f\in L^{\infty}(X,\End(E))$ is defined by
%-------------------------------------
\begin{equation} \label{bk2.4}
T_{f,p}:L^2(X,L^p\otimes\bE)\to L^2(X,L^p\otimes\bE)\,,
\quad T_{f,p}= P_{p}fP_{p}\,,
\end{equation}
%-------------------------------------
where the action of $f$ is the pointwise multiplication by $f$.
The map which associates to  $f\in L^{\infty}(X,\End(E))$
the family of bounded operators $\{T_{f,\,p}\}_p$ on 
$L^2(X,L^p\otimes\bE)$ is called
the  {\em Berezin-Toeplitz quantization}\/.
Note that $T_{f,\,p}$ is a Carleman operator with smooth integral 
kernel given by
\begin{equation}\label{toe2.5}
T_{f,\,p}(x,x')=\int_X P_p(x,x'')f(x'')P_p(x'',x')\,dv_X(x'')\,.
\end{equation}
%---------------------------------------------------
The existence of the spectral gap expressed in 
Theorem \ref{specDirac} allows us to \emph{localize} the behavior 
of the Bergman kernel and of the kernel of Toeplitz operators.

Let $a^X$ be the injectivity radius of $(X, g^{TX})$.
We denote by $B(x,\var)=B^{X}(x,\var)$ and  
$B(0,\var)=B^{T_xX}(0,\var)$ the open balls
in $X$ and $T_x X$ with center $x$ and radius $\var$, respectively.
Then the exponential map $ T_x X\ni Z \to \exp^X_x(Z)\in X$ is a
diffeomorphism from $B^{T_xX} (0,\var)$ onto $B^{X} (x,\var)$ for
$\var\leqslant a^X$.  {}From now on, we identify $B^{T_xX}(0,\var)$
with $B^{X}(x,\var)$ via the exponential map for
$\var \leqslant a^X$.
Throughout what follows, $\varepsilon$ runs in the fixed 
interval $]0, a^X/4[$. % we shall fix $\var\in ]0,a^X/4[$\,.

%Let ${\boldsym{f}}:\R\to[0,1]$ be a smooth even function 
%such that ${\boldsym{f}}(v)=1$ for $|v| \leqslant  \var/2$, 
%and ${\boldsym{f}}(v) = 0$ for $|v| \geqslant \var$. Set
%\begin{equation} \label{0c3}
%F(a)= \Big(\int_{-\infty}^{+\infty}{\boldsym{f}}(v)\, dv\Big)^{-1} 
%\int_{-\infty}^{+\infty} e ^{i v a}\, {\boldsym{f}}(v)\, dv.
%\end{equation}
%Then $F(a)$ is an even function and lies in the Schwartz space 
%$\mathcal{S} (\R)$ and $F(0)=1$.

\noindent
By \cite[Proposition\,4.1]{DLM04a}, we have the {\em far
off-diagonal} behavior of the Bergman kernel: 
%------------------------------------------------------------------------------
\begin{proposition}\label{0t3.0}
For any $\ell,m\in\N$ and $\var>0$, there exists 
$C_{\ell,m,\var}>0$ such that for any $p\geqslant 1$, $x,x'\in X$, 
$d(x,x')> \var$, 
\begin{equation}\label{1c3}
\left|P_p(x,x')\right|_{\cC^m(X\times X)} 
\leqslant C_{\ell,\,m,\:\var}\, p^{-l}\,,
\end{equation}
The $\cC ^m$ norm \eqref{1c3} is induced by
$\nabla^L$, $\nabla^E$, $h^L$, $h^E$ and $g^{TX}$.
\end{proposition}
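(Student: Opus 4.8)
The plan is to derive the far off-diagonal decay of the Bergman kernel from the spectral gap in Theorem \ref{specDirac} together with finite-propagation speed for the wave operator associated to $D_p$. First I would write the Bergman projection $P_p$ via a contour integral or, more directly, via the heat operator: since $\spec(D_p^2)\subset\{0\}\cup[2\mu_0p-C,+\infty[$, for a suitable even function $\varphi$ one can represent $P_p=\varphi(D_p)$ modulo an error that is exponentially small in $p$, provided $\widehat{\varphi}$ is supported in a small interval. Concretely, choose an even Schwartz function $\varphi$ with $\varphi(0)=1$, $\widehat{\varphi}$ compactly supported in $]-\var,\var[$, and control $\|(\varphi(D_p)-P_p)\|$ by $\sup_{\lambda^2\geqslant 2\mu_0p-C}|\varphi(\lambda)|$, which decays faster than any power of $p$ because $\varphi$ is Schwartz. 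This reduces the estimate on $P_p(x,x')$ to an estimate on the kernel of $\varphi(D_p)$.

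Next I would invoke finite propagation speed: by the unit-speed finite-propagation property of the wave equation $\partial_t u=\sqrt{-1}D_pu$ (which holds for any first-order self-adjoint operator with principal symbol of unit norm, in particular the \spin Dirac operator with metric $g^{TX}$), the Schwartz kernel of $\cos(tD_p)$ — and hence of $\varphi(D_p)=\tfrac{1}{2\pi}\int\widehat{\varphi}(t)\cos(tD_p)\,dt$ since $\widehat{\varphi}$ is supported in $]-\var,\var[$ — vanishes for $d(x,x')>\var$. Therefore $P_p(x,x')=\big(P_p-\varphi(D_p)\big)(x,x')$ for $d(x,x')>\var$, and the right-hand side is the kernel of an operator whose $L^2$-operator norm is $\cO(p^{-\infty})$. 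To pass from an operator-norm bound to a pointwise $\cC^m$ bound on the kernel, I would combine this with elliptic estimates for $D_p$: applying powers of $D_p$ and using the Sobolev embedding theorem on balls of fixed radius, together with the Bochner–Kodaira–Lichnerowicz formula to absorb the $p$-dependence of the Sobolev norms into at most polynomial factors in $p$, one converts the $\cO(p^{-\infty})$ operator-norm decay into $\cO(p^{-\infty})$ pointwise decay in any $\cC^m$ norm. This is exactly the strategy of \cite[Proposition\,4.1]{DLM04a}, which the statement cites.

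The main obstacle is the last step: carefully tracking the $p$-dependence when converting operator norms to $\cC^m$-kernel norms. One must commute $D_p$ (and the connections $\nabla^L,\nabla^E$) past $\varphi(D_p)-P_p$, estimate commutators $[\nabla_p,D_p]$ whose symbol involves $R^L$ and hence grows linearly in $p$, and check that after $m$ differentiations the accumulated polynomial-in-$p$ loss is still beaten by the super-polynomial gain from $\varphi$ being Schwartz. A clean way to organize this is to note that $D_p^k(\varphi(D_p)-P_p)=\psi_k(D_p)$ for $\psi_k(\lambda)=\lambda^k(\varphi(\lambda)-\mathds{1}_{\{0\}}(\lambda))$, which is again Schwartz with the same support constraint on its Fourier transform, so one never actually loses the decay; the only polynomial factors come from the finitely many fixed geometric commutators and from the Sobolev embedding constant, both independent of $p$ after rescaling. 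Thus, since all estimates reduce to results already available in \cite[Proposition\,4.1]{DLM04a} and \cite{MM07}, the proof is a citation combined with the remarks above, and I would simply state it as such.
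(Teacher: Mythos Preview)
Your proposal is correct and matches the approach of the cited reference \cite[Proposition\,4.1]{DLM04a}; the paper itself gives no proof and simply cites that result. Your sketch (spectral gap $\Rightarrow$ $P_p=\varphi(D_p)+\cO(p^{-\infty})$ with $\widehat{\varphi}$ compactly supported, then finite propagation speed for $\cos(tD_p)$, then Sobolev estimates to pass to $\cC^m$-kernel bounds) is exactly the argument in \cite{DLM04a} and \cite[\S4.1]{MM07}, and you correctly conclude that the proof reduces to a citation.
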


Let $\pi : TX\times_{X} TX \to X$ be the natural projection from 
the fiberwise product of $TX$ on $X$.
  Let $\nabla ^{\End (\bE)}$ be the connection on
$\End (\Lambda (T^{*(0,1)}X)\otimes E)$ induced by
$\nabla ^{\text{Cliff}}$ and $\nabla ^E$.

Let us elaborate on the identifications we use in the sequel.
Let $x_0\in X$ be fixed and consider 
the diffeomorphism $B^{T_{x_0}X}(0,4\varepsilon)\ni Z 
\mapsto \exp^X_{x_0}(Z)\in B^{X}(x_0,4\varepsilon)$
for $\var\leqslant a ^X/4$.
We denote the pull-back of the vector
bundles $L$, $E$ and $L^p\otimes\bE$ via this diffeomorphism 
by the same symbols. 
%---------------------------------------------------------      
\\[3pt]
\textbf{(i)} There exist trivializations of $L$, $E$ and 
$L^p\otimes\bE$ over 
$B^{T_{\!x_0}X} (0,4\varepsilon)$ given by unit frames 
which are parallel with respect to $\nabla ^L$, $\nabla^E$ and
$\nabla^{L^p\otimes\bE}$ along the curves 
$\gamma_Z:[0,1]\to B^{T_{\!x_0}X}(0,4\varepsilon)$ defined 
for every $Z\in B^{T_{\!x_0}X}(0,4\varepsilon)$ by
$\gamma_Z(u)=\exp^X_{x_0} (uZ)$.
%---------------------------------------------------------------------  
\\[3pt]
\textbf{(ii)} With the previous trivializations, 
$P_p(x,x')$ induces a smooth section 
\[B^{T_{\!x_0}X}(0,4\var)\ni Z,Z'\mapsto P_{p,\,x_0}(Z,Z')\]
of $\pi^*(\End(\bE))$ over $TX\times_{X} TX$, which depends 
smoothly on $x_0$.
%---------------------------------------------------------------------- 
\\[3pt]
\textbf{(iii)} $\nabla ^{\End (\bE)}$ induces naturally 
a $\cC^m$-norm with respect to the parameter $x_0\in X$.
%-------------------------------------------------------------------    
\\[3pt]
\textbf{(iv)} 
By \eqref{to1.2}, $\bJ$ is an element of\/ $\End(T^{(1,0)}X)$.  
Consequently, we can diagonalize $\bJ_{x_0}$\/, i.e., choose 
an orthonormal basis $\{w_j\}_{j=1}^n$ of $T^{(1,0)}_{x_0}X$ 
such that 
\begin{equation} \label{3ue60}
\bJ_{x_0}\om_j =\frac{\sqrt{-1}}{2\pi} a_j(x_0) w_j\,, 
\quad\text{for all $j=1,2,\ldots,n$}\,,
\end{equation}
where $0<a_1(x_0)\leqslant a_2(x_0)
\leqslant\dotsc\leqslant a_n(x_0)$. %Let
%------------------------------------
%\begin{equation} \label{3ue601}
%\mu =\inf_{x_0\in X} a_1(x_0) >0\,.
%\end{equation}
%-------------------------------------
The vectors $\{e_{j}\}_{j=1}^{2n}$ defined by
\begin{equation}\label{frame1}
e_{2j-1}=\tfrac{1}{\sqrt{2}}(w_j+\overline{w}_j)
\quad\text{and}\quad
e_{2j}=\tfrac{\sqrt{-1}}{\sqrt{2}}(w_j-\overline{w}_j)\,, 
\quad j=1,\dotsc,n\,,
\end{equation}
form an orthonormal frame of $T_{\!x_0}X$.
The diffeomorphism 
\begin{equation}\label{alm4.22}
\R^{2n}\ni(Z_1,\dotsc, Z_{2n}) \longmapsto \sum_i
Z_i e_i\in T_{x_0}X
\end{equation}
induces coordinates on $T_{x_0}X$, which we use throughout 
the paper. In these coordinates we have 
$e_j={\partial}/{\partial Z_j}$\,.
%
%
%
%Let us choose an orthonormal basis $\{ w_i\}_{i=1}^n$ of 
%$T^{(1,0)}_{x_0} X$. The diffeomorphism 
%\begin{equation}\label{alm4.22}
%\R^{2n}\ni(Z_1,\dotsc, Z_{2n}) \longmapsto \sum_i
%Z_i e_i\in T_{\!x_0}X
%\end{equation}
%induces coordinates on $T_{\!x_0}X$, which we use throughout 
%the paper. In these coordinates we have
%$e_j={\partial}/{\partial Z_j}$\,.
The complex coordinates $z=(z_1,\ldots,z_n)$ on $T_{\!x_0}X$ 
are given by $z_j=Z_{2j-1}+\imat Z_{2j}$, $j=1,\dotsc,n$.
%-------------------------------------------------------------------    
\\[3pt]
\textbf{(v)} If $dv_{TX}$ is the Riemannian volume form 
on $(T_{\!x_0}X, g^{T_{\!x_0}X})$, there exists 
a smooth positive function $\kappa_{x_0}:T_{\!x_0}X\to\R$, 
$Z\mapsto\kappa_{x_0}(Z)$ defined by 
\begin{equation} \label{atoe2.7}
dv_X(Z)= \kappa_{x_0}(Z)\, dv_{TX}(Z),\quad \kappa_{x_0}(0)=1,
\end{equation}
where the subscript $x_0$ of  
$\kappa_{x_0}(Z)$ indicates the base point $x_0\in X$. 
By \cite[(4.1.101)]{MM07} we have
\begin{equation} \label{atoe2.8}
\kappa_{x_0}(Z)=1+O(|Z|^2)\,.
\end{equation}
\noindent
%Let us set %For simplicity we denote  
%%--------------------------------------
%\[
%\chi(Z,Z'):=\kappa_{x_0}^{-\frac{1}{2}}(Z)
%\kappa_{x_0}^{-\frac{1}{2}}(Z')\,. 
%\] 
%------------------------------------------------------------------     
%\\[3pt]
\textbf{(vi)} 
Let $\Theta_p: L^2(X,L^p\otimes\bE)\longrightarrow 
L^2(X,L^p\otimes\bE)$ 
be a sequence of continuous linear  operators with smooth kernel 
$\Theta_p(\cdot,\cdot)$ with respect to $dv_X$ 
(e.g., $\Theta_p=T_{f,p}$).
% Let $\pi : TX\times_{X} TX \to X$ be the natural projection 
%from the fiberwise product of $TX$ on $X$.
In terms of our basic trivialization, $\Theta_p(x,y)$ induces 
a family of smooth sections
$Z,Z'\mapsto \Theta_{p,\,x_0}(Z,Z^\prime)$ of 
$\pi^*\End(\bE)$ over $\{(Z,Z')\in TX\times_{X} TX:|Z|,|Z'|<\var\}$, 
which depends smoothly on $x_0$. 

We denote by $\abs{\Theta_{p,\,x_0}(Z,Z^\prime)}_{\cC^m(X)}$ 
the $\cC^{m}$ norm with respect to the parameter $x_0\in X$. 
We say that
\[
\Theta_{p,\,x_0}(Z,Z^\prime)=\mO(p^{-\infty})\,,\quad p\to\infty,
\]
 if
for any $\ell,m\in \N$, there exists $C_{\ell,\,m}>0$ such that 
$\abs{\Theta_{p,\,x_0}(Z,Z^\prime)}_{\cC^{m}(X)}
\leqslant C_{\ell,\,m}\, p^{-\ell}$.

We denote by $\det_{\C}$ for the determinant function on the 
complex bundle $T^{(1,0)}X$ and set 
$|\bJ_{x_0}|=(-\bJ^2_{x_0})^{1/2}$. Note that
\begin{equation}\label{toe1.3a}
    {\det}_{\C} |\bJ_{x_0}|=\prod_{i=1}^n\frac{a_i(x_0)}{2\pi}\,,
\end{equation}
where $a_i(x_0)$ were defined in \eqref{3ue60}.
Let
\begin{equation}\label{toe1.3}
\begin{split}
\cP_{x_0}(Z,Z')&={\det}_{\C}|\bJ_{x_0}|\,
\exp\Big (- \frac{\pi}{2} 
\big\langle |\bJ_{x_0}|(Z-Z'),(Z-Z') \big\rangle
-\pi \sqrt{-1} \big\langle \bJ_{x_0} Z,Z' \big\rangle\Big )\\
&=\frac{1}{(2\pi)^n}\prod_{i=1}^n
a_i(x_0)\:\:\exp\Big(-\frac{1}{4}\sum_i
a_i(x_0)\big(|z_i|^2+|z^{\prime}_i|^2 -2z_i\overline{z}_i'\big)\Big)\,.
\end{split}
\end{equation}
%-------------------------------------
We recall that $\cP_{x_0}(Z,Z')$ is actually the kernel of the 
orthogonal projection $\cP_{x_0}$
from $L^2(\R^{2n})$ onto the Bargmann-Fock space, 
see \cite[\S2]{MM08b}. Especially, $\cP_{x_0}^2=\cP_{x_0}$ 
so that
\begin{equation}\label{toe1.4}
\cP_{x_0}(Z,Z')=\int_{\R^{2n}}\cP_{x_0}(Z,Z'')
\cP_{x_0}(Z'',Z')\,dv(Z'')\,.
\end{equation}
Fix $k\in\N$ and $\var^\prime\in ]0,a^X[$\,. 
Let 
$\big\{Q_{r,\,x_0}\in \End(\bE)_{x_0}[Z,Z^{\prime}]:
0\leqslant r\leqslant k,x_0\in X\big\}$
be a family
of polynomials in $Z,Z^ \prime$, which is smooth with respect to
the parameter $x_0\in X$. We say that
\begin{equation} \label{toe2.7}
p^{-n} \Theta_{p,x_0}(Z,Z^\prime)\cong \sum_{r=0}^k
(Q_{r,\,x_0} \cP_{x_0})(\sqrt{p}Z,\sqrt{p}Z^{\prime})p^{-r/2}
+\mO(p^{-(k+1)/2})\,,
\end{equation}
on $\{(Z,Z^\prime)\in TX\times_X TX:\abs{Z},\abs{Z^{\prime}}
<\var^\prime\}$ 
if there exist a decomposition
\begin{equation} \label{toe2.71}
\begin{split}
p^{-n} \Theta_{p,x_0}(Z,Z^\prime)\kappa^{1/2}_{x_0}(Z)
\kappa^{1/2}_{x_0}(Z')-\sum_{r=0}^k
(Q_{r,\,x_0} \cP_{x_0})(\sqrt{p}Z,\sqrt{p}Z^{\prime})p^{-r/2}\\
=R_{p,k,x_0}(Z,Z^\prime)+\mO(p^{-\infty})\,,
\end{split}
\end{equation}
where $R_{p,k,x_0}$ satisfies the following estimate on 
$\big\{(Z,Z^\prime)\in TX\times_X TX:\abs{Z},\abs{Z^{\prime}}
<\var^\prime\big\}$:
for every $m,m'\in\N$ there exist $C_{k,\,m,\,m'}>0$, $M>0$ 
such that for all $p\in\N^{*}$
\begin{equation}
\begin{split}
\label{toe2.711}
\sup_{|\alpha|+|\alpha'|\leqslant m'}
\Biggl|\frac{\partial^{\alpha+\alpha'}}{\partial Z^\alpha\partial 
Z'^{\alpha'}}&R_{p,k,x_0}(Z,Z^\prime)\Biggr|_{\cC^m(X)}\\ \\
\leqslant  
&\,C_{k,\,m,\,m'}\,p^{(m'-k-1)/2}
(1+\sqrt{p}\,|Z|+\sqrt{p}\,|Z^{\prime}|)^M \,
e^{-C_0\,\sqrt{ p}\,|Z-Z^{\prime}|}\,.
\end{split}
\end{equation}
We consider the orthogonal projection:
\begin{equation}\label{2c3}
I_{\C\otimes E}: \bE=\Lambda^\bullet (T^{*(0,1)}X)\otimes E
\longrightarrow \C\otimes E\,.
\end{equation}
%-------------------------
By \cite[Theorem 4.18$^\prime$]{DLM04a} we have 
the following {\em off diagonal expansion} of the Bergman kernel:
%%%%%%%%%%%%%%%%%%%%%%%%%%%%%%%%%%%%
\begin{theorem} \label{tue17}
Let $\varepsilon\in\,]0, a^X/4[$. There exist a smooth family 
relative to the parameter $x_0\in X$ 
\[
\Big\{J_{r,\,x_0}(Z,Z^{\prime})\in
\End (\bE_{x_0})[Z,Z^{\prime}]:r\in\N,\,x_0\in X\Big\}\,,
\quad \deg J_{r,\,x_0}\leqslant 3r\,,
\]
of polynomials $J_{r,\,x_0}$ having the same parity as $r$, and
whose coefficients
are polynomials in $R^{TX}$, $R^{T^{(1,0)}X}$,
$R^E$ {\rm (}and $R^L${\rm )}  and their derivatives of order
$\leqslant r-1$  {\rm (}resp\,.\ $\leqslant r${\rm )} such that 
\begin{equation} \label{toe1.9}
p^{-n} P_{p,\,x_0}(Z,Z^\prime)\cong \sum_{r=0}^k
(J_{r,\,x_0} \cP_{x_0})\,(\sqrt{p}Z,\sqrt{p}Z^{\prime})
p^{-\frac{r}{2}}
+\mO(p^{-\frac{k+1}{2}})\,,
\end{equation}
on the set $\big\{(Z,Z^\prime)\in TX\times_X TX:\abs{Z},
\abs{Z^{\prime}}<2\var\big\}$. Moreover 
$J_{0,\,x_0}(Z,Z^{\prime})=I_{\C\otimes E}$\,.
\end{theorem}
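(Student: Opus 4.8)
The plan is to follow the analytic localization and rescaling method of Dai--Liu--Ma, in three steps.

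\textbf{Step 1 (Localization).} First I would use the spectral gap of Theorem \ref{specDirac}, namely $\spec(D_p^2)\subset\{0\}\cup[2\mu_0 p-C,+\infty[$, to reduce to a problem on a small ball. Representing the Bergman projection as a Cauchy integral $P_p=\frac{1}{2\pi\imat}\oint_\delta(\lambda-D_p^2)^{-1}\,d\lambda$ over a small circle $\delta$ enclosing $0$ and no other point of the spectrum, and combining this with the finite propagation speed of the wave operator $\cos(uD_p)$ applied to a suitable cutoff of $\frac{1}{2\pi\imat}\oint_\delta e^{-u^2\lambda}(\lambda-D_p^2)^{-1}\,d\lambda$, one shows that $P_p(x,x')$ modulo $\mathcal{O}(p^{-\infty})$ is determined by the geometry on $B^X(x_0,\var)$; this simultaneously reproves Proposition \ref{0t3.0}. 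Hence it suffices to analyze $P_{p,x_0}(Z,Z')$ for $|Z|,|Z'|<2\var$, in the parallel trivialization and normal coordinates (i)--(iv).

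\textbf{Step 2 (Rescaling and the model operator).} Introduce the rescaling $(S_ps)(Z)=s(Z/\sqrt p)$ on sections over a fixed ball of $(T_{x_0}X,g^{T_{x_0}X})\cong\R^{2n}$, and set $\mL_p=S_p^{-1}\big(\tfrac1pD_p^2\big)S_p$. Using the Lichnerowicz formula for $D_p^2$ and Taylor expanding the metric, the connection coefficients of $\nabla_p$, the volume factor $\kappa_{x_0}$ and the curvatures at $x_0$, one obtains, with $t=1/\sqrt p$, an expansion
\[
\mL_p=\mL_0+\sum_{i=1}^{2k+2}t^{\,i}\,\mathcal{O}_i+\mathcal{O}(t^{2k+3}),
\]
where $\mL_0$ is the model harmonic oscillator on $\R^{2n}$ attached to the constant curvature $\bJ_{x_0}$ (equivalently to the numbers $a_j(x_0)$ of \eqref{3ue60}), and each $\mathcal{O}_i$ is a differential operator of order $\leqslant 2$ whose coefficients are polynomials in $Z$ of degree $\leqslant i+2$, built from $R^{TX}$, $R^{T^{(1,0)}X}$, $R^E$ (and $R^L$) and their derivatives at $x_0$ of order $\leqslant i-1$ (resp.\ $\leqslant i$). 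The orthogonal projection onto $\Ker\mL_0$ in $L^2(\R^{2n},\bE_{x_0})$ has kernel $I_{\C\otimes E}\otimes\cP_{x_0}(Z,Z')$; in particular this forces $J_{0,x_0}=I_{\C\otimes E}$ and places the leading term in degree zero.

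\textbf{Step 3 (Resolvent expansion and estimates).} Transporting the Cauchy integral to the rescaled picture gives $S_p^{-1}P_pS_p=\frac{1}{2\pi\imat}\oint_\delta(\lambda-\mL_p)^{-1}\,d\lambda$. Expanding $(\lambda-\mL_p)^{-1}=(\lambda-\mL_0)^{-1}+\sum_{i\geqslant1}t^{\,i}\,\mathcal{C}_i(\lambda)+\cdots$, with $\mathcal{C}_i(\lambda)$ defined recursively from $(\lambda-\mL_0)^{-1}$ and the $\mathcal{O}_j$, and integrating term by term produces the candidate coefficients $J_{r,x_0}$; the factors $\kappa_{x_0}^{1/2}(Z)\kappa_{x_0}^{1/2}(Z')$ in \eqref{toe2.71} absorb the discrepancy between $dv_X$ and $dv_{TX}$. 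The essential analytic input is a family of uniform (in $p$ and in $x_0\in X$) estimates for $(\lambda-\mL_p)^{-1}$ on the weighted Sobolev spaces of $\R^{2n}$, which follow from the ellipticity of $\mL_p$ and its uniform positivity on the orthogonal complement of its near-kernel --- again a consequence of the spectral gap. Feeding these into the contour integral, one bounds the remainder as in \eqref{toe2.711}: the polynomial weight $(1+\sqrt p\,|Z|+\sqrt p\,|Z'|)^M$ comes from the degree growth caused by the $\mathcal{O}_i$, and the Gaussian factor $e^{-C_0\sqrt p\,|Z-Z'|}$ from the Agmon-type off-diagonal decay of $(\lambda-\mL_0)^{-1}$ inherited from $\cP_{x_0}$. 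Finally $\deg J_{r,x_0}\leqslant 3r$ and the parity statement follow by induction on $r$: $\mL_0$ and $\cP_{x_0}$ are even, each $\mathcal{O}_i$ changes parity by $i$ and raises polynomial degree by at most $i+2\leqslant 3i$ for $i\geqslant1$, and tracking which curvature derivatives enter at each stage gives the asserted form of the coefficients.

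The step I expect to be the main obstacle is the uniform resolvent estimate of Step 3 --- proving that $\mL_p$ is invertible off its near-kernel with weighted-Sobolev norms bounded independently of $p$ and of the base point $x_0$; once this is in place, the expansion \eqref{toe1.9} and the bound \eqref{toe2.711} reduce to bookkeeping of a formal power series.
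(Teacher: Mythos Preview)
Your outline is a faithful sketch of the Dai--Liu--Ma localization/rescaling scheme, and it is essentially correct as a proof of the off-diagonal expansion. However, note that the paper itself does \emph{not} prove Theorem~\ref{tue17}: it is stated with the attribution ``By \cite[Theorem 4.18$^\prime$]{DLM04a}'' and is simply quoted as a known result, to be used as input for the subsequent analysis of Toeplitz operators with $\cC^k$ symbols. So there is no ``paper's own proof'' to compare against; your Steps 1--3 are, in outline, precisely what the cited reference (and \cite[Ch.\,4]{MM07}) does.

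A couple of minor comments on accuracy. In Step~1, the Cauchy-integral representation of $P_p$ is not what is used for localization; rather one writes $P_p=F(D_p)$ with $F$ a suitable even Schwartz function equal to $1$ at $0$ and vanishing on $\spec(D_p^2)\setminus\{0\}$, and applies finite propagation speed to $\cos(uD_p)$ via the Fourier inversion of $F$. The contour integral enters only after rescaling, in Step~3. In Step~3 the phrase ``Agmon-type off-diagonal decay of $(\lambda-\mL_0)^{-1}$'' is slightly misleading: the exponential decay $e^{-C_0\sqrt p\,|Z-Z'|}$ in \eqref{toe2.711} is obtained by proving weighted-Sobolev estimates for $(\lambda-\mL_p)^{-1}$ with exponential weights $e^{\pm\rho d(Z,\cdot)}$ directly, not by invoking $\cP_{x_0}$. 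These are cosmetic issues; the architecture of your plan matches the original argument, and the step you flag as the main obstacle (uniform resolvent bounds in weighted Sobolev norms, independent of $p$ and $x_0$) is indeed where the analytic work lies.
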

%--------------------------------------
By taking $Z=Z'=0$ in \eqref{toe1.9} we obtain the diagonal 
expansion of the Bergman kernel. Namely, 
for any $k\in \N$, $P_p(x,x)$ has an 
expansion in the $\cC^\infty$-topology 
\begin{equation}\label{toe1.9b}
    P_p(x,x)=\sum_{r=0}^k \bb_r(x)p^{n-r} + \cO(p^{n-k-1})\,,
\quad \bb_r\in\cC^\infty(X,\End(\bE))\,,
\end{equation}
and by Theorem \ref{tue17} and (\ref{toe1.3}), we get
\begin{equation}\label{toe2.712}
\bb_0(x_0)={\det}_{\C}|\bJ_{x_0}|\,I_{\C\otimes E}\in
\End(\bE_{x_0})\,.
\end{equation}
%where $\bb_0$ is given by \eqref{toe2.712}.
Let us remark that if $g^{TX}(u,v)=\om(u,Jv)$ for $u,v\in TX$, 
then $a_i=2\pi$, so %$J_{0,\,x_0}(Z,Z^{\prime})
$\bb_0(x)=I_{\C\otimes E}$. % in this case.

%%%%%%%%%%%%%%%%%%%%%%%%%%%%%%%%%%
\section{Expansion of the kernels and traces of Toeplitz operators}
\label{s2}
%%%%%%%%%%%%%%%%%%%%%%%%%%%%%%%%%%%
For a smooth symbol $f\in\cC^{\infty}(X,\End(E))$ we know 
by \cite[Lemma\,4.6]{MM08b}, \cite[Lemma 7.2.4]{MM07} that 
the kernel of the associated 
Toeplitz operator $T_{f,p}$ as in \eqref{bk2.4} has for any 
$\ell\in\N$ an expansion on the diagonal 
in the $\cC^\infty$-topology,
\begin{equation}\label{toe2.1}
T_{f,p}(x,x)=\sum _{r=0}^\ell \bb_{r,f}(x)p^{n-r}
+\cO(p^{n-\ell-1})\,,\quad\bb_{r,f}\in\cC^{\infty}(X,\End(\bE))\,,
\end{equation}
where
\begin{equation}\label{toe2.2}
\bb_{0,f}(x)=\bb_0(x)f(x)\,.
\end{equation}
Note as an aside, that the coefficients $\bb_{r,f}$, $r=0,1,2$, 
were calculated in \cite[Theorem\,0.1]{MM12}, if $(X,\omega)$
is K\"ahler and the bundles $L$, $E$ are holomorphic.
In \cite[Lemma\,4.6]{MM08b} we actually established 
the off-diagonal expansion of the Toeplitz kernel. 
We wish to study here the asymptotic behavior of the 
Toeplitz kernel for a less regular symbol $f$. 
Let us begin with the analogue of \cite[Lemma\,4.2]{MM08b}.
%--------------------------------------
\begin{lemma} \label{toet2.1}
Let $f\in L^{\infty}(X,\End(E))$.
For every $\varepsilon>0$ and every $\ell,m\in\N$, 
there exists $C_{\ell,m,\varepsilon}>0$ such that 
%-------------------------------------------------------------------------
\begin{equation} \label{toe2.6b}
|T_{f,\,p}(x,x')|_{\cC^m(X\times X)}
\leqslant C_{\ell,m,\varepsilon}p^{-\ell}\,,
\quad\text{for all $p\geqslant 1$, $(x,x')\in X\times X$ 
with $d(x,x')>\varepsilon$},
\end{equation}
%---------------------------------------------------------------------
where the $\cC^m$-norm is induced by $\nabla^L,\nabla^E$ and
$h^L,h^E,g^{TX}$.
Moreover, there exists  $C>0$ such that
for all $p\geqslant 1$ and all $(x,x')\in X\times X$
with $d(x,x')\leqslant\varepsilon$,
%-------------------------------------------------------------------------
\begin{equation} \label{toe2.6c}
|T_{f,\,p}(x,x')|_{\cC^m(X\times X)}\leqslant Cp^{n+\frac{m}2}
e^{-\frac{1}{2}C_{0}\,\sqrt{ p}\, d(x,x')} + O(p^{-\infty}).
%\quad\text{for all $p\geqslant 1$ and all $(x,x')\in X\times X$
%with $d(x,x')\leqslant\varepsilon$}.
\end{equation}
%---------------------------------------------------------------------
\end{lemma}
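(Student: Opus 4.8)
The plan is to reduce the $L^\infty$-symbol case to the already-established estimates for the Bergman kernel by writing $T_{f,p}(x,x')$ via the formula \eqref{toe2.5} and inserting the known off-diagonal behavior of $P_p$. Concretely, I would start from
\[
T_{f,p}(x,x')=\int_X P_p(x,x'')\,f(x'')\,P_p(x'',x')\,dv_X(x'')
\]
and split the integration region $X=X_1\cup X_2\cup X_3$ according to whether $x''$ is far from $x$, far from $x'$, or close to both. Since $d(x,x')>\varepsilon$ in the first regime, for any $x''$ either $d(x,x'')>\varepsilon/2$ or $d(x'',x')>\varepsilon/2$; using Proposition \ref{0t3.0} to bound the corresponding Bergman factor by $C_{\ell,m,\varepsilon}p^{-\ell}$ (for arbitrary $\ell$), together with the trivial bound $|P_p(x'',\cdot)|\leqslant C p^n$ on the other factor (from \eqref{toe1.9b}) and $\|f\|_\infty<\infty$, gives \eqref{toe2.6b}. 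Differentiation in $x,x'$ only differentiates the $P_p$ factors, which is exactly what Proposition \ref{0t3.0} controls in $\cC^m$, so the same argument upgrades to the $\cC^m(X\times X)$-norm.

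For \eqref{toe2.6c} I would use the near-diagonal Gaussian decay of the Bergman kernel. Fix $x=x_0$; for $d(x,x')\leqslant\varepsilon$ write $x'=\exp^X_{x_0}(Z')$ and use Theorem \ref{tue17} (with $k=0$, say), which yields
\[
|P_{p,x_0}(Z,Z')|\leqslant C\,p^{n}\,(1+\sqrt p\,|Z|+\sqrt p\,|Z'|)^{M}\,
e^{-C_0\sqrt p\,|Z-Z'|}+\mO(p^{-\infty}).
\]
Insert this (in the two Bergman factors $P_p(x,x'')$ and $P_p(x'',x')$) into \eqref{toe2.5}, bound $|f(x'')|\le\|f\|_\infty$, and estimate the resulting integral over $x''$. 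The triangle inequality $|Z-Z''|+|Z''-Z'|\geqslant |Z-Z'|$ lets us extract the factor $e^{-\frac12 C_0\sqrt p\,d(x,x')}$, after which what remains is a Gaussian integral of the form $\int_{\R^{2n}} e^{-\frac12 C_0\sqrt p(|Z-Z''|+|Z''-Z'|)}(\ldots)^{M}dv(Z'')$ whose value is $O(p^{-n})$, so the two factors of $p^n$ collapse to a single $p^n$; the polynomial weights contribute at most a fixed power of $p^{1/2}$, but since here the estimate is only claimed for $d(x,x')\leqslant\varepsilon$ (hence $\sqrt p|Z|,\sqrt p|Z'|$ are not small), one absorbs the weight by shrinking the exponential constant, which is where the $C_0/2$ (rather than $C_0$) in \eqref{toe2.6c} comes from. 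For the $\cC^m(X\times X)$ version, differentiating in $x,x'$ brings down at most $m$ factors of $\sqrt p$ from the Gaussian (this is the origin of the $p^{m/2}$ in \eqref{toe2.6c}), and one again absorbs the extra polynomial growth into the exponential.

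The main obstacle is bookkeeping the dependence on the various parameters — the trivializations of $L^p\otimes\bE$ along geodesics (item (i) above) are only defined in a ball of radius $4\varepsilon$, so to make sense of $P_{p,x_0}(Z,Z')$ globally one must patch, and one must check that the Gaussian-integral estimate is uniform in $x_0\in X$ and that differentiation with respect to the base point $x_0$ (via $\nabla^{\End(\bE)}$, item (iii)) does not introduce uncontrolled terms. This is precisely the kind of argument carried out for smooth symbols in \cite[Lemma 4.2]{MM08b}; the only new point is that $f$ is merely bounded, which is harmless because $f$ enters solely through the pointwise factor $f(x'')$ under the integral and is never differentiated. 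Hence the proof is a direct adaptation, replacing every occurrence of $\|f\|_{\cC^k}$ or $\|f\|_{\cC^0}$ in the smooth argument by $\|f\|_{L^\infty}$.
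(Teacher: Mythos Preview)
Your proposal is correct and follows essentially the same approach as the paper: both arguments insert the integral representation \eqref{toe2.5} and use the far off-diagonal decay (Proposition~\ref{0t3.0}) together with the uniform $\cC^m$-bound $|P_p|_{\cC^m}\leqslant Cp^{n+m/2}$ for \eqref{toe2.6b}, and the near-diagonal Gaussian estimate from Theorem~\ref{tue17} plus the triangle inequality for \eqref{toe2.6c}. The paper's proof is simply terser; one simplification worth noting is that the paper takes the base point $x_0=x$ (so $Z=0$), which makes the polynomial weight in \eqref{toe2.6e} become $(1+\sqrt{p}\,|Z''|)^M$ and is then directly absorbed by the factor $e^{-C_0\sqrt{p}\,|Z''|}$ coming from $P_{p,x_0}(0,Z'')$, cleaning up the bookkeeping you allude to at the end.
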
 
%----------------------------------------------------------------------------
\begin{proof}
Due to \eqref{toe2.5} and \eqref{1c3},  \eqref{toe2.6b} holds 
if we replace $T_{f,\,p}$ by $P_p$\,. 
Moreover, from \eqref{toe1.9}, for any $m\in \N$, 
there exists $C_m>0$ such that 
\begin{equation}\label{toe2.6d}
|P_p(x,x')|_{\cC^m(X\times X)}<Cp^{n+\frac{m}{2}}\,,
\quad \text{for all $(x,x')\in X\times X$}.
\end{equation}
These two facts and formula \eqref{toe2.5} imply \eqref{toe2.6b}.
By Theorem \ref{tue17}, for $Z,Z'\in T_{x_0}X$, $|Z|,|Z'|<\varepsilon$ we have
\begin{equation}\label{toe2.6e}
\begin{split}
\sup_{|\alpha|+|\alpha'|\leqslant m}
\Biggl|\frac{\partial^{\alpha+\alpha'}}{\partial Z^\alpha\partial Z'^{\alpha'}}
&P_{p,x_0}(Z,Z^\prime)\Biggr|
  \\ \\
&\leqslant\,C\,p^{n+\frac{m}2}
(1+\sqrt{p}\,|Z|+\sqrt{p}\,|Z^{\prime}|)^M \,
e^{-C_0\,\sqrt{ p}\,|Z-Z^{\prime}|}+O(p^{-\infty})\,.
\end{split}
\end{equation}
Using \eqref{toe2.5} and \eqref{toe2.6e} by taking $x_0=x$,
we get \eqref{toe2.6c}.
\end{proof}
%---------------------------
In order to formulate our results for a family of functions 
(observables) we consider equicontinuous and uniformly bounded
families. The results are of course valid for individual functions, too. 
%---------------------------
\begin{definition} \label{toed2.2}
    We denote  by $\nabla^{E}$ the connection on 
    $(T^*X) ^{\otimes k}\otimes \End(E)$ induced by 
    the Levi-Civita connection $\nabla ^{TX}$ and $\nabla ^E$.
Let $\cA^0 \subset \cC^0(X, \End(E))$ 
be a subset which is equicontinuous on $X$,
and $\cA^1 \subset \cC^1(X, \End(E))$ 
be a subset such that $\nabla^{E}\cA^1\subset \cC^0(X, \End(E))$
is uniformly bounded. Let $\cA^2 \subset \cC^2(X, \End(E))$ 
be a subset such that $\nabla^{E}\cA^2\subset \cC^0(X, \End(E))$
is uniformly bounded, and 
$\nabla^{E}\nabla^{E}\cA^2\subset \cC^0(X, \End(E))$
is equicontinuous on $X$.
Let $\cA^k _{\infty}$ be a subset of $\cA^k$ (for $k=0,1,2$)
which is uniformly bounded.
 \comment{For $k\in \N$, let $\cA^k \subset \cC^k(X, \End(E))$ 
 be a subset of $\cC^k(X, \End(E))$ such that 
 $\underbrace{\nabla^{E}\cdots \nabla^{E}}_{k-\text{times}}\cA^k
 \subset \cC^0(X, \End(E))$ is uniformly continuous on $X$. }
\end{definition}    
%---------------------------
\begin{theorem} \label{toet2.2}
Let $(X,\omega)$ be a compact symplectic manifold, 
$(L,h^L,\nabla^L)\to X$ a prequantum line bundle, 
$(E,h^E,\nabla^E)\to X$ be an auxiliary vector bundle.
%Let $f\in L^{\infty}(X,\End(E))$ and $T_{f,p}=P_pfP_p$ 
%be the Toeplitz operator as in \eqref{bk2.4}\,. 
We have as $p\to\infty$
\begin{align}
&p^{-n}T_{f,p}(x,x)=f(x)\bb_{0}(x)+o(1) (\|f\|_{\cC ^0}+1)\,,\:\:
\text{uniformly for } f\in \cA^0, x\in X
%\text{for $f\in\cC^0(X,\End(E))$}
\,,\label{toe2.7a}\\
&p^{-n}T_{f,p}(x,x)=f(x)\bb_{0}(x)+O(p^{-1/2})(\|f\|_{\cC ^0}+1)\,,
\:\:\text{uniformly for } f\in \cA^1, x\in X
%\text{for $f\in\cC^1(X,\End(E))$}
\,,\label{toe2.7b}\\
&p^{-n}T_{f,p}(x,x)=f(x)\bb_{0}(x)+{\bb}_{1,f}(x)p^{-1}
+o(p^{-1})(\|f\|_{\cC ^0}+1)\,, \label{toe2.7c}\\
&\hspace{40mm}\quad\text{uniformly for } f\in \cA^2, x\in X.
\nonumber
%\text{for $f\in\cC^2(X,\End(E))$}
\end{align}
%uniformly in $x\in X$.
In particular, the remainders $o(1)$, $O(p^{-1/2})$, $o(p^{-1})$ 
do not depend on $f$.
\end{theorem}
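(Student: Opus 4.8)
The plan is to reduce the diagonal kernel $p^{-n}T_{f,p}(x,x)$ to a local computation near $x$ via the localization estimates of Lemma~\ref{toet2.1}, and then to expand the Bergman kernel factors using Theorem~\ref{tue17}, tracking the $f$-dependence of the error terms carefully so that it enters only through $\|f\|_{\cC^0}$ (and, for $\cA^1$, $\cA^2$, through the uniformly bounded quantities $\nabla^E f$ and $\nabla^E\nabla^E f$, which under the hypotheses are themselves controlled). First I would fix $x=x_0$ and $\varepsilon\in\,]0,a^X/4[$ and write, using \eqref{toe2.5} and the far off-diagonal decay \eqref{toe2.6b} for $P_p$,
\begin{equation*}
p^{-n}T_{f,p}(x_0,x_0)=p^{-n}\int_{B(0,\varepsilon)}P_{p,x_0}(0,Z)f(\exp^X_{x_0}Z)P_{p,x_0}(Z,0)\,\kappa_{x_0}(Z)\,dv_{TX}(Z)+\cO(p^{-\infty})(\|f\|_{\cC^0}+1)\,,
\end{equation*}
where the $\cO(p^{-\infty})$ error is uniform because it comes only from the kernel of $P_p$, and $f$ contributes merely its sup-norm. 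Now substitute $Z\mapsto Z/\sqrt p$ and use \eqref{toe2.6e}: the rescaled kernels $p^{-n}P_{p,x_0}(0,Z/\sqrt p)$ are, up to $\cO(p^{-\infty})$, bounded by $C(1+|Z|)^M e^{-C_0|Z|}$ uniformly in $x_0$, so the integral localizes to $|Z|\lesssim\sqrt p\,\varepsilon$ and in fact, by the Gaussian decay, to $|Z|=\cO(1)$, with a tail estimate of size $\cO(p^{-\infty})(\|f\|_{\cC^0}+1)$.

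For $\cA^0$: on the localized region $|Z|=\cO(1)$ write $f(\exp^X_{x_0}(Z/\sqrt p))=f(x_0)+\big(f(\exp^X_{x_0}(Z/\sqrt p))-f(x_0)\big)$. The first term, after using \eqref{toe1.4} (the reproducing property $\cP_{x_0}^2=\cP_{x_0}$) and Theorem~\ref{tue17} with $J_{0,x_0}=I_{\C\otimes E}$, produces $f(x_0)\bb_0(x_0)+\cO(p^{-1/2})$; the second term is controlled by the modulus of continuity of the equicontinuous family $\cA^0$ evaluated at scale $|Z|/\sqrt p$, integrated against the Gaussian weight, giving $o(1)$ uniformly in $f\in\cA^0$ and $x_0\in X$. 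That yields \eqref{toe2.7a}. For $\cA^1$: since $\nabla^E f$ is uniformly bounded on the (compact) manifold, $f$ is Lipschitz with a uniform constant, so $|f(\exp^X_{x_0}(Z/\sqrt p))-f(x_0)|\leqslant C|Z|/\sqrt p$, which upon integration against the Gaussian gives the $O(p^{-1/2})$ term; combined with the $\cO(p^{-1/2})$ from the $J_{1,x_0}$-term of Theorem~\ref{tue17}, this gives \eqref{toe2.7b}. For $\cA^2$: Taylor expand $f$ to second order, $f(\exp^X_{x_0}(Z/\sqrt p))=f(x_0)+\tfrac{1}{\sqrt p}(\nabla^E_Z f)(x_0)+\tfrac{1}{2p}(\nabla^E\nabla^E f)(x_0)(Z,Z)+o(p^{-1})|Z|^2$, where the $o(p^{-1})$ remainder is uniform because $\nabla^E\nabla^E f$ is equicontinuous; then expand the two Bergman factors to order $p^{-1}$ using \eqref{toe1.9} with $k=2$. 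Multiplying out and integrating term-by-term against $\cP_{x_0}$ (the odd-order terms in $Z$ integrate to lower order because $J_r$ has the parity of $r$ and $\cP_{x_0}$ is even), the $p^0$-term gives $f(x_0)\bb_0(x_0)$, the $p^{-1/2}$-terms cancel by the same parity argument, and the $p^{-1}$-coefficient is by definition $\bb_{1,f}(x_0)$—here one uses that the $\cC^\infty$-symbol formula \eqref{toe2.1}–\eqref{toe2.2} identifies $\bb_{1,f}$ as a fixed universal expression linear in $f$, $\nabla^E f$, $\nabla^E\nabla^E f$ and the geometric data, so the same formula makes sense and holds for $f\in\cC^2$. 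The leftover is $o(p^{-1})(\|f\|_{\cC^0}+1)$, giving \eqref{toe2.7c}.

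The main obstacle I anticipate is bookkeeping the uniformity of all the error terms in $f$—specifically, verifying that every place where a $\cC^\infty$-symbol argument in \cite{MM08b} would integrate by parts or invoke smoothness of $f$ can be replaced by an argument that uses only the finitely many derivatives controlled by the hypotheses on $\cA^k$, and that the resulting constants depend on $f$ only through $\|f\|_{\cC^0}$ (the additive ``$+1$'' absorbing the purely geometric contributions). In particular, one must check that the Taylor remainder in the $\cA^2$ case is genuinely $o(p^{-1})$ \emph{uniformly} over the family, which is exactly where the equicontinuity of $\nabla^E\nabla^E\cA^2$ (rather than mere boundedness) is needed, and analogously the equicontinuity of $\cA^0$ for \eqref{toe2.7a}. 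A secondary technical point is justifying the term-by-term integration against the Gaussian kernel $\cP_{x_0}$ after rescaling, which is routine given the polynomial-times-Gaussian bounds in \eqref{toe2.711}, but must be done with the tail estimate uniform in $x_0\in X$; compactness of $X$ and the smooth dependence on $x_0$ in Theorem~\ref{tue17} make this straightforward.
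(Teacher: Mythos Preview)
Your proposal is correct and follows essentially the same approach as the paper: localize via the far off-diagonal decay, split $f=f(x_0)+(f-f(x_0))$, handle the constant part with the Bergman kernel expansion (using parity of the $J_r$ to kill the half-integer terms), and control the remainder by equicontinuity or a Taylor estimate of the appropriate order. The only cosmetic difference is that the paper fixes $\delta>0$ first and chooses $\varepsilon$ from equicontinuity before localizing (so the ``small'' integral is bounded by $C\delta p^n$ directly in the original variables), whereas you localize with a fixed $\varepsilon$, rescale $Z\mapsto Z/\sqrt p$, and then invoke the modulus of continuity at scale $|Z|/\sqrt p$ under a dominated-convergence argument; both give the same uniform $o(1)$.
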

%--------------------------------------
\begin{proof}
We start by proving \eqref{toe2.7a}. 
%Let us trivialize the bundles $L$, $E$
%in normal coordinates over a neighborhood $U$ of $x$.
Recall that we trivialized $L, E$ by a unit frame 
over $B^{T_{\!x}X}(0,4\var)$ which is parallel with respect to 
$\nabla^L, \nabla^E$ along the geodesics starting in $x$.
With this trivialization, the section $f\in\End(E)$ induces 
a  section 
\[B^{T_{\!x}X}(0,4\var)\ni Z\mapsto f_{x}(Z).\]
%of $\pi^*(\End(E))$ over $TX\times_{X} TX$, which depends 
%smoothly on $x$.
We denote by $f(x)\in\End(E)|_U$ the endomorphism obtained 
by parallel transport of $f(x)\in\End(E_{x})$ into the neighboring 
fibers $\End(E_{x'})$, $x'\in U=B(0,4\var)$.

Let $\delta>0$ be given. Since
$\cA ^0$ is uniformly continuous on $X$, 
%$f$ is continuous in $x$, 
there exists $\varepsilon>0$ such that 
$B(x,\varepsilon)\subset U$ and for all $x'\in B(x,\varepsilon)$ 
we have $|f(x')-f(x)|\leqslant\delta$ for any $f\in \cA ^0$. 

By \eqref{toe2.5} and \eqref{toe2.6b} we have
\begin{equation}\label{eq3.11}
\begin{split}
T_{f,\,p}(x,x)
=\int_{B(x,\varepsilon)} P_p(x,x')f(x')P_p(x',x)\,dv_X(x')
+O(p^{-\infty}) \|f\|_{\cC^0}\,.
\end{split}
\end{equation}
We write now $f(x')=f(x)+(f(x')-f(x))\in \End(E_{x'})$ 
and split accordingly the last integral in a sum of two integrals. 
From Theorem \ref{tue17}, the first one is 
\begin{multline}\label{toe2.8a}
\int_{B(x,\varepsilon)} P_p(x,x')f(x)P_p(x',x)\,dv_X(x')\\
= p^{2n} \int_{B(x,\varepsilon)} \sum_{i+j=0} ^1 
(J_{i,x}\cP_{x})(0, \sqrt{p}Z')f(x) 
(J_{j,x}\cP_{x})(\sqrt{p}Z',0) dZ'+ O(p^{n-1}) \|f\|_{\cC^0}.
\end{multline}

Note that $J_{0,x_{0}}=I_{\C\otimes E}$ and 
$J_{1,x_{0}}(0, Z')$ is a polynomial on $Z'$ with odd degree,
thus 
\begin{multline}\label{eq3.12}
     \int_{\R ^{2n}} 
(J_{1,x}\cP_{x})(0, Z')f(x) 
(J_{0,x}\cP_{x})_{x}(Z',0) dZ'\\
= \int_{\R ^{2n}} 
(J_{0,x}\cP_{x})(0, Z')f(x) 
(J_{1,x}\cP_{x})(Z',0) dZ' =0.
    \end{multline}
    From (\ref{toe2.712}), (\ref{toe2.8a}) and (\ref{eq3.12}), 
    we get
\begin{equation}\label{toe3.13}
\begin{split}
\int_{B(x,\varepsilon)}& P_p(x,x')f(x)P_p(x',x)\,dv_X(x')
=f(x)\cP_{x}(0,0)p^{n}+O(p^{n-1}) \|f\|_{\cC^0}\\
&=f(x)\bb_0(x)p^n+O(p^{n-1}) \|f\|_{\cC^0}\,.
\end{split}
\end{equation}
Note that if $f$ is a function, then we have
$$
\int_{B(x,\varepsilon)} P_p(x,x')f(x)P_p(x',x)\,dv_X(x')
=f(x) \Big(P_p(x,x')+O(p^{-\infty})\Big) \,.
$$
The second one can be estimated by
\begin{equation}\label{toe2.8b}
\begin{split}
\Big|\int_{B(x,\varepsilon)}& P_p(x,x')\big(f(x')-f(x)\big)
P_p(x',x)\,dv_X(x')\Big|\\
&\leqslant
\delta\int_{B(x,\varepsilon)} |P_p(x,x')||P_p(x',x)|\,dv_X(x')\\
&=\delta\int_{B(0,\varepsilon)} |P_{p,x}(0,Z')||P_{p,x}(Z',0)|\,dv_X(Z')\, .
\end{split}
\end{equation}
We use now the off-diagonal expansion from Theorem \ref{tue17}. 
By \eqref{toe1.9} we have 
\begin{equation}\label{toe2.8ba}
\begin{split}
&P_{p,x}(Z,0)=p^{n}\Big(\textstyle\prod_{j=1}^n\frac{a_j}{2\pi}\,
e^{-\frac{1}{4}p\sum_{j=1}^na_j|z_j|^2}
+p^{-1/2}R_{p,x}(Z,0)+O(p^{-\infty})\Big)
\kappa_{x}^{-\frac{1}{2}}(Z)\,\\
&\quad\text{where}\quad|R_{p,x}(Z,0)|
\leqslant\,C(1+\sqrt{p}\,|Z|)^M\,e^{-C_0\,\sqrt{ p}\,|Z|}\,,
\end{split}
\end{equation}
so in order to estimate the last integral in \eqref{toe2.8b},
by (\ref{atoe2.7}),
we have to estimate
\[
\int_{B(0,\varepsilon)} p^{2n}
\Big|\textstyle\prod_{j=1}^n\frac{a_j}{2\pi}\,
e^{-\frac{1}{4}p\sum_{j=1}^na_j|z'_j|^2}+p^{-1/2}R_{p,x}(Z',0)
+O(p^{-\infty})\Big|^2 \,dZ'\,.
\]
By using the change of variables $\sqrt{p}Z'=Y$ we see that
\[
p\int_{\C}e^{-\pi p|Z'|^2}\,dZ'=1\,,
\quad p^n\int_{\C^n}(1+\sqrt{p}\,|Z'|)^M\,
e^{-C_0\,\sqrt{ p}\,|Z'|}\,dv_{X}(Z')=O(1)\,,
\]
hence
\[
\int_{B(0,\varepsilon)} |P_{p,x}(0,Z')||P_{p,x}(Z',0)|\,dv_{X}(Z')
=O(p^n)\,,
\]
so there exists $C>0$ such that for any 
$x\in X$, $f\in \cA ^0$, $p\in \N$, we have
\begin{equation}\label{toe2.8c}
\Big|\int_{B(x,\varepsilon)} P_p(x,x')\big(f(x')-f(x)\big)
P_p(x',x)\,dv_X(x')\Big|\leqslant C\delta\,p^n\,.
\end{equation}
From \eqref{eq3.11}, \eqref{toe3.13}
and  \eqref{toe2.8c}, we get \eqref{toe2.7a}.
%Estimates \eqref{toe2.8a}, \eqref{toe2.8c} yield 
%$|p^{-n}T_{f,p}(x,x)-f(x)\bb_{0}(x)|\leqslant C_1\delta$ for 
%some $C_1>0$ and $p$ sufficiently large. Hence \eqref{toe2.7a}
%follows.

\comment{Assume that $f\in\cC^1(X,\End(E))$. 
Recall that we trivialized $E$ by a unit frame 
over $B^{T_{\!x}X}(0,4\var)$ which is parallel with respect to 
$\nabla^E$ along the geodesics starting in $x$.
With this trivialization, the section $f\in\End(E)$ induces 
a smooth section 
\[B^{T_{\!x}X}(0,4\var)\ni Z\mapsto f_{x}(Z)\]
of $\pi^*(\End(E))$ over $TX\times_{X} TX$, which depends 
smoothly on $x$.  }

By Taylor's formula, there exist
$C>0, \varepsilon>0$, such that for $|Z|\leqslant\varepsilon$,
$f\in \cA ^1$, we have
\begin{equation}\label{toe2.8d}
f_{x}(Z)-f_{x}(0)=%\textstyle\sum_{j}\partial_j f_{x}(0)Z_j+
R(Z)\,,\quad |R(Z)|\leqslant C |Z|\,.
\end{equation} 
We repeat the proof above by plugging this expression in the 
integral from \eqref{toe2.8c},
\comment{
We observe that 
\[\int_{\C^n}e^{-\pi p\sum_{j=1}^na_j|Z_j|^2}Z_j\,dZ=0\] 
so that only }
we observe that only
\[
\int_{B(0,\varepsilon)} |P_{p,x}(0,Z')||Z'||P_{p,x}(Z',0)|\,dv_{X}(Z')
\] 
contributes to the subleading term.
But then the change of variables $\sqrt{p}Z'=Y$ introduces a factor
$p^{-1/2}$, whereof \eqref{toe2.7b} follows.

Finally,
for any $\delta>0$ there exists $\varepsilon>0$, such that for 
$|Z|\leqslant\varepsilon$, $f\in \cA ^2$,
%given a neighborhood $B(0,\varepsilon)$ there exists $C>0$
%such that for $Z\in B(0,\varepsilon)$
\begin{equation}\label{toe2.8e}
f_{x}(Z)-f_{x}(0)=\textstyle\sum_{j}\partial_j f_{x}(0)Z_j
+\sum_{j,k}\partial_{jk} f_{x}(0)Z_jZ_k+R(Z)
\,,\quad |R(Z)|\leqslant\delta|Z|^2\,.
%+R(Z)\,,\quad |R(Z)|\leqslant C|Z|^2\,.
\end{equation} 
%We see as above that only the integrals of 
%\[|P_{p,x}(0,Z')||Z'_jZ'_k||P_{p,x}(Z',0)|\,,
%\quad|P_{p,x}(0,Z')||Z'|^2|P_{p,x}(Z',0)|\] 
%over $B(0,\varepsilon)$
%contribute to the subleading term, which yields an expansion 
%$p^{-n}T_{f,p}(x,x)=\bb_0(x)f(x)+{b}_{1,f}(x)p^{-1}+o(p^{-1})$, 
%$p\to\infty$. 
Taking into account the proof of the
asymptotic expansion \eqref{toe2.1} 
from \cite[Lemma 4.6]{MM08b}, we see that \eqref{toe2.7c} holds.
%actually ${b}_{1,f}=\bb_{1,f}$. This proves \eqref{toe2.7c}.
\end{proof}
%--------------------------------------
\begin{remark}
In the same vein, we show that in the conditions of 
Theorem \ref{toet2.2}, we have for $f\in\cC^k(X,\End(E))$, $k\in\N$, as $p\to\infty$, 
\begin{equation}
p^{-n}T_{f,p}(x,x)=\sum_{r=0}^{\lfloor k/2\rfloor}\bb_{r,f}(x)p^{-r}
+R_{k,p}(x)\,,\:\text{uniformly on $X$,}
\end{equation}
where $\bb_{r,f}$ are the universal coefficients 
from \eqref{toe2.1} and
\begin{equation}
R_{k,p}=\begin{cases}
o(p^{-k/2})\,,&\quad\text{for $k$ even}\,,\\
O(p^{-k/2})\,,&\quad\text{for $k$ odd}\,.
\end{cases}
\end{equation}
Here $\lfloor a\rfloor$ denotes the integer part of $a\in\R$.
\end{remark}
%--------------------------------------
We recall that by \cite[(4.79)]{MM08b}, \cite[(7.4.6)]{MM07}, 
for any $f,g\in\cC^\infty(X,\End(E))$, the kernel of 
the composition $T_{f,\,p}\circ T_{g,\,p}$
has for all $\ell\in\N$ an asymptotic expansion on the diagonal in 
the $\cC^\infty$-topology,
\begin{equation}\label{toe4.30}
(T_{f,\,p}\,\circ T_{g,\,p})(x,x)
=\sum_{r=0}^{\ell} \bb_{r,\,f,\,g}(x) p^{n-r}+\cO(p^{n-\ell-1})\,,
\quad  \bb_{r,\,f,\,g}\in \cC^\infty(X,\End(\bE))\,.
\end{equation}
The coefficients $\bb_{r,\,f,\,g}$, $r=0,1,2$, were calculated in 
\cite[Theorem\,0.2]{MM12} in the case of a K\"ahler manifold 
$(X,\omega)$ and of holomorphic bundles $L$ and $E$. 
We give here the analogue of the expansion \eqref{toe4.30}
in the case of $\cC^k$ symbols.
%--------------------------------------
\begin{theorem} \label{toet2.3}
Let $m\in\N$ and $f_1,\ldots,f_m\in L^{\infty}(X,\End(E))$. 
Write
\begin{equation}\label{toe2.9}
p^{-n}(T_{f_1,p}\ldots T_{f_m,p})(x,x)=f_1(x)\ldots f_m(x)\bb_0(x)
+R_p(x)\,.
\end{equation}
We have as $p\to\infty$, uniformly in $x\in X$,
\begin{align}
R_p(x)&=o(1),\: \text{uniformly on } f_{i}\in \cA_{\infty} ^0\, ,
%\text{for $f_i\in\cC^0(X,\End(E))$}\,,
\label{toe2.9a}\\
R_p(x)&=O(p^{-1/2}),\: 
\text{uniformly on } f_{i}\in \cA_{\infty} ^1\, ,
%\text{for $f_i\in\cC^1(X,\End(E))$}\,,
\label{toe2.9b}\\
R_p(x)&=O(p^{-1}),\: \text{uniformly on } f_{i}\in \cA_{\infty} ^2. 
%\text{for $f_i\in\cC^2(X,\End(E))$}\,,
\label{toe2.9c}
\end{align}
\end{theorem}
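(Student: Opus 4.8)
The plan is to follow the proofs of Theorem~\ref{toet2.2} and of the smooth composition expansion~\eqref{toe4.30}, exploiting that all the delicate analysis is already contained in the off-diagonal Bergman expansion of Theorem~\ref{tue17}. I start from the iterated-integral formula for the composition kernel,
\[
(T_{f_1,p}\cdots T_{f_m,p})(x,x')=\int_{X^m}P_p(x,y_1)f_1(y_1)P_p(y_1,y_2)\cdots f_m(y_m)P_p(y_m,x')\,dv_X(y_1)\cdots dv_X(y_m)\,,
\]
which has $m+1$ Bergman factors and $m$ integrations. The first step is a uniform localization near the diagonal: in the region where some $y_i$ satisfies $d(y_i,x)>\varepsilon$, the chain $x,y_1,\dots,y_m,x$ contains two consecutive points at distance $>\varepsilon/(2m)$, so Proposition~\ref{0t3.0} gives $\cO(p^{-\infty})$ for that factor while \eqref{toe2.6d} bounds the remaining $m$ factors by $Cp^n$; integrating over the compact $X^m$ shows that, up to an error $\cO(p^{-\infty})\prod_i\|f_i\|_\infty$, one may restrict all $y_i$ to $B(x,\varepsilon)$, with $\varepsilon$ chosen small depending only on the family.

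On this small ball I pass to normal coordinates centered at $x$ with the frames parallel along geodesics, rescale $y_i\leftrightarrow Z_i=W_i/\sqrt{p}$, and insert the rescaled expansion of Theorem~\ref{tue17} in each of the $m+1$ factors $P_{p,x}$; the Jacobians $dv_X$ and the weights $\kappa_x^{-1/2}$ from \eqref{toe2.71} cancel pairwise at the internal vertices $Z_1,\dots,Z_m$ and trivially at the endpoints $Z=0$, and the powers of $p$ combine to $p^{(m+1)n}\cdot p^{-mn}=p^n$, matching the normalization $p^{-n}$ on the left. Next I Taylor expand each symbol about $x$: for $f_i\in\cA^0$, $f_i(y_i)=f_i(x)+(f_i(y_i)-f_i(x))$ with the second term $\leqslant\delta$ uniformly by equicontinuity; for $f_i\in\cA^1$, $f_i(y_i)=f_i(x)+R_i(Z_i)$ with $|R_i(Z_i)|\leqslant C|Z_i|$ by the uniform bound on $\nabla^E f_i$; for $f_i\in\cA^2$, expand to second order with remainder $|R_i(Z_i)|\leqslant C|Z_i|^2$. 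Expanding the product $\prod_i(f_i(x)+\cdots)$ and regrouping, the contribution in which every $f_i$ is replaced by the parallel transport of $f_i(x)$ equals $f_1(x)\cdots f_m(x)\bb_0(x)\,p^n+O(p^{n-1})$ --- here one pulls the constants $f_i(x)$ out, which is legitimate at leading order because $J_{0,x}=I_{\C\otimes E}$ is central for the fibrewise $\End(E)$-action, and then uses the reproducing property of $\cP_x$ together with \eqref{toe2.712}. Every other contribution carries a factor $R_i$ (hence an extra $p^{-1/2}$ for $\cA^1$, resp.\ $p^{-1}$ for $\cA^2$, after the rescaling $Z_i=W_i/\sqrt{p}$), or a non-leading polynomial $J_{r,x}$, $r\geqslant1$; moreover every contribution of total order $p^{n-1/2}$ --- including the $J_{1,x}$-correction to the term just discussed --- vanishes, since it contains exactly one factor odd under the simultaneous reflection $W_j\mapsto-W_j$ ($J_{1,x}$, or a linear Taylor term of some $f_i$), while every $\cP_x(W_{j-1},W_j)$ and every constant $f_i(x)$ is invariant, exactly as in \eqref{eq3.12}. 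Collecting: $R_p(x)=o(1)$ in the $\cC^0$ case (the $\delta$-terms are $O(\delta)\,p^n$ and all else $O(p^{n-1/2})$), $R_p(x)=O(p^{-1/2})$ in the $\cC^1$ case, and $R_p(x)=O(p^{-1})$ in the $\cC^2$ case.

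Uniformity in $x$ and over the families follows by checking that every constant above --- the localization constant $C_{\ell,m,\varepsilon}$, the Taylor-remainder constants, and the bounds for the rescaled integrals of the form $\int_{\R^{2nm}}\prod_i(1+|W_{i-1}|+|W_i|)^M e^{-C_0|W_{i-1}-W_i|}\,\prod_i dv_{TX}(W_i)$, which are finite because the exponents telescope under the change of variables $U_i=W_i-W_{i-1}$ --- depends only on $\varepsilon$, on $m$, and on the uniform bounds for $\sup_i\|f_i\|_{\cC^0}$, $\nabla^E f_i$ and $\nabla^E\nabla^E f_i$, all finite on $\cA^k_\infty$. Since the substantive estimates are inherited from Theorem~\ref{tue17}, the proof is largely organizational; the point requiring the most care is precisely this uniform coupling of the localization step with the rescaled expansion, together with the parity cancellation of the $p^{n-1/2}$ terms, which is what yields the $O(p^{-1})$ rate in the $\cC^2$ case.
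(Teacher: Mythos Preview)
Your proposal is correct and follows essentially the same route as the paper: localize via Proposition~\ref{0t3.0}, insert the off-diagonal expansion of Theorem~\ref{tue17} in each of the $m+1$ Bergman factors, Taylor-expand the symbols about $x$, and use the odd parity of $J_{1,x}$ (and, in the $\cA^2$ case, of the linear Taylor terms) under $W\mapsto -W$ to kill the $p^{n-1/2}$ contributions. The only organizational differences are that the paper writes $I-I_0$ as a telescoping sum $\sum_{j=1}^m I_j$ (replacing one $f_j$ at a time by $f_{j,x}(Z_j)-f_{j,x}(0)$) rather than expanding the full product $\prod_i\bigl(f_i(x)+\cdots\bigr)$, and bounds the resulting $\R^{2nm}$-integral by recursively pairing the endpoint Gaussian with the chain of exponentials $e^{-C_0\sqrt{p}\,|Z_i-Z_{i+1}|}$ (cf.~\eqref{toe2.10a}) rather than your change of variables $U_i=W_i-W_{i-1}$; these are equivalent bookkeeping devices.
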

%--------------------------------------
\begin{proof} To prove \eqref{toe2.9a} let $\delta>0$ be given. 
    Choose $\varepsilon>0$ such that for
$x'\in B(x,\varepsilon)$ we have $|f_j(x')-f_j(x)|\leqslant\delta$, 
$1\leqslant j\leqslant m$, where $f_j(x)\in\End(\bE_{x'})$
is the parallel transport of $f_j(x)\in\End(\bE_{x})$ 
as in the previous proof. By \eqref{bk2.4} we have 
$T_{f_1,p}\ldots T_{f_m,p}=P_pf_1P_pf_2\ldots P_pf_mP_p$ 
hence 
\begin{equation}
\begin{split}
&(T_{f_1,p}\ldots T_{f_m,p})(x,x)\\
&=\int_{X^m} P_p(x,x_1)f_1(x_1)P_p(x_1,x_2)f_{2}(x_2)\ldots 
f_m(x_m)P_p(x_m,x)\textstyle\prod_{i=1}^mdv_X(x_i)\\
&=I+O(p^{-\infty})\,,
\end{split}
\end{equation}
where
\begin{multline*}
I=\int\limits_{\stackrel{|Z_i|\leqslant\varepsilon}{1
\leqslant i\leqslant m}} 
P_{p,x}(0,Z_1)f_{1,x}(Z_1)P_{p,x}(Z_1,Z_2)f_{2,x}(Z_2)
\ldots f_{m,x}(Z_m)P_{p,x}(Z_m,0)
\textstyle\prod_{i=1}^mdv_X(Z_i).
\end{multline*}
We write now
\[
I=I_0+\sum_{j=1}^{m}I_j
\]
with
\begin{align}\label{toe2.11}
\begin{split}
I_0=\int\limits_{\stackrel{|Z_i|\leqslant\varepsilon}{1
\leqslant i\leqslant m}}& P_p(0,Z_1)f_{1,x}(0)P_p(Z_1,Z_2)
f_{2,x}(0)\ldots f_{m,x}(0)P_p(Z_m,0)
\textstyle\prod_{i=1}^mdv_X(Z_i)\\
&=f_1(x)\ldots f_m(x)\big(\cP_{x}(0,0)p^{n}+O(p^{n-1})\big)\,,
\end{split}
\end{align}
in the above second equation, we use the argument 
in \eqref{eq3.12}, and for $1\leqslant j\leqslant m$,
\[
\begin{split}
I_j=\int\limits_{\stackrel{|Z_i|\leqslant\varepsilon}
{1\leqslant i\leqslant m}} P_p(0,Z_1)&f_{1,x}(Z_1)
\ldots P_p(Z_{j-1},Z_{j})\big(f_{j,x}(Z_{j})-f_{j,x}(0)\big)\\
&P_p(Z_{j},Z_{j+1})f_{j+1,x}(Z_{j+1})\ldots f_{m,x}(Z_m)P_p(Z_m,0)
\textstyle\prod_{i=1}^mdv_X(Z_i).
\end{split}
\]
By \eqref{toe1.9},  for $1\leqslant j\leqslant m$, we have
\begin{equation}\label{toe2.10}
|I_j|\leqslant C\delta\int\limits_{\stackrel{|Z_i|\leqslant\varepsilon}
{1\leqslant i\leqslant m}}
|P_p(0,Z_1)|A_p|P_p(Z_m,0)|\textstyle\kappa ^{1/2}_{x}(Z_{1})
\kappa ^{1/2}_{x}(Z_{m})\prod_{i=1}^md Z_i
\end{equation}
where
\[
A_p=p^{(m-1)n}\prod_{i=1}^{m-1}(1+\sqrt{p}\,|Z_i|
+\sqrt{p}\,|Z_{i+1}|)^M \,
e^{-C_0\,\sqrt{ p}\,|Z_i-Z_{i+1}|}\,.
\]
We plug now the expansion \eqref{toe2.8ba} for $P_p(0,Z_1)$ 
and $P_p(Z_m,0)$ in \eqref{toe2.10}
and estimate the exponential terms appearing there.
By (\ref{main1a}), we have
\[
\begin{split}
&\exp(-\textstyle\frac{p}{4}\sum_{j=1}^na_j|Z_{1j}|^2-C_0\,
\sqrt{p}\,|Z_1-Z_{2}|)\\
%&\hspace{10mm}\leqslant\exp(-\tfrac{\mu_{0}}{8} p|Z_1|^2)\,
%\exp(-\tfrac{\mu_{0}}{8} p|Z_1|^2-C_0\,\sqrt{p}\,|Z_1-Z_{2}|)\\
&\hspace{20mm}\leqslant 
\exp(-\tfrac{\mu_{0}}{8} p|Z_1|^2)\,
\exp(-C(\sqrt{p}|Z_2|-\tfrac14)),
%&\hspace{10mm}=C'\exp(-\tfrac{\mu_{0}}{8} p|Z_1|^2)\,
%\exp(-\tfrac{C}{2}\sqrt{p}|Z_2|)\,\exp(-\tfrac{C}{2}\sqrt{p}|Z_2|)\,,
\end{split}
\]
since
\[
\begin{split}
&\tfrac{\mu_{0}}{8} p|Z_1|^2+C_0\,\sqrt{p}\,|Z_1-Z_{2}|\geqslant
C\big(p|Z_1|^2+\sqrt{p}\,|Z_1-Z_{2}|\big)\\
&\hspace{20mm}\geqslant
C\big(\sqrt{p}|Z_1|-\tfrac14+\sqrt{p}\,|Z_1-Z_{2}|\big)\geqslant
C\big(\sqrt{p}|Z_2|-\tfrac14\big)\,.
\end{split}
\]
We pair now one factor $e^{-\frac{C}{2}\sqrt{p}|Z_2|}$ with
$e^{-C_0\,\sqrt{ p}\,|Z_2-Z_{3}|}$
and obtain
\[
e^{-\frac{C}{2}\sqrt{p}|Z_2|}\,e^{-C_0\,\sqrt{ p}\,|Z_2-Z_{3}|}
\leqslant
e^{-C_3\sqrt{p}|Z_3|}=e^{-\frac{C_3}{2}\sqrt{p}|Z_3|}\,
e^{-\frac{C_3}{2}\sqrt{p}|Z_3|}\,;
\]
we pair further $e^{-\frac{C_3}{2}\sqrt{p}|Z_3|}$ with 
$e^{-C_0\,\sqrt{ p}\,|Z_3-Z_{4}|}$ and so on. 
Finally, we obtain for the left-hand side of \eqref{toe2.10} 
the estimate
\begin{equation}\label{toe2.10a}
\begin{split}
&\int\limits_{\stackrel{|Z_i|\leqslant\varepsilon}
{1\leqslant i\leqslant m}}
|P_p(0,Z_1)|A_p|P_p(Z_m,0)|\textstyle\kappa ^{1/2}_{x}(Z_{1})
\kappa ^{1/2}_{x}(Z_{m})\prod_{i=1}^md Z_i\\ 
&\leqslant\int\limits_{\stackrel{|Z_i|\leqslant\varepsilon}
{1\leqslant i\leqslant m}}
\!\exp\!\Big(\!-\frac{\mu_0}{8} p|Z_1|^2-C_2 \sqrt{p}|Z_2|-
\ldots-C_{m-1} \sqrt{p}|Z_{m-1}|-\frac{\mu_0}{4} p|Z_m|^2\Big)
p^{2n}B_p\textstyle\prod_{i=1}^mdZ_i
\end{split}
\end{equation}
where
\[
B_p=p^{(m-1)n}(1+\sqrt{p}\,|Z_1|)^M(1+\sqrt{p}\,|Z_m|)^M
\prod_{i=1}^{m-1}(1+\sqrt{p}\,|Z_i|+\sqrt{p}\,|Z_{i+1}|)^M \,.
\]
Since the right-hand side integral in \eqref{toe2.10a} converges 
we obtain that $|I_j|\leqslant C'\delta p^n$, for some $C'>0$. 
This completes the proof of \eqref{toe2.9a}.

To prove \eqref{toe2.9b} and \eqref{toe2.9c} we repeat 
the proof above by estimating
$f_{j,x}(Z_{j})-f_{j,x}(0)$ with the help of Taylor 
formulas \eqref{toe2.8d} and \eqref{toe2.8e}. As in the proof of 
Theorem \ref{toet2.2} we obtain the remainders $O(p^{-1/2})$ and
$O(p^{-1})$, respectively, due to the change of variables 
$\sqrt{p}Z=Y$.
\end{proof}
%----------------------------------
%--------------------------------------
\begin{remark}
In the same vein, we show that if $f,g\in\cC^{k}(X,\End(E))$, 
$k\in\N$, we have as $p\to\infty$, 
\begin{equation}
(T_{f,\,p}\,\circ T_{g,\,p})(x,x)
=\sum_{r=0}^{\lfloor k/2\rfloor} \bb_{r,\,f,\,g}(x) p^{n-r}
+R_{k,p}(x)\,,\:\text{uniformly on $X$,}
\end{equation}
where $\bb_{r,f,g}$ are the universal coefficients 
from \eqref{toe4.30} and
\begin{equation*}
R_{k,p}=\begin{cases}
o(p^{-k/2})\,,&\quad\text{for $k$ even}\,,\\
O(p^{-k/2})\,,&\quad\text{for $k$ odd}\,.
\end{cases}
\end{equation*}
\end{remark}
%--------------------------------------
%----------------------------------
We will now consider traces of Toeplitz operators.
%----------------------------------
\begin{theorem}
Let $f\in L^{\infty}(X,\End(E))$. Then for any $k\in\N$ we have as 
$p\to\infty$,
\begin{equation}\label{toe4.1}
\tr(T_{f,p})=\sum_{r=0}^k\bt_{r,f}p^{n-r}+O(p^{n-k-1})\,,
\quad\text{with $\bt_{r,f}=\int_X\tr[\bb_rf]\,dv_X$}\,.
%p^{-n}\tr(T_{f,p})=\int_X\tr[f]\,dv_X+O(p^{-1})\,.
\end{equation}
\end{theorem}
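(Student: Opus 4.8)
The plan is to turn the operator trace into an integral of the fibrewise trace of the Bergman kernel on the diagonal, and then to feed in the diagonal expansion \eqref{toe1.9b}. First I would note that $P_p$ has finite rank $d_p=\dim\Ker(D_p)$, so $T_{f,p}=P_pfP_p$ is of finite rank, hence trace class, and by cyclicity $\tr(T_{f,p})=\tr(fP_p)$. Extending an orthonormal basis $\{S^p_i\}_{i=1}^{d_p}$ of $\Ker(D_p)$ to one of $L^2(X,L^p\otimes\bE)$ and using $P_p S^p_i=S^p_i$ together with $P_p$ killing the orthocomplement, I get $\tr(fP_p)=\sum_{i=1}^{d_p}\langle fS^p_i,S^p_i\rangle_{L^2}$. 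By \eqref{lm2.0} this is $\int_X\sum_i\langle f(x)S^p_i(x),S^p_i(x)\rangle_{\bE_x}\,dv_X(x)$, and the pointwise identity $\sum_i\langle f(x)S^p_i(x),S^p_i(x)\rangle_{\bE_x}=\tr_{\bE_x}\!\big[f(x)P_p(x,x)\big]$ follows from the expression \eqref{bk2.4a} of the Bergman kernel. Hence
\begin{equation}\label{pr:trdiag}
\tr(T_{f,p})=\int_X\tr_{\bE_x}\!\big[f(x)P_p(x,x)\big]\,dv_X(x)\,,
\end{equation}
where $f(x)\in\End(E_x)$ is regarded in $\End(\bE_x)$ as acting trivially on $\Lambda^\bullet(T^{*(0,1)}X)$. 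Note that \eqref{pr:trdiag} is valid for any $f\in L^\infty(X,\End(E))$, since its right-hand side is just the integral of a bounded measurable function against the smooth section $x\mapsto P_p(x,x)$.

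Next I would insert the diagonal Bergman kernel expansion \eqref{toe1.9b}. Since $X$ is compact this expansion is uniform in $x$: for each $k\in\N$ there is $C_k>0$ with $\big|P_p(x,x)-\sum_{r=0}^k\bb_r(x)p^{n-r}\big|\leqslant C_k\,p^{n-k-1}$ for all $x\in X$ and $p\geqslant1$, the norm being the one on $\End(\bE_x)$. Multiplying by $f(x)$, taking the fibrewise trace, and using $|\tr_{\bE_x}[AB]|\leqslant C|A|\,|B|$, I obtain for all $x\in X$
\[
\Big|\tr_{\bE_x}\!\big[f(x)P_p(x,x)\big]-\sum_{r=0}^k\tr_{\bE_x}\!\big[f(x)\bb_r(x)\big]\,p^{n-r}\Big|\leqslant C_k'\,\|f\|_{L^\infty}\,p^{n-k-1}\,.
\]
Integrating this over $X$, which has finite volume, and using \eqref{pr:trdiag} together with the cyclicity $\tr_{\bE_x}[f\bb_r]=\tr_{\bE_x}[\bb_r f]$, yields \eqref{toe4.1} with $\bt_{r,f}=\int_X\tr[\bb_r f]\,dv_X$ and remainder bounded by $C_k'\,\|f\|_{L^\infty}\,\vol(X)\,p^{n-k-1}=O(p^{n-k-1})$.

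I do not expect a genuine obstacle here. The only points deserving care are the justification of the trace–diagonal identity \eqref{pr:trdiag} for a merely bounded measurable symbol — handled above via the finite-rankness of $P_p$ and the basis $\{S^p_i\}$ — and the bookkeeping between $\End(E)$ and $\End(\bE)$ in the fibrewise trace. In contrast to Theorems \ref{toet2.2} and \ref{toet2.3}, no regularity of $f$ and no Toeplitz-kernel expansion are needed: cyclicity of the trace reduces $\tr(P_pfP_p)$ directly to $\tr(fP_p)$, so the plain Bergman expansion \eqref{toe1.9b} suffices, which is why \eqref{toe4.1} holds for all $f\in L^\infty$ with a full $O(p^{n-k-1})$ remainder.
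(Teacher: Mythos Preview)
Your proof is correct and follows essentially the same route as the paper: use cyclicity of the trace to reduce $\tr(P_pfP_p)$ to $\tr(P_pf)=\int_X\tr[P_p(x,x)f(x)]\,dv_X(x)$, then insert the diagonal Bergman expansion \eqref{toe1.9b} and integrate. You supply more detail on the trace--diagonal identity and on the $\End(E)$ versus $\End(\bE)$ bookkeeping than the paper does, but the argument is the same.
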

%----------------------------------
\begin{proof}
%Due to the uniform expansion 
%$P_p(x,x)=\sum_{r=0}^\infty\bb_r(x)p^{n-r}$
By \eqref{toe1.9b},  we infer
\begin{equation}
\begin{split}
\tr(T_{f,p})&=\tr(P_pfP_p)=\tr(P_pf)
=\int_X\tr\big[P_p(x,x)f(x)\big]\,dv_X(x)\\
&=
\sum_{r=0}^k p^{n-r}\int_X\tr\big[\bb_r(x) f(x)\big]dv_X(x)
+O(p^{n-k-1})\,. 
%\int_X\tr[f]\,dv_X+O(p^{-1})\,.
\end{split}
\end{equation}
\end{proof}
%----------------------------------
%The following result appears in \cite[p.\,292]{BMS}, 
%\cite[Th.\,4.2]{BPU} for $\cC^\infty$--\,symbols.
%----------------------------------
\begin{theorem}\label{toet2.5}
Let $f_1,\ldots,f_m\in L^{\infty}(X,\End(E))$. Write
\begin{equation}\label{eq3.30}
p^{-n}\tr(T_{f_1,p}\ldots T_{f_m,p})
=\int_X\tr\big[f_1\ldots f_m\big]\,\frac{\omega^n}{n\,!}+R_p\,.
\end{equation}
Then as $p\to\infty$,
\begin{equation}\label{eq3.31}
R_p=\begin{cases}
o(1)\,,&\quad \text{uniformly on } f_{i}\in \cA_{\infty} ^0\, , \\
%\text{for $f_i\in\cC^0(X,\End(E))$}\,,\\
O(p^{-1/2})\,,&\quad
\text{uniformly on } f_{i}\in \cA_{\infty} ^1\, ,\\
%\text{for $f_i\in\cC^1(X,\End(E))$}\,,\\
O(p^{-1})\,,&\quad \text{uniformly on } f_{i}\in \cA_{\infty} ^2\, .
%\text{for $f_i\in\cC^2(X,\End(E))$}\,.
\end{cases}
\end{equation}
\end{theorem}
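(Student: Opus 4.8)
The plan is to deduce the statement from the pointwise diagonal expansion of Theorem \ref{toet2.3} by integrating over $X$. First I would note that for each fixed $p$ the operator $T_{f_1,p}\cdots T_{f_m,p}=P_pf_1P_p\cdots f_mP_p$ has finite rank, since $\operatorname{rank}P_p=d_p<\infty$; hence it is trace class, and by iterating \eqref{toe2.5} its Schwartz kernel $(T_{f_1,p}\cdots T_{f_m,p})(\cdot,\cdot)$ is smooth. Therefore
\begin{equation*}
\tr(T_{f_1,p}\cdots T_{f_m,p})=\int_X\tr_{\bE_x}\big[(T_{f_1,p}\cdots T_{f_m,p})(x,x)\big]\,dv_X(x)\,,
\end{equation*}
where the trace on the right is the fiberwise trace over $\bE_x=\Lambda^{\bullet}(T^{*(0,1)}_xX)\otimes E_x$.

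Next I would insert the expansion \eqref{toe2.9}, $p^{-n}(T_{f_1,p}\cdots T_{f_m,p})(x,x)=f_1(x)\cdots f_m(x)\bb_0(x)+R_p(x)$, take the fiberwise trace and integrate. Since $\dim_\C\bE$ is finite and $X$ is compact, the contribution of $R_p$ is at most $(\dim_\C\bE)\,\vol(X)\,\sup_{x\in X}|R_p(x)|$, which by Theorem \ref{toet2.3} is $o(1)$, $O(p^{-1/2})$, resp. $O(p^{-1})$, uniformly for $f_i$ in $\cA_\infty^0$, $\cA_\infty^1$, resp. $\cA_\infty^2$. So the three estimates in \eqref{eq3.31} will follow once the leading term $\int_X\tr_{\bE_x}[f_1(x)\cdots f_m(x)\bb_0(x)]\,dv_X(x)$ is identified with the first term on the right of \eqref{eq3.30}.

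For that identification I would use $\bb_0(x)={\det}_\C|\bJ_x|\,I_{\C\otimes E}$ from \eqref{toe2.712}. Each $f_i(x)\in\End(E_x)$ acts on $\bE_x$ as $\operatorname{Id}_{\Lambda^\bullet}\otimes f_i(x)$ and therefore commutes with the orthogonal projection $I_{\C\otimes E}$ onto $\C\otimes E_x=\Lambda^0(T^{*(0,1)}_xX)\otimes E_x$; since $\Lambda^0(T^{*(0,1)}_xX)$ is one-dimensional, this yields $\tr_{\bE_x}[f_1(x)\cdots f_m(x)\bb_0(x)]={\det}_\C|\bJ_x|\,\tr_{E_x}[f_1(x)\cdots f_m(x)]$. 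It then remains to recall that, diagonalizing $\bJ_x$ as in \eqref{3ue60} and working in the orthonormal frame \eqref{frame1} together with its dual coframe, one has $\omega=\sum_{j=1}^n\frac{a_j(x)}{2\pi}\,e^{2j-1}\wedge e^{2j}$, hence by \eqref{toe1.3a}, $\frac{\omega^n}{n!}=\prod_{j=1}^n\frac{a_j(x)}{2\pi}\,dv_X={\det}_\C|\bJ_x|\,dv_X$. Combining the last three displays gives $\int_X\tr_{\bE_x}[f_1\cdots f_m\bb_0]\,dv_X=\int_X\tr[f_1\cdots f_m]\,\frac{\omega^n}{n!}$, which is exactly the claimed leading term.

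The analytic substance is already contained in Theorem \ref{toet2.3}, so I do not expect a genuine obstacle; the one point requiring care is that the pointwise-in-$x$, uniform-in-$f$ estimate of that theorem survives the fiberwise trace and the integration over the diagonal — which works precisely because the fibers of $\bE$ are finite-dimensional and $X$ compact. A minor bookkeeping subtlety is keeping the trace over $\bE$ distinct from the trace over $E$, together with the distinction between the Riemannian volume $dv_X$ and the symplectic volume $\frac{\omega^n}{n!}$, the two differing exactly by the factor ${\det}_\C|\bJ_x|$ that is absorbed into the coefficient $\bb_0$.
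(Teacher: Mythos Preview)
Your proposal is correct and follows essentially the same route as the paper: express the trace as the integral of the diagonal kernel over $X$ and apply Theorem~\ref{toet2.3}. The paper's proof is a two-line sketch invoking dominated convergence, whereas you use the uniform-in-$x$ bound directly and, helpfully, spell out the identification of the leading term via $\bb_0(x)={\det}_\C|\bJ_x|\,I_{\C\otimes E}$ and $\frac{\omega^n}{n!}={\det}_\C|\bJ_x|\,dv_X$, which the paper leaves implicit.
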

%----------------------------------
\begin{proof}
We have
\[\tr(T_{f_1,p}\ldots T_{f_m,p})
=\int_X\tr(T_{f_1,p}\ldots T_{f_m,p})(x,x)\,dv_X\,,\]
and we apply Theorem \ref{toet2.3} together with the 
dominated convergence theorem.
\end{proof}
When $(X,J,\om)$ is a compact K\"ahler manifold, 
$g^{TX}(\cdot,\cdot)=\omega(\cdot,J\cdot)$, 
$E=\C$ with the trivial metric, and each
$f_{i}\in \cC ^{\infty}(X)$, then
\eqref{eq3.30} appears in \cite[p.\,292]{BMS}, 
\cite[Th.\,4.2]{BPU} with $R_p=O(p^{-1})$.
%%%%%%%%%%%%%%%%%%%%%%%%%%%%%%%%%%
\section{Expansion of a product of Toeplitz operators}\label{s3}
%%%%%%%%%%%%%%%%%%%%%%%%%%%%%%%%
We consider in this section the expansion of the composition of
two Toeplitz operators at the operator level. We recall first the 
situation for Toeplitz operators with smooth symbols. 
A {\em Toeplitz operator}\index{Toeplitz operator} 
is a sequence $\{T_p\}=\{T_p\}_{p\in\N}$ of linear operators
$T_{p}:L^2(X,L^p\otimes\bE)\longrightarrow 
L^2(X,L^p\otimes\bE)$
%---------------------------------------------------------------------------- 
with the properties:
\begin{itemize}
\item[(i)] For any $p\in \N$, we have 
$T_{p}=P_p\,T_p\,P_p$\,.
%----------------------------------------------------------------------------
\item[(ii)] There exist a sequence $g_l\in\cC^\infty(X,\End(E))$ 
such that for all $k\geqslant0$ there exists $C_k>0$ 
such that for all $p\in \N^*$, we have
%----------------------------------------------------------------------------
\begin{equation}\label{toe2.3}
\Big\|T_p-P_p\Big(\sum_{l=0}^k p^{-l}g_l\Big) P_p\Big\|
\leqslant C_k\,p^{-k-1},
\end{equation}
where $\norm{\cdot}$ denotes the operator norm on the space of 
bounded operators.
\end{itemize}
We write symbolically
\begin{equation}\label{atoe2.2}
T_p= P_p\Big(\sum_{l=0}^\infty p^{-l}g_l\Big) P_p
+\mO(p^{-\infty}).
\end{equation}

Let $f,g\in\cC^\infty(X,\End(E))$. By \cite[Theorem\,1.1]{MM08b}, 
the product of the Toeplitz operators 
$T_{f,\,p}$ and  $T_{g,\,p}$ is a Toeplitz operator, 
more precisely, it admits the asymptotic expansion
in the sense of \eqref{atoe2.2}{\rm:}
\begin{equation}\label{toe4.2}
T_{f,\,p}\circ T_{g,\,p}=\sum^\infty_{r=0}p^{-r}T_{C_r(f,g),\,p}
+\mO(p^{-\infty}),
\end{equation} 
where $C_r$ are bidifferential operators with smooth coefficients
of total degree $2r$ (cf. \cite[Lemma 4.6, (4.80)]{MM08b}). We have $C_0(f,g)=fg$ and  
if $f,g\in\cC^\infty(X)$, 
\begin{equation}\label{toe4.2b}
C_1(f,g)-C_1(g,f)= \sqrt{-1}\{f,g\} \Id_E.
\end{equation} 
In the case of a K\"ahler manifold $(X,\omega)$, the operators 
$C_0,C_1,C_2$ were calculated in \cite[Theorem\,0.1]{MM12}.

We study now the expansion of the product of two Toeplitz 
operators with $\cC^k$ symbols.
%----------------------------------
\begin{theorem}\label{toet2.4}
Let $k\in\N$ and $f,g\in\cC^{k}(X,\End(E))$. Then for 
$m\in\{0,\ldots,\lfloor k/2\rfloor\}$, we have
\begin{equation}\label{toe2.10b0}
T_{f,\,p}\,\circ T_{g,\,p}=\sum_{r=0}^m p^{-r}\,T_{C_r(f,g),p}
+R_{m,p}\,,
\end{equation}
where $C_r(f,g)$ are the universal coefficients from \eqref{toe4.2} 
and $R_{m,p}$ satisfies the following estimates: 
\begin{equation}\label{toe2.10c0}
R_{m,p}=\begin{cases}
o(p^{-k/2})\,,&\quad\text{for $m=\lfloor k/2\rfloor$}\,,\\
O(p^{-m-1})\,,&\quad\text{for $m<\lfloor k/2\rfloor$}\,,
\end{cases}
\end{equation}
in the operator norm sense.
\end{theorem}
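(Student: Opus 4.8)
The plan is to bound the operator
\[
R_{m,p}:=T_{f,p}\circ T_{g,p}-\sum_{r=0}^m p^{-r}\,T_{C_r(f,g),p}
\]
in operator norm through its Schwartz kernel, reusing the local computations behind \eqref{toe4.2} and the proof of Theorem \ref{toet2.3}. Since $C_r$ is a bidifferential operator of order $2r\leqslant 2m\leqslant k$, the symbol $C_r(f,g)$ lies in $\cC^{k-2r}(X,\End(E))\subset L^\infty(X,\End(E))$, so each $T_{C_r(f,g),p}$ is a Carleman operator with smooth kernel and $R_{m,p}$ is an integral operator. A Schur test gives
\[
\|R_{m,p}\|\leqslant\Big(\sup_x\int_X|R_{m,p}(x,y)|\,dv_X(y)\Big)^{1/2}
\Big(\sup_y\int_X|R_{m,p}(x,y)|\,dv_X(x)\Big)^{1/2},
\]
so it suffices to prove a pointwise kernel bound
\[
|R_{m,p}(x,y)|\leqslant \eta_p\,p^{\,n}\,\big(1+\sqrt p\,d(x,y)\big)^{M}
e^{-C_0\sqrt p\,d(x,y)}+\cO(p^{-\infty})
\]
with $\eta_p=o(p^{-k/2})$ when $m=\lfloor k/2\rfloor$ and $\eta_p=O(p^{-m-1})$ when $m<\lfloor k/2\rfloor$; the rescaling $\sqrt p\,Z=W$ in the resulting Gaussian integral produces a factor $p^{-n}$ and yields the stated operator-norm estimate. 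Since $T_{f,p}\circ T_{g,p}=P_pfP_pgP_p$, by \eqref{toe2.5} its kernel equals $\int\!\int P_p(x,u)f(u)P_p(u,v)g(v)P_p(v,y)\,dv_X(u)\,dv_X(v)$, while $T_{C_r(f,g),p}(x,y)=\int P_p(x,u)C_r(f,g)(u)P_p(u,y)\,dv_X(u)$. By Proposition \ref{0t3.0} and Lemma \ref{toet2.1} the contributions with $u$ or $v$ outside $B(x,\var)$ are $\cO(p^{-\infty})$, so we localize to $u,v\in B(x,\var)$ and pass to the normal coordinates centered at $x_0=x$ of Section \ref{s5.3}, writing $Z'\leftrightarrow y$ and $Z_1,Z_2\leftrightarrow u,v$.

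In these coordinates I Taylor-expand at the origin: $f_{x_0}(Z)=P^f_{x_0}(Z)+R^f_{x_0}(Z)$ and $g_{x_0}(Z)=P^g_{x_0}(Z)+R^g_{x_0}(Z)$, where $P^f_{x_0},P^g_{x_0}$ are the degree-$N$ Taylor polynomials of $f,g$ at $x_0$, with $N:=k$ when $m=\lfloor k/2\rfloor$ and $N:=2m+1$ (so $N+1=2m+2\leqslant k$) when $m<\lfloor k/2\rfloor$. Inserting $P^f_{x_0},P^g_{x_0}$ and the off-diagonal Bergman expansion of Theorem \ref{tue17} for the three factors $P_p$, and running the local computation from the proof of \eqref{toe4.2} (cf.\ \cite[Theorem\,1.1, Lemma\,4.6]{MM08b}), produces a finite expansion in powers $p^{\,n-j/2}$; the same local computation applied to $C_r(f,g)\in\cC^{k-2r}$ and $P_p$ gives the local kernel of $T_{C_r(f,g),p}$. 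The key point is that, because of the rescaling $\sqrt p\,Z=Y$ (a derivative $\partial^\alpha f$ always enters together with a factor $p^{-|\alpha|/2}$), the coefficient of $p^{\,n-j/2}$ in these expansions depends only on derivatives of $f,g$ of order $\leqslant j$ — for $T_{C_r(f,g),p}(x,y)$ the coefficient of $p^{\,n-r-t/2}$ uses $\leqslant t$ derivatives of $C_r(f,g)$, hence $\leqslant 2r+t$ derivatives of $f,g$. Hence for every $j\leqslant N$ these coefficients are determined exactly by the Taylor data retained in $P^f_{x_0},P^g_{x_0}$, so they coincide with their smooth-symbol values; and for $j\leqslant 2m+1$ only indices $r\leqslant m$ contribute to the coefficient of $p^{\,n-j/2}$ in the full series $\sum_{r\geqslant 0}p^{-r}T_{C_r(f,g),p}$, so the truncated sum $\sum_{r=0}^m$ already accounts for the whole cancellation at these orders. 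Since \eqref{toe4.2} says that in the smooth case the kernels of $T_{f,p}\circ T_{g,p}$ and $\sum_{r}p^{-r}T_{C_r(f,g),p}$ agree modulo $\cO(p^{-\infty})$, all coefficients of $p^{\,n-j/2}$ with $j\leqslant N$ cancel in $R_{m,p}(x,y)$, leaving a Gaussian-weighted remainder of size $O(p^{\,n-m-1})$ when $m<\lfloor k/2\rfloor$ and $o(p^{\,n-k/2})$ when $m=\lfloor k/2\rfloor$; for the latter one also uses that the $\cC^{k-2r}$-version of the Toeplitz-kernel expansion of $T_{C_r(f,g),p}(x,y)$ (the off-diagonal analogue of Theorem \ref{toet2.2} and the remark following it) carries a remainder $o(p^{\,n-(k-2r)/2})$, admissible precisely because $C_r(f,g)\in\cC^{k-2r}$, so that $p^{-r}$ times it is $o(p^{\,n-k/2})$.

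Finally I estimate the parts of the localized triple integral containing $R^f_{x_0}$ or $R^g_{x_0}$. When $N+1\leqslant k$ one has $|R^f_{x_0}(Z)|\leqslant C|Z|^{N+1}$ with $C$ uniform in $x_0$; when $N=k$ one has $|R^f_{x_0}(Z)|=o(|Z|^{N})$ uniformly in $x_0$ (by compactness of $X$ and uniform continuity of the $k$-th derivative of $f$); likewise for $g$. Substituting these into the triple integral, majorizing each $P_{p,x_0}$ by the Gaussian bound of Theorem \ref{tue17} (as in \eqref{toe2.6e}), chaining the exponentials over the intermediate variables as in \eqref{toe2.10a}, and rescaling $\sqrt p\,Z_i=Y_i$, every extra factor of $|Z_i|$ contributes $p^{-1/2}$; hence these terms are $O(p^{\,n-(N+1)/2})=O(p^{\,n-m-1})$ when $m<\lfloor k/2\rfloor$, and $o(p^{\,n-N/2})=o(p^{\,n-k/2})$ when $m=\lfloor k/2\rfloor$. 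Combining this with the estimate of the polynomial part gives the required kernel bound with the asserted $\eta_p$, and the Schur estimate concludes. I expect the main obstacle to be the bookkeeping in the middle step: one must verify that, truncating the smooth identity \eqref{toe4.2} at the order dictated by $k$ and $m$, every surviving coefficient — both of $T_{f,p}\circ T_{g,p}(x,y)$ and of the $T_{C_r(f,g),p}(x,y)$ with $r\leqslant m$ — involves only derivatives of $f,g$ of order $\leqslant N\leqslant k$, so that the exact cancellation of the smooth case persists after $f,g$ are replaced by their degree-$N$ Taylor polynomials. The reduction of the operator norm to the kernel estimate, and the estimate of the remainder itself, are then routine.
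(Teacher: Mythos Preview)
Your proposal is correct and follows essentially the same route as the paper: localize the kernel via the off-diagonal decay (Proposition~\ref{0t3.0}, Lemma~\ref{toet2.1}), Taylor-expand $f,g$ in normal coordinates, use that the coefficient of $p^{n-j/2}$ depends only on derivatives of order $\leqslant j$ so that the smooth-case cancellation \eqref{toe4.2} persists through order $N$, and convert the Gaussian kernel remainder to an operator-norm bound by a Schur estimate. The paper packages these steps slightly differently---it first isolates the single-operator kernel expansion (Lemma~\ref{toel2.3}, with the explicit formula \eqref{toe2.14} making manifest which derivatives enter $Q_{r,x_0}(f)$), then composes and invokes the polynomial identity \eqref{toe4.7b} to match coefficients, and finally applies its Schur-type Lemma~\ref{toet3.4}---but the content is the same, and your ``main obstacle'' is exactly what \eqref{toe2.14} and \eqref{toe4.7b} make precise.
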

%----------------------------------
In order to prove this theorem we need to develop some machinery 
from \cite{MM08b} concerning a criterion for a sequence of 
operators to be a (generalized) Toeplitz operator. For 
this purpose we refine the condition from \textbf{(vi)} in Section 1. 

Let $\Theta_p: L^2(X,L^p\otimes\bE)\longrightarrow 
L^2(X,L^p\otimes\bE)$ 
be a sequence of continuous linear  operators with smooth kernel 
$\Theta_p(\cdot,\cdot)$ with respect to $dv_X$.
Fix $k\in\N$ and $\var^\prime\in ]0,a^X[$\,. 
Let 
\[\big\{Q_{r,\,x_0}\in\End(\bE)_{x_0}[Z,Z^{\prime}]:
0\leqslant r\leqslant k,x_0\in X\big\}\]
be a family of polynomials in $Z,Z^ \prime$, such that 
$Q_{r,\,x_0}$ is of class $\cC^{k-r}$ with respect to
the parameter $x_0\in X$. 
We say that
\begin{equation} \label{toe2.73}
p^{-n} \Theta_{p,x_0}(Z,Z^\prime)\cong \sum_{r=0}^k
(Q_{r,\,x_0} \cP_{x_0})(\sqrt{p}Z,\sqrt{p}Z^{\prime})p^{-r/2}
+\mO(p^{-\frac{k+1}{2}})\,,
\end{equation}
on $\{(Z,Z^\prime)\in TX\times_X TX:\abs{Z},\abs{Z^{\prime}}
<\var^\prime\}$ 
if there exist a decomposition (\ref{toe2.71}) and (\ref{toe2.711})
holds for $m=m'=0$.

We say that
\begin{equation} \label{toe2.732}
p^{-n} \Theta_{p,x_0}(Z,Z^\prime)\cong \sum_{r=0}^k
(Q_{r,\,x_0} \cP_{x_0})(\sqrt{p}Z,\sqrt{p}Z^{\prime})p^{-r/2}
+\bo(p^{-\frac{k}{2}})\,,
\end{equation}
%on $\{(Z,Z^\prime)\in TX\times_X TX:\abs{Z},\abs{Z^{\prime}}
%<\var^\prime\}$ 
if there exist a decomposition \eqref{toe2.71}
where $R_{p,k,x_0}$ satisfies the following estimate: for any 
$\delta>0$, there exists $\varepsilon>0$, $C_{k}>0$, $M>0$ 
such that for all $(Z,Z^\prime)\in TX\times_X TX$ with 
$\abs{Z},\abs{Z^{\prime}}
<\var$ and $p\in\N^{*}$,
\begin{equation}
\label{toe2.7113}
\Big|R_{p,k,x_0}(Z,Z^\prime)\Big|_{\cC^0(X)}
\leqslant  \,\delta\,p^{-k/2}
(1+\sqrt{p}\,|Z|+\sqrt{p}\,|Z^{\prime}|)^M \,
e^{-C_0\,\sqrt{ p}\,|Z-Z^{\prime}|}\,.
\end{equation}
%-------------------------
We have the following analogue of \cite[Lemma\,4.6]{MM08b}.
%-------------------------
\begin{lemma} \label{toel2.3}
Let $f\in\cC^k(X,\End(E))$.
There exists a family 
\[
\big\{Q_{r,\,x_0}(f)\in\End(\bE)_{x_0}[Z,Z^{\prime}]:
0\leqslant r\leqslant k,x_0\in X\big\}
\] 
such that
%\begin{itemize}
%\item[(a)] 
\\[2pt]
(a) $Q_{r,\,x_0}(f)$
are polynomials with the same parity as $r$, 
%\item[(b)] 
\\[2pt]
(b) $Q_{r,\,x_0}(f)$ is of class $\cC^{k-r}$ with respect to
the parameter $x_0\in X$,
%\item[(c)] 
\\[2pt]
(c) There exists  $\varepsilon\in ]0, a^X/4[$
such that for any $m\in\{0,1,\ldots,k\}$, $x_0\in X$,
 $Z,Z^\prime \in T_{x_0}X$, $\abs{Z},\abs{Z^{\prime}}<\var/2$, 
 we have 
 %--------------------------------------------------------------------------
\begin{equation} \label{toe2.13}
p^{-n}T_{f,\,p,\,x_0}(Z,Z^{\prime})
\cong \sum^m_{r=0}(Q_{r,\,x_0}(f)\cP_{x_0})
(\sqrt{p}Z,\sqrt{p}Z^{\prime})
p^{-\frac{r}{2}} + \mR_{m,p}\,,
\end{equation}
%--------------------------------------------------------------------------
where
\[
\mR_{m,p}=
\begin{cases}
\mO(p^{-\frac{m+1}{2}})\,,&\text{if $m\leqslant k-1$}\,,\\
\bo(p^{-\frac{m}{2}})\,,&\text{if $m=k$}\,.
\end{cases}
\]
in the sense of \eqref{toe2.73} and \eqref{toe2.732}.
The coefficients $Q_{r,\,x_0}(f)$ are expressed by 
\begin{equation} \label{toe2.14}
Q_{r,\,x_0}(f) = \sum_{r_1+r_2+|\alpha|=r}
  \cK\Big[J_{r_1,\,x_0}\;,\; 
\frac{\partial ^\alpha f_{\,x_0}}{\partial Z^\alpha}(0) 
\frac{Z^\alpha}{\alpha !} J_{r_2,\,x_0}\Big]\,.
\end{equation}
%--------------------------------------------------------------------------
Especially,
\begin{align} \label{toe2.15}
Q_{0,\,x_0}(f)= f(x_0)\,I_{\C\otimes E} .
\end{align}
\end{lemma}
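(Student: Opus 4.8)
The statement is the $\cC^k$-symbol analogue of \cite[Lemma\,4.6]{MM08b}, so the plan is to run the same Taylor expansion argument as in the smooth case, keeping careful track of how many derivatives are used. The starting point is formula \eqref{toe2.5}, which writes $T_{f,p}(x,x')$ as a convolution of two Bergman kernels against the multiplication by $f$. I would fix $x_0\in X$, pass to the normal coordinates and trivializations of \textbf{(i)}--\textbf{(vi)}, and rescale by $\sqrt{p}$. On the ball $B(0,\var)$ the symbol $f_{x_0}$ is of class $\cC^k$, so Taylor's formula with remainder gives, for $Z$ in that ball,
\begin{equation*}
f_{x_0}(Z)=\sum_{|\alpha|\leqslant k-1}\frac{\partial^\alpha f_{x_0}}{\partial Z^\alpha}(0)\frac{Z^\alpha}{\alpha!}+\text{(degree-}k\text{ term with }\cC^0\text{ coefficient)}+r_{x_0}(Z),
\end{equation*}
where the genuine remainder $r_{x_0}(Z)$ is $o(|Z|^k)$ uniformly (and $O(|Z|^k)$ if one only assumes $\cC^k$ without the equicontinuity of the top derivative). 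Plugging this into \eqref{toe2.5} together with the off-diagonal Bergman expansion \eqref{toe1.9} from Theorem \ref{tue17}, and using \eqref{toe1.4} to recognize products of $\cP_{x_0}$-kernels, produces the expansion \eqref{toe2.13} with coefficients given by \eqref{toe2.14}: the operator $\cK[\,\cdot\,,\,\cdot\,]$ is exactly the "calculus" operation from \cite{MM08b} that composes two polynomial-times-$\cP_{x_0}$ kernels, and the sum over $r_1+r_2+|\alpha|=r$ records which power of $p^{-1/2}$ each factor contributes ($p^{-r_1/2}$ and $p^{-r_2/2}$ from the two Bergman kernels via \eqref{toe1.9}, and $p^{-|\alpha|/2}$ from the monomial $Z^\alpha=p^{-|\alpha|/2}(\sqrt{p}Z)^\alpha$ after rescaling).

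Once \eqref{toe2.14} is established, properties (a) and (b) are bookkeeping: the parity of $Q_{r,x_0}(f)$ follows from the parity of $J_{r_1,x_0}$, $J_{r_2,x_0}$ (same parity as $r_1$, $r_2$ by Theorem \ref{tue17}) combined with the parity of $Z^\alpha$, which together with $r_1+r_2+|\alpha|=r$ gives parity $r$; the smoothness class $\cC^{k-r}$ in $x_0$ is the minimum over the terms of the degree of differentiability of $J_{r_i,x_0}$ (which is $\cC^\infty$) and of $x_0\mapsto \partial^\alpha f_{x_0}(0)$, which is only $\cC^{k-|\alpha|}$, hence $\cC^{k-r}$ at worst. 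Formula \eqref{toe2.15} is immediate from $J_{0,x_0}=I_{\C\otimes E}$, $\cP_{x_0}^2=\cP_{x_0}$ and the fact that only $r_1=r_2=|\alpha|=0$ contributes at order zero, giving $\cK[I_{\C\otimes E},f(x_0)I_{\C\otimes E}]=f(x_0)I_{\C\otimes E}$.

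The main obstacle is the remainder estimate, i.e.\ showing that the leftover after subtracting the first $m+1$ terms is $\mO(p^{-(m+1)/2})$ for $m\leqslant k-1$ and $\bo(p^{-k/2})$ for $m=k$. Two sources of error must be controlled: the tail of the Bergman expansion \eqref{toe1.9}, whose remainder already obeys the Gaussian-decay bound \eqref{toe2.711}, and the Taylor remainder $r_{x_0}(Z)$ of $f$. For the latter, the key point is that the integral couples $r_{x_0}(Z)$ (which is $O(|Z|^k)$, resp.\ $o(|Z|^k)$) against two Bergman kernels that, after rescaling, decay like $e^{-C_0\sqrt{p}|Z|}$ times polynomial factors; bounding $|Z|^k$ by $p^{-k/2}(\sqrt{p}|Z|)^k$ and absorbing the polynomial into the Gaussian exactly as in the estimates \eqref{toe2.8ba}--\eqref{toe2.8c} and \eqref{toe2.10a} of the previous proofs yields the gain $p^{-k/2}$, with an extra $\delta$ (hence $\bo$) when the top derivative of $f$ is only equicontinuous. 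One also has to check that discarding the part of the integral over $d(x,x')>\var$ costs only $\mO(p^{-\infty})$, which follows from Lemma \ref{toet2.1} (\eqref{toe2.6b}), and that replacing $dv_X$ by $\kappa_{x_0}\,dv_{TX}$ introduces only the harmless factor $\kappa_{x_0}^{1/2}(Z)\kappa_{x_0}^{1/2}(Z')$ appearing in \eqref{toe2.71}, using \eqref{atoe2.8}. Assembling these pieces reproduces, verbatim up to the truncation level, the proof of \cite[Lemma\,4.6]{MM08b}, and I would simply refer to that argument for the parts that are identical.
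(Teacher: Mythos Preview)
Your proposal is correct and follows essentially the same approach as the paper: the paper's proof simply says to modify the argument of \cite[Lemma\,4.6]{MM08b} by replacing the smooth Taylor expansion of $f_{x_0}$ with the $\cC^k$ version, recording that the remainder is $O(|Z|^{m+1})$ for $m\leqslant k-1$ and $o(|Z|^m)$ for $m=k$, and then rescaling by $\sqrt{p}$ exactly as you describe. Your treatment of the parity and regularity of $Q_{r,x_0}(f)$, and of the two sources of error (Bergman-kernel tail and Taylor remainder), spells out details the paper leaves implicit but is otherwise the same argument.
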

%------------------------------------------------------------------
\begin{proof}
We just have to modify the proof of \cite[Lemma\,4.6]{MM08b}
in what concerns the Taylor formula for $f_{x_0}$:
%------------------------------------------------------------------------ 
\begin{equation*}%\label{toe2.18}
f_{x_0}(Z)=\sum_{|\alpha|\leqslant m}
\frac{\partial^\alpha f_{x_0}}{\partial Z^\alpha}(0)
\frac{ Z^\alpha}{\alpha!}+\cR_m(Z)\,,\quad \cR_{m}(Z)=
\begin{cases}
O(|Z|^{m+1})\,,&\text{if $m\leqslant k-1$}\,,\\
o(|Z|^{m})\,,&\text{if $m=k$}\,.
\end{cases}
\end{equation*}
thus
\begin{equation}\label{toe2.18}
f_{x_0}(Z)=\sum_{|\alpha|\leqslant m} p^{-|\alpha|/2}
\,\frac{\partial^\alpha f_{x_0}}{\partial Z^\alpha}(0)
\frac{ (\sqrt{p}Z)^\alpha}{\alpha!}
+\cR_{m,p}(Z)\,.
\end{equation}
where
\[
\cR_{m,p}(Z)=
\begin{cases}
p^{-\frac{m+1}{2}}O(|\sqrt{p}Z|^{m+1})\,,&\text{if 
$m\leqslant k-1$}\,,\\
o(p^{-\frac{m}{2}}) O(|\sqrt{p}Z|^{m})\,,&\text{if $m=k$}\,.
\end{cases}
\]
The last line just means that there exists $C>0$ such that for any 
$\delta>0$, there exists $\varepsilon>0$ such that for all
$|Z|\leqslant\varepsilon$ and all $p\in\N$, we have 
$|\cR_{m,p}(Z)|\leqslant C\delta p^{-\frac{m}{2}}|\sqrt{p}Z|^{m}$.
\end{proof}
%----------------------------------

%------------------------------------------------------------------------
\begin{lemma}\label{toet3.4}
    Let $\cT_p: L^2(X,L^p\otimes\bE)\longrightarrow 
L^2(X,L^p\otimes\bE)$ 
be a sequence of continuous linear  operators with smooth kernel 
$\cT_p(\cdot,\cdot)$ with respect to $dv_X$.
Assume that in the sense of (\ref{toe2.732}), 
\begin{align}\label{b6.34}
 p^{-n} \cT_{p,x_0} (0,Z^{\prime})
 \cong\bo(1)\,,\:p\longrightarrow\infty\,.  
\end{align}
 Then there exists 
$C>0$ such that for every $\delta>0$, there exists $p_0$ 
such that for every $p>p_0$ and
$s\in L^2(X,L^p\otimes\bE)$ we have
\begin{gather}
\label{b6.342}
\norm{\cT_p\,s}_{L^2}\leqslant C \delta\norm{s}_{L^2}\,,
\quad \|\cT_{p}^* s  \|_{L^2} \leqslant C \delta \|s\|_{L^2}\,.
\end{gather}
\end{lemma}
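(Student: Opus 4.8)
The plan is to estimate $\norm{\cT_p s}_{L^2}$ directly by writing it out via the kernel $\cT_p(x,y)$ and using the hypothesis \eqref{b6.34}, which provides (in the sense of \eqref{toe2.732} with $k=0$) a decomposition $p^{-n}\cT_{p,x_0}(0,Z')\kappa_{x_0}^{1/2}(0)\kappa_{x_0}^{1/2}(Z') = Q_{0,x_0}\cP_{x_0}(0,\sqrt{p}Z') + R_{p,0,x_0}(0,Z') + \mO(p^{-\infty})$ on a ball $|Z'|<\var$, where, since we are at $k=0$ and the expansion reduces to the remainder, the leading polynomial term is absent or absorbed, and $R_{p,0,x_0}$ satisfies: for every $\delta>0$ there is $\var>0$, $C_0'>0$, $M>0$ with $|R_{p,0,x_0}(0,Z')|\leqslant \delta\,(1+\sqrt p\,|Z'|)^M e^{-C_0\sqrt p\,|Z'|}$ for all $|Z'|<\var$ and all $p$. (I should double-check at the start whether the $\bo(1)$ hypothesis forces the $Q_{0,x_0}$ term to vanish or whether it too is $\delta$-small; in either case the pointwise bound on $p^{-n}\cT_{p,x_0}(0,Z')$ for $|Z'|$ small is $\leqslant C\delta\,p^n(1+\sqrt p|Z'|)^M e^{-C_0\sqrt p|Z'|}$.) Away from the diagonal, i.e.\ for $d(x,y)>\var$, the $\mO(p^{-\infty})$ part of \eqref{toe2.732} together with the fact that $\cT_p=P_p\cT_p P_p$-type localization gives a negligible contribution; more precisely, I would argue exactly as in Lemma \ref{toet2.1} that the far-off-diagonal part of $\cT_p(x,y)$ is $\mO(p^{-\infty})$ and contributes $\mO(p^{-\infty})\norm{s}_{L^2}$ by Cauchy--Schwarz on the compact manifold $X$.

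The main estimate is then a Schur test. Fixing the small $\var$ coming from $\delta$, I would bound
\[
\norm{\cT_p s}_{L^2}^2 = \int_X\Bigl|\int_X \cT_p(x,y)s(y)\,dv_X(y)\Bigr|^2 dv_X(x)
\]
by splitting the inner integral into $d(x,y)<\var$ and $d(x,y)\geqslant\var$. The second piece is $\mO(p^{-\infty})\norm{s}_{L^2}$ as above. For the first piece, I use the normal-coordinate description: with $x=x_0$ fixed and $y=\exp^X_{x_0}(Z')$, the integrand is controlled by $C\delta\,p^n(1+\sqrt p|Z'|)^M e^{-C_0\sqrt p|Z'|}|s(y)|$. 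The Schur test requires the two bounds $\sup_x\int_X |\cT_p(x,y)|\,dv_X(y)\leqslant C\delta$ and $\sup_y\int_X |\cT_p(x,y)|\,dv_X(x)\leqslant C\delta$; for the first, change variables $Y=\sqrt p\,Z'$ (so $dv_X(y)\sim p^{-n}\kappa_{x_0}(Z')\,dY$ by \eqref{atoe2.7}) to get
\[
\int_{|Z'|<\var} C\delta\,p^n(1+\sqrt p|Z'|)^M e^{-C_0\sqrt p|Z'|}\,dv_X(y)
\leqslant C\delta\int_{\R^{2n}}(1+|Y|)^M e^{-C_0|Y|}\,dY = C'\delta,
\]
uniformly in $x_0$ and $p$, since $X$ is compact. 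The transpose bound is identical using $\cT_p(x,y)^*$ and the symmetric role of $Z,Z'$ in \eqref{toe2.711}/\eqref{toe2.7113} (or, for $\cT_p^*$, applying the same argument to the kernel $\cT_p(y,x)^*$, whose $\bo(1)$-type bound follows by taking adjoints in \eqref{b6.34}). Schur's lemma then yields $\norm{\cT_p}\leqslant C'\delta$ and $\norm{\cT_p^*}\leqslant C'\delta$ once $p>p_0$, where $p_0$ absorbs the $\mO(p^{-\infty})$ corrections into, say, an extra $\delta$.

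The step I expect to be the main obstacle is handling the $\bo(1)$ hypothesis cleanly: unwinding Definition \eqref{toe2.732} when $k=0$ to see exactly what is being asserted about $\cT_{p,x_0}(0,Z')$ — in particular making sure the polynomial term $Q_{0,x_0}\cP_{x_0}$ is genuinely controlled (the definition only demands the \emph{remainder} $R_{p,0,x_0}$ be $\delta$-small, so one must check that $\bo(1)$ is meant to include the case where the whole expansion collapses to the remainder, i.e.\ there is no separate $Q_{0,x_0}$ term surviving). Once that bookkeeping is settled, the remaining work — the far-off-diagonal $\mO(p^{-\infty})$ bound and the Gaussian Schur estimate with the change of variables $Y=\sqrt p\,Z'$ — is routine and parallel to the proof of Lemma \ref{toet2.1} and the estimates in Theorem \ref{toet2.3}.
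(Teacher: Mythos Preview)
Your proposal is correct and follows essentially the same approach as the paper: the paper's proof is exactly the Schur test you describe, written out explicitly via Cauchy--Schwarz and Fubini, with the same near/far splitting and the same $Y=\sqrt{p}\,Z'$ rescaling to bound $\sup_x\int_X|\cT_p(x,y)|\,dv_X(y)$ and $\sup_y\int_X|\cT_p(x,y)|\,dv_X(x)$ by $C\delta$. Regarding your bookkeeping concern, the notation $\cong\bo(1)$ in \eqref{b6.34} is indeed meant with the polynomial sum empty, so the entire local kernel is the remainder $R_{p,0,x_0}$ satisfying \eqref{toe2.7113} with $k=0$; once you accept this convention there is nothing further to check.
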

%--------------------------------------------------------------------
\begin{proof}
By the Cauchy-Schwarz inequality we have
\begin{equation}\label{b6.381}
\big\|\cT_p\,s\big\|_{L^2} ^2
\leqslant \int_{X} \Big(\int_{X}
\big|\cT_p(x,y)\big| dv_{X}(y)\Big)
 \Big(\int_{X}
\big|\cT_p(x,y)\big| |s(y)|^2
dv_{X}(y)\Big)dv_{X}(x)\,.
\end{equation}
We split then the inner integrals into integrals over 
$B^X(x,\var^\prime)$ 
and $X\setminus B^X(x,\var^\prime)$ and use the fact that 
the kernel of $\cT_p$
has the growth $\mO(p^{-\infty})$ outside the diagonal. 
%Indeed, this follows by \eqref{toe3.1}, the definition
%of the operators $F^{(i)}P_{p}$ in \eqref{b6.33} 
%(using the cut-off function $\eta$). 
By \eqref{b6.34}, there exists $C'>0$ such that for every 
$\delta>0$, there exists $p_0$ such that for every $p>p_0$ 
and $x\in X$, 
%\begin{equation*}
%\int_{B^X(x,\var^\prime)}\big|\cT_p(x,y)\big| dv_{X}(y)
%\leqslant C'\delta\,,\end{equation*} hence
\begin{align}\label{b6.383}
    \begin{split}
\int_{X}\big|\cT_p(y,x)\big| dv_{X}(y)&
\leqslant \int_{B^X(y,\var^\prime)} C p^n \delta 
(1+\sqrt{p}d(y,x))^M e^{-C_{0}\sqrt{p}d(y,x)} dv_{X}(y)
+ O(p^{-\infty})\\
&=O(1)\delta+ O(p^{-\infty}),\\	
\int_{X}\big|\cT_p(x,y)\big| dv_{X}(y)&
\leqslant C\delta\, .
\end{split}\end{align}
%for some other constant $C>C'$.
Combining \eqref{b6.381} and \eqref{b6.383} and Fubini's theorem 
we obtain
\begin{equation}\label{b6.385}
\begin{split}
\big\|\cT_p\,s\big\|_{L^2} ^2
&\leqslant   C\delta \int_X\Big(\int_{X}\big|\cT_p(x,y)\big| 
|s(y)|^2 dv_{X}(y)\Big)dv_{X}(x)   \\
&=  C\delta\int_X\Big(\int_{X}\big|\cT_p(x,y)\big|  
dv_{X}(x)\Big)|s(y)|^2dv_{X}(y)\\
&\leqslant (C\delta)^2\int_X|s(y)|^2dv_{X}(y).
\end{split}
\end{equation}
This proves the first estimate of \eqref{b6.342}. 
The second one follows by taking the adjoint. 
The proof of Lemma \ref{toet3.4} is completed. 
\end{proof}

%----------------------------------
\begin{proof}[Proof of Theorem \ref{toet2.4}]
Firstly, it is obvious that $P_p\,T_{f,\,p}\,T_{g,\,p}\,P_p
=T_{f,\,p}\,T_{g,\,p}$. 
Lemmas \ref{toet2.1} and \ref{toel2.3} imply
%$T_{f,\,p}\,T_{g,\,p}$ verifies \eqref{toe3.1}. We have
that for $Z,Z^\prime \in T_{x_0}X$, 
$\abs{Z},\abs{Z^{\prime}}<\var/4$:
%----------------------------------
\begin{multline}\label{toe4.5}
(T_{f,\,p}\circ T_{g,\,p})_{x_0}(Z,Z^\prime)= \int_{{ T_{x_0}X}}
T_{f,\,p,\,x_0}(Z,Z^{\prime\prime})
\rho\Big(\frac{4|Z^{\prime\prime}|}{\var}\Big)
T_{g,\,p,\,x_0}(Z^{\prime\prime},Z^{\prime}) 
\kappa_{x_0}(Z^{\prime\prime})
\,dv_{TX}(Z^{\prime\prime})\\+ \cO(p^{-\infty}).
\end{multline} 
%----------------------------------
\par \textbf{Case $k=0$.} 
By \eqref{toe4.5}, we deduce as in the proof of 
Lemma \ref{toel2.3}, that for $Z,Z^\prime \in T_{x_0}X$, 
$\abs{Z},\abs{Z^{\prime}}<\var/4$, we have
\begin{align} \label{toe4.6}
p^{-n}(T_{f,\,p}\circ T_{g,\,p})_{x_0}(Z,Z^\prime)\cong 
(Q_{0,\,x_0}(f,g)\cP_{x_0})(\sqrt{p}\,Z,\sqrt{p}\,Z^{\prime}) 
+ \bo(1),
\end{align}
where 
\begin{align} \label{toe4.7}
Q_{0,\,x_0}(f,g)=  \cK[Q_{0,\,x_0}(f), Q_{0,\,x_0}(g)]
=f(x_0)g(x_0)\,.
\end{align}
We conclude by Lemma \ref{toet3.4} that 
$T_{f,\,p}\circ T_{g,\,p}-T_{fg,p}=o(1)$, as $p\to\infty$.
\smallskip
\par \textbf{Case $k=1$.} 
By \eqref{toe4.5} and the Taylor formula \eqref{toe2.18} for 
$m=k=1$ we deduce as in the proof of 
Lemma \ref{toel2.3} an estimate analogous to \eqref{toe4.6} 
with $\bo(1)$ replaced by $\bo(p^{-1/2})$, so we obtain 
$T_{f,\,p}\circ T_{g,\,p}-T_{fg,p}=o(p^{-1/2})$, as $p\to\infty$.

\smallskip
\par \textbf{Case $k\geqslant2$.} 
We obtain now that for $Z,Z^\prime \in T_{x_0}X$, 
$\abs{Z},\abs{Z^{\prime}}<\var/4$ and for every 
$m\in\{0,1,\ldots,k\}$ we have
\begin{align} \label{toe4.6a}
p^{-n}(T_{f,\,p}\circ T_{g,\,p})_{x_0}(Z,Z^\prime)\cong 
\sum^m_{r=0}(Q_{r,\,x_0}(f,g)\cP_{x_0})
(\sqrt{p}\,Z,\sqrt{p}\,Z^{\prime})
p^{-\frac{r}{2}} + \mR_{m,p},
\end{align}
where
\[
\mR_{m,p}=
\begin{cases}
\mO(p^{-\frac{m+1}{2}})\,,&\text{if $m\leqslant k-1$}\,,\\
\bo(p^{-\frac{m}{2}})\,,&\text{if $m=k$}\,,
\end{cases}
\]
in the sense of \eqref{toe2.73} and \eqref{toe2.732}. 

Note that for $f,g\in\cC^{\infty}(X,\End(E))$, by 
Lemma \ref{toel2.3} and (\ref{toe4.2}),
we know that 
\begin{align}\label{toe4.7b}
\sum_{r=0}^{\lfloor l/2\rfloor}Q_{l-2r}(C_{r}(f,g))
= Q_{l}(f,g).
\end{align}
As $C_r$ are bidifferential operators with smooth coefficients 
of total degree $2r$ defined in (\ref{toe4.2}), thus for 
 $f,g\in\cC^{k}(X,\End(E))$,
 (\ref{toe4.7b}) still holds for $l\leqslant k$.
 Lemma \ref{toet3.4} and (\ref{toe4.7b}) implies that 
Theorem \ref{toet2.4} holds.
%Theorem \ref{toet3.1} implies that there exist 
%$g_{\ell}=C_\ell(f,g)\in\cC^{k-\ell}(X)$, 
%$\ell\in\{0,1,\ldots,\lfloor k/2\rfloor \}$, such that
%\eqref{toe2.10b0} and \eqref{toe2.10c0} hold. 
%This finishes the proof of Theorem \ref{toet2.4}.
\end{proof}
%----------------------------------
\begin{corollary}\label{toec2.4}
Let $f,g\in\cC^{k}(X,\End(E))$, $k\in\N$. Then
as $p\to\infty$, in the operator norm sense, we have
%--------------------
\begin{equation}\label{toe2.10d}
T_{f,\,p}\,\circ T_{g,\,p}=T_{fg,p}+R_{0,p}\,,\:\:
R_{0,p}=\begin{cases}
o(1)\,,&\quad\text{for $k=0$}\,,\\
o(p^{-1/2})\,,&\quad\text{for $k=1$}\,.
%O(p^{-1})\,,&\quad\text{for $k=2$}\,.
\end{cases}
\end{equation}
%--------------------
%Let $f,g\in\cC^{2}(X,\End(E))$, 
If $k\geqslant2$, then
\begin{equation}\label{toe2.10b}
T_{f,\,p}\,\circ T_{g,\,p}=T_{fg,p}+p^{-1}T_{C_1(f,g),p}+R_p\,,\:\:
R_{p}=\begin{cases}
o(p^{-1})\,,&\quad\text{for $k=2$}\,,\\
o(p^{-3/2})\,,&\quad\text{for $k=3$}\,,\\
O(p^{-2})\,,&\quad\text{for $k=4$}\,.
\end{cases}
\end{equation}
\end{corollary}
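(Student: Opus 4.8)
The plan is to obtain \eqref{toe2.10d} and \eqref{toe2.10b} by specializing Theorem \ref{toet2.4}: for each admissible value of $k$ one picks the truncation index $m\in\{0,\ldots,\lfloor k/2\rfloor\}$ in \eqref{toe2.10b0} that produces exactly the number of terms displayed in the corollary, and then reads off the size of the remainder from \eqref{toe2.10c0}. The only additional input is the normalization $C_0(f,g)=fg$ recorded after \eqref{toe4.2}, which turns $T_{C_0(f,g),p}$ into $T_{fg,p}$.

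Concretely, for $k\in\{0,1\}$ I would take $m=\lfloor k/2\rfloor=0$, so that \eqref{toe2.10b0} reads $T_{f,p}\circ T_{g,p}=T_{fg,p}+R_{0,p}$; since here $m=\lfloor k/2\rfloor$, the first branch of \eqref{toe2.10c0} gives $R_{0,p}=o(p^{-k/2})$, which is $o(1)$ when $k=0$ and $o(p^{-1/2})$ when $k=1$, yielding \eqref{toe2.10d}. For $k\in\{2,3\}$ I would take $m=\lfloor k/2\rfloor=1$, so \eqref{toe2.10b0} becomes $T_{f,p}\circ T_{g,p}=T_{fg,p}+p^{-1}T_{C_1(f,g),p}+R_{1,p}$ with, again by the first branch of \eqref{toe2.10c0}, $R_{1,p}=o(p^{-k/2})$, namely $o(p^{-1})$ for $k=2$ and $o(p^{-3/2})$ for $k=3$.

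Finally, for $k=4$ I would keep the same two-term expansion but now take $m=1<2=\lfloor k/2\rfloor$; then the second branch of \eqref{toe2.10c0} applies and gives $R_{1,p}=O(p^{-m-1})=O(p^{-2})$, which is the last line of \eqref{toe2.10b}. There is essentially no obstacle here beyond matching the floor function to the number of listed terms: the genuine work has already been done in Theorem \ref{toet2.4} and in the Toeplitz-criterion lemmas (Lemmas \ref{toel2.3} and \ref{toet3.4}) on which it rests. If desired, the same scheme produces for general $k\geqslant 2$ and $m<\lfloor k/2\rfloor$ the intermediate statements $T_{f,p}\circ T_{g,p}=\sum_{r=0}^m p^{-r}T_{C_r(f,g),p}+O(p^{-m-1})$.
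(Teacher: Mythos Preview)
Your proposal is correct and is precisely the intended derivation: the paper states Corollary \ref{toec2.4} without proof, as an immediate specialization of Theorem \ref{toet2.4} together with $C_0(f,g)=fg$. Your case-by-case choice of $m$ and reading of the two branches of \eqref{toe2.10c0} is exactly right.
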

%----------------------------------

%----------------------------------
By (\ref{toe4.2b}) and (\ref{toe2.10b}), we get
\begin{corollary}
Let $f,g\in\cC^{2}(X)$. Then the commutator of the operators 
$T_{f,\,p}$\,, $T_{g,\,p}$ satisfies
\begin{equation}\label{toe4.4}
\big[T_{f,\,p}\,,T_{g,\,p}\big]=\frac{\sqrt{-1}}{\, p}T_{\{f,g\},\,p}
+R_p\,,\:p\to\infty\,,
\end{equation} 
where $\{ \,\cdot\, , \,\cdot\, \}$ is the Poisson bracket on 
$(X,2\pi \om)$ and $R_p$ satisfies the 
estimates from \eqref{toe2.10b}.
\end{corollary}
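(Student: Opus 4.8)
The plan is to read the result off directly from Corollary \ref{toec2.4}. Since $f,g\in\cC^{2}(X)$ we are in the case $k=2$ of \eqref{toe2.10b}, which, applied to both orderings of the product, gives in the operator norm
\begin{equation*}
T_{f,\,p}\circ T_{g,\,p}=T_{fg,p}+p^{-1}T_{C_1(f,g),p}+R_p^{(1)}\,,\qquad
T_{g,\,p}\circ T_{f,\,p}=T_{gf,p}+p^{-1}T_{C_1(g,f),p}+R_p^{(2)}\,,
\end{equation*}
with $R_p^{(1)},R_p^{(2)}=o(p^{-1})$.

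First I would subtract these two identities. Because $f$ and $g$ are scalar-valued, $fg=gf$ as endomorphism-valued symbols, so $T_{fg,p}=T_{gf,p}$ and the leading terms cancel. As the assignment $h\mapsto T_{h,p}$ is linear in the symbol, the difference of the subleading terms equals $p^{-1}T_{C_1(f,g)-C_1(g,f),p}$. Next I would invoke the antisymmetrization formula \eqref{toe4.2b}, namely $C_1(f,g)-C_1(g,f)=\sqrt{-1}\,\{f,g\}\,\Id_E$ with $\{\,\cdot\,,\,\cdot\,\}$ the Poisson bracket on $(X,2\pi\om)$, which turns this term into $\frac{\sqrt{-1}}{p}T_{\{f,g\},p}$. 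What is left is $R_p:=R_p^{(1)}-R_p^{(2)}=o(p^{-1})$, i.e.\ it satisfies the $k=2$ bound in \eqref{toe2.10b}; if moreover $f,g\in\cC^k(X)$ with $k=3$ or $k=4$, the two remainders inherit the corresponding sharper estimates of \eqref{toe2.10b} and the same subtraction argument applies verbatim.

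There is essentially no obstacle here: the only thing to verify is that the algebraic identity \eqref{toe4.2b}, originally recorded for smooth symbols, persists for $\cC^{2}$ symbols. This is immediate, because $C_1$ is a fixed bidifferential operator of total degree $\leqslant 2$ with smooth coefficients, independent of $f$ and $g$; hence $C_1(f,g)-C_1(g,f)=\sqrt{-1}\,\{f,g\}\,\Id_E$ is a pointwise identity between continuous sections valid for any $f,g\in\cC^{2}(X)$, and \eqref{toe2.10b} already supplies the operator-norm expansion into which this identity is fed. No further estimates are needed; the corollary follows.
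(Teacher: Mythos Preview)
Your argument is correct and is exactly the one-line derivation the paper gives (``By \eqref{toe4.2b} and \eqref{toe2.10b}''), just spelled out in full: apply \eqref{toe2.10b} to both orderings, subtract using $fg=gf$, and identify the $p^{-1}$ term via \eqref{toe4.2b}. Your remark that \eqref{toe4.2b} extends from $\cC^\infty$ to $\cC^2$ symbols because $C_1$ is a fixed bidifferential operator of total order $\leqslant 2$ is the only point that needed a word of justification, and you handled it correctly.
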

%----------------------------------
The Poisson bracket 
$\{ \,\cdot\, , \,\cdot\, \}$
on $(X,2\pi \om)$ is defined as follows.  For $f, g\in \cC^2 (X)$,
let $\xi_{f}$ be the Hamiltonian vector field generated by $f$, 
which is defined by $2 \pi i_{\xi_{f}}\om=df$. Then 
$\{f, g\}:= \xi_{f}(dg)$.

%----------------------------------
\section{Asymptotics of the norm of Toeplitz operators}\label{s4}
%----------------------------------
For $f\in L^{\infty}(X,\End(E))$ we denote the essential 
supremum of $f$ by 
\[{\norm f}_\infty
=\operatorname{ess\,sup}_{x\in X}|f(x)|_{\End(E)}\,.\]
Note that the operator norm $\|T_{f,p}\|$ of $T_{f,p}$ satisfies
\begin{equation}
\norm{T_{f,p}}\leqslant{\norm f}_\infty\,.
\end{equation}
\begin{theorem}\label{toet4.3}
Let $(X,\om)$ be a compact symplectic manifold and let 
$(L,h^L,\nabla^L)\to X$ be a prequantum line bundle 
satisfying \eqref{prequantum}.
Let $(E,h^E,\nabla^E)\to X$ be a twisting Hermitian vector bundle. 
Let $f\in L^{\infty}(X,\End(E))$ and assume that there exists 
$x_0\in X$ such that ${\norm f}_\infty=|f(x_0)|_{\End(E)}$ 
and $f$ is continuous in $x_0$.
Then the norm of $T_{f,\,p}$ satisfies
\begin{equation}\label{toe4.17}
\lim_{p\to\infty}\norm{T_{f,\,p}}={\norm f}_\infty\,.
\end{equation}
\end{theorem}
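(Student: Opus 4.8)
The plan is to prove the two inequalities $\limsup_{p\to\infty}\|T_{f,p}\|\leqslant\|f\|_\infty$ and $\liminf_{p\to\infty}\|T_{f,p}\|\geqslant\|f\|_\infty$ separately. The first is the trivial bound recorded just before the statement, $\|T_{f,p}\|=\|P_pfP_p\|\leqslant\|f\|_\infty$, so the whole content lies in the lower bound, which I obtain by testing $T_{f,p}$ against coherent states localized at $x_0$. Assume $\|f\|_\infty>0$ (otherwise there is nothing to prove). Choose a unit vector $v_0\in E_{x_0}$ with $|f(x_0)v_0|_{E_{x_0}}=|f(x_0)|_{\End(E)}=\|f\|_\infty$ and set $w_0:=\|f\|_\infty^{-1}f(x_0)v_0$, a unit vector with $\langle f(x_0)v_0,w_0\rangle=\|f\|_\infty$. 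Fix a unit vector $\ell_0\in L_{x_0}$ and put $\xi_0:=\ell_0^{\otimes p}\otimes(1\otimes v_0)$, $\eta_0:=\ell_0^{\otimes p}\otimes(1\otimes w_0)$, viewed as unit vectors lying in the $\C\otimes E$ summand of $(L^p\otimes\bE)_{x_0}$. Define the coherent states $S_p:=P_p(\,\cdot\,,x_0)\xi_0$ and $U_p:=P_p(\,\cdot\,,x_0)\eta_0$, which lie in $\Ker(D_p)$.

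From the reproducing property of the Bergman kernel, $\|S_p\|_{L^2}^2=\langle P_p(x_0,x_0)\xi_0,\xi_0\rangle$ and likewise for $U_p$, so the diagonal expansion \eqref{toe1.9b} together with \eqref{toe2.712} gives $\|S_p\|_{L^2}^2=\|U_p\|_{L^2}^2={\det}_{\C}|\bJ_{x_0}|\,p^n+O(p^{n-1})$. Since $T_{f,p}=P_pfP_p$ and $P_pS_p=S_p$, $P_pU_p=U_p$, we have $\langle T_{f,p}S_p,U_p\rangle=\int_X\langle f(x)S_p(x),U_p(x)\rangle\,dv_X(x)$, and from $\|T_{f,p}\|\geqslant|\langle T_{f,p}S_p,U_p\rangle|/(\|S_p\|_{L^2}\|U_p\|_{L^2})$ the lower bound will follow once the integral is shown to be $\|f\|_\infty\,{\det}_{\C}|\bJ_{x_0}|\,p^n+o(p^n)$. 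To evaluate it I localize and expand. By Proposition \ref{0t3.0}, $\|S_p\|_{L^2(X\setminus B(x_0,\varepsilon))}$ and $\|U_p\|_{L^2(X\setminus B(x_0,\varepsilon))}$ are $O(p^{-\infty})$, so by Cauchy--Schwarz, using $\|fS_p\|_{L^2}\leqslant\|f\|_\infty\|S_p\|_{L^2}=O(p^{n/2})$, the part of the integral over $X\setminus B(x_0,\varepsilon)$ is $O(p^{-\infty})$. On $B(x_0,\varepsilon)$ I use the trivializations of Section 1 and the off-diagonal expansion of Theorem \ref{tue17}: since $J_{0,x_0}=I_{\C\otimes E}$, we have $P_{p,x_0}(Z,0)\xi_0=\kappa_{x_0}^{-1/2}(Z)\big(p^n\cP_{x_0}(\sqrt p Z,0)\xi_0+p^{n-1/2}R_{p,x_0}(Z)\big)$ with $R_{p,x_0}$ satisfying the Gaussian bound $(1+\sqrt p\,|Z|)^Me^{-C_0\sqrt p\,|Z|}$, and similarly for $\eta_0$. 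Plugging this in, the factors $\kappa_{x_0}^{-1/2}(Z)\kappa_{x_0}^{-1/2}(Z)$ cancel against $dv_X(Z)=\kappa_{x_0}(Z)\,dv_{TX}(Z)$, and after the change of variables $Y=\sqrt p Z$ all the $R_{p,x_0}$-contributions are $O(p^{n-1/2})$. The surviving leading term is $\langle f(x_0)\xi_0,\eta_0\rangle\,p^{2n}\int|\cP_{x_0}(\sqrt p Z,0)|^2\,dv_{TX}(Z)=\langle f(x_0)v_0,w_0\rangle\,p^n\cP_{x_0}(0,0)$, where I used that $\cP_{x_0}$ is the kernel of a self-adjoint projection, hence $\cP_{x_0}(0,Y)=\overline{\cP_{x_0}(Y,0)}$, so that \eqref{toe1.4} gives $\int|\cP_{x_0}(Y,0)|^2\,dv(Y)=\cP_{x_0}(0,0)={\det}_{\C}|\bJ_{x_0}|$ by \eqref{toe1.3}. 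Finally, replacing $f(x)$ by the parallel transport of $f(x_0)$ costs, by the continuity of $f$ at $x_0$: given $\delta>0$ choose $\varepsilon$ so that $|f(x)-f(x_0)|\leqslant\delta$ on $B(x_0,\varepsilon)$, whence that error is bounded by $\delta\int_{B(x_0,\varepsilon)}|S_p(x)|\,|U_p(x)|\,dv_X(x)\leqslant\delta\|S_p\|_{L^2}\|U_p\|_{L^2}=O(\delta p^n)$.

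Putting the three estimates together, $|\langle T_{f,p}S_p,U_p\rangle|\geqslant(\|f\|_\infty-C\delta)\,{\det}_{\C}|\bJ_{x_0}|\,p^n+O(p^{n-1/2})$, so $\liminf_{p\to\infty}\|T_{f,p}\|\geqslant\|f\|_\infty-C\delta$ for every $\delta>0$; letting $\delta\to0$ and combining with the upper bound yields \eqref{toe4.17}. The one genuinely delicate point is the bookkeeping of the error terms in the localized integral: one must check that the subleading Bergman coefficients, the factor $\kappa_{x_0}$, and the tail outside $B(x_0,\varepsilon)$ all contribute $o(p^n)$ uniformly, so that the continuity modulus $\delta$ of $f$ at the single point $x_0$ is really all that controls the remainder. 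The structural choice that makes the leading term come out right is the pair of coherent states built from the singular vectors $v_0$ and $w_0=f(x_0)v_0/\|f\|_\infty$ of the (in general non-self-adjoint) endomorphism $f(x_0)$; this is what lets the pairing detect the operator norm $|f(x_0)|_{\End(E)}$ rather than a smaller diagonal quantity, and it is the natural adaptation to the $\End(E)$-valued setting of the scalar argument where $v_0=w_0=1$.
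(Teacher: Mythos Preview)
Your proof is correct and takes a genuinely different route from the paper's. The paper builds explicit peak sections $S^p$ by projecting a truncated Gaussian onto $\Ker(D_p)$ and then invests substantial effort (Lemma~\ref{lema_peak} and Lemmas~\ref{toe4.l3}--\ref{toe4.l5}) in establishing their pointwise asymptotics and proving $\|T_{f,p}S^p-\rho f(x_0)S^p\|_{L^2}\leqslant C\delta\|S^p\|_{L^2}$, from which the lower bound follows via $\|T_{f,p}\|\geqslant\|T_{f,p}S^p\|/\|S^p\|$. You instead use coherent states---columns $P_p(\,\cdot\,,x_0)\xi_0$ of the Bergman kernel---whose $L^2$ norm follows in one line from the reproducing property and the diagonal expansion \eqref{toe1.9b}, bypassing the peak-section machinery entirely. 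Your use of a \emph{pair} of coherent states, built from the singular vectors $v_0$ and $w_0=f(x_0)v_0/\|f\|_\infty$ of $f(x_0)$, together with the bilinear estimate $\|T_{f,p}\|\geqslant|\langle T_{f,p}S_p,U_p\rangle|/(\|S_p\|\|U_p\|)$, is a clean way to detect the operator norm of a non-self-adjoint endomorphism; the paper achieves the same end with a single section built from a vector $u_0$ realizing $|f(x_0)u_0|=\|f\|_\infty$ and the estimate $\|T_{f,p}S^p\|\approx\|f(x_0)S^p\|\approx\|f\|_\infty$. Your argument is more economical, needing only the Bergman-kernel expansions already recorded in Section~\ref{s1}; the paper's construction, on the other hand, yields finer information about the sections themselves, which feeds into the quantitative refinements in the Remark following the theorem.
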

\begin{proof}
By hypothesis there exist $x_0\in X$ and $u_0\in E_{x_0}$, 
with $|u_0|_{h^{E}}=1$, such that $|f(x_0)(u_0)|={\norm f}_\infty$.
Let us trivialize the bundles $L$, $E$
in normal coordinates over a neighborhood $U$ of $x_0$,
and let $e_L$ be the unit frame of $L$ which trivialize $L$.
In these normal coordinates, we take the parallel transport of 
$u_0$ and obtain a nowhere vanishing section 
$e_E$ of $E$ over $U$.
Denote by $f(x_0)\in\End(E)|_U$ the endomorphism obtained 
by parallel transport of $f(x_0)\in\End(E_{x_0})$.

Let $\delta>0$ be fixed. Since $f$ is continuous, there exists 
$\varepsilon>0$ such that for all $x\in B(x_0,2\varepsilon)$, 
$u\in E_x$, with $|u|_{h^{E}}=1$, we have
\begin{align}\label{eq:4.18}
|f(x)u-f(x_0)u|\leqslant\delta.
\end{align}
Since $x_0$ is fixed let us denote for simplicity $a_i=a_i(x_0)$ 
and set 
\begin{align*}\label{e:peak5}
\begin{split}
&|Z|_a:=\frac12\Big(\sum_{i=1}^na_i|z_i|^2\Big)^{\frac12}\,,
\\
&\rho\in\cC^\infty(X)\,,\:\supp\rho\subset 
B(x_0,\varepsilon)\,,\:\text{$\rho=1$ on $B(x_0,\varepsilon/2)$}.
\end{split}\end{align*}
Define sections
\begin{equation}\label{e:peak}
S^p=S^p_{x_0,\,u_0}=p^{\frac n2}
\sqrt{{\det}_{\C}|\bJ_{x_0}|}
\,P_p\big(\rho e^{-p |Z|_a^2} e_L^{\otimes p}\otimes e_E\big)\in\Ker(D_p)\,.
\end{equation}
Our goal is to prove the following.
\begin{proposition}\label{toe4p}
There exists $C>0$ (independent of $\delta$) such that 
for $p\gg1$,
\begin{equation}
\big\|T_{f,p}S^p-\rho f(x_0)S^p\big\|_{L^2}
\leqslant C\delta\norm{S^p}_{L^2} \,.\label{toe4.18a}
\end{equation}
Moreover, for $p\to\infty$,
\begin{equation}
\big\|\rho f(x_0)S^p\big\|_{L^2}=\|f\|_\infty+O(p^{-\frac12})\,.
\label{toe4.18ab}
\end{equation}
\end{proposition}
We start by showing that $S^p$ are 
\emph{peak sections}\/, i.\,e., satisfies the properties in 
Lemma \ref{lema_peak} below. 
%---------------------------------------
\begin{lemma}\label{lema_peak}
The following expansions hold as $p\to\infty$\,,
\begin{equation}\label{toe4.18g5}
\begin{split}
S^p(Z)=&\,p^{\frac n2}\sqrt{{\det}_{\C}|\bJ_{x_0}|}\,e^{-p|Z|_a^2}
\bigg(1+\sum_{r=1}^kQ_r(\sqrt{p}\,Z)p^{-\frac r2}\bigg)u_0\\
&+\,O\left(p^{\frac{n-k-1}{2}}e^{-C_0\,\sqrt{ p}\,|Z|}
\left(1+\sqrt{p}\,|Z|\right)^{2M}\right)+O(p^{-\infty})\,,
\quad Z\in B(0,\,\varepsilon/2)
\end{split}
\end{equation}
for some constants $C_0,M>0$ and polynomials $Q_r$
with values in $\End(\bE_{x_{0}})$,
\begin{gather}
S^p=O(p^{-\infty})\,,\:\:\text{uniformly on any compact set $K$ 
such that $x_0\not\in K$}\label{toe4.17a}\,,\\
\|S^p\|^2_{L^2}=\int_X|S^p|^2dv_X=1+O(p^{-1})\,.\label{toe4.17b}
\end{gather}
\end{lemma}
%---------------------------------------
\begin{proof} 
By (\ref{e:peak}), we have for $x\in X$,
\begin{equation}\label{e:est_peak}
\begin{split}
S^p(x)%&=2^{-n}p^{\frac n2}\sqrt{\det|\bJ_{x_0}|}\int_XP_p(x,x')
%(\rho e_L^{\otimes p}\otimes e_E)(x')\,dv_X(x')\\
&=p^{\frac n2}\sqrt{{\det}_{\C}|\bJ_{x_0}|}
\int_{B(x_0,\varepsilon)}P_p(x,x')(\rho  e^{-p |Z|_a^2} e_L^{\otimes p}
\otimes e_E)(x')\,dv_X(x')\,.
\end{split}
\end{equation}
%--------------------------------------
We deduce from \eqref{1c3} that for $p\to\infty$,
\begin{equation}\label{e:est_peak1}
S^p(x)=O(p^{-\infty})\,,\:\:\text{uniformly on 
$X\setminus B(x_0,\,2\varepsilon)$}.
\end{equation}
%--------------------------------------
For $Z\in B(0,2\varepsilon)$, by (\ref{atoe2.7}) and 
(\ref{e:est_peak}), we have
%--------------------------------------
\begin{equation}\label{e:est_peak2}
\begin{split}
S^p(Z)&=p^{\frac n2}\sqrt{{\det}_{\C}|\bJ_{x_0}|}
\int_{B(0,\varepsilon)}P_{p,x_0}(Z,Z')\widetilde{\kappa}(Z')  e^{-p |Z'|_a^2} u_0\,dZ'
\,,
\end{split}
\end{equation}
%--------------------------------------
where we have denoted 
$\widetilde{\kappa}=\rho\kappa_{x_{0}}$\,. 
We wish to obtain an expansion of $S^p$ in powers of $p$, 
so we apply Theorem \ref{tue17}.
By \eqref{toe1.9} (see \eqref{toe2.71}) we have
\begin{equation}\label{toe1.9d}
P_{p,x_0}(Z,Z')\kappa_{x_0}^{\frac{1}{2}}(Z)
\kappa_{x_0}^{\frac{1}{2}}(Z')=\sum_{r=0}^k
(J_{r,\,x_0} \cP_{x_0})\,(\sqrt{p}Z,\sqrt{p}Z^{\prime})
p^{n-\frac{r}{2}}
+R_{p,k, x_0}(Z,Z')+O(p^{-\infty})\,.
\end{equation}

By (\ref{atoe2.8}), we write the Taylor expansion of 
$\varphi(Z,Z')=\kappa_{x_0}^{-\frac{1}{2}}(Z)
\kappa_{x_0}^{-\frac{1}{2}}(Z')\widetilde{\kappa}(Z')$ in the form
%--------------------------------------
\begin{equation}\label{toe1.9e}
\begin{split}
&\varphi(Z,Z')=1+\sum_{1<|\alpha|+|\beta|\leqslant k}
\partial^\alpha_Z\partial^\beta_{Z'}\varphi(0,0)
\frac{Z^\alpha}{\alpha!}\frac{Z'^\beta}{\beta!}+O(|(Z,Z')|^{k+1})\\
&=1+\sum_{1<|\alpha|+|\beta|\leqslant k}p^{-(|\alpha|+|\beta|)/2}\,
\partial^\alpha_Z\partial^\beta_{Z'}\varphi(0,0)
\frac{(\sqrt{p}Z)^\alpha}{\alpha!}\frac{(\sqrt{p}Z')^\beta}{\beta!}
+p^{-(k+1)/2}O(|\sqrt{p}(Z,Z')|^{k+1})
\end{split}
\end{equation}
%--------------------------------------
and multiply it with the right-hand side of \eqref{toe1.9d}. 
We obtain in this way an expansion in powers of $p^{1/2}$ 
of $P_{p,x_0}(Z,Z')\widetilde\kappa(Z')$: 
%--------------------------------------
\begin{equation} \label{toe1.9c}
P_{p,\,x_0}(Z,Z^\prime)\widetilde\kappa(Z')=\sum_{r=0}^k
(\widetilde{J}_{r,\,x_0} \cP_{x_0})\,(\sqrt{p}Z,\sqrt{p}Z^{\prime})
p^{n-\frac{r}{2}}
+p^n\widetilde{R}_{p,k}(Z,Z')\,,
\end{equation}
for some polynomials $\widetilde{J}_{r,\,x_0}
\in\End (\bE_{x_0})[Z,Z^{\prime}]$, 
$\widetilde{J}_{0,\,x_0}=I_{\C\otimes E}$, 
and a rest $\widetilde{R}_{p,k}(Z,Z')$ satisfying appropriate 
estimates corresponding to \eqref{toe2.711}.
%--------------------------------------

\noindent
We apply now to $P_{p,x_0}(Z,Z')$ the off-diagonal expansion 
\eqref{toe1.9c} and integrate:
%--------------------------------------
\begin{equation} \label{toe1.9f}
\int_{B(0,\varepsilon)}P_{p,x_0}(Z,Z')\widetilde{\kappa}(Z') e^{-p |Z'|_a^2}u_0\,dZ'
=\sum_{r=0}^kI_rp^{-\frac r2}+ I'_k\,,
\end{equation}
%--------------------------------------
where
%--------------------------------------
\begin{equation} \label{toe1.9g}
\begin{split}
I_r&=\int_{B(0,\varepsilon)}(\widetilde{J}_{r,\,x_0} 
\cP_{x_0})\,(\sqrt{p}Z,\sqrt{p}Z^{\prime})p^{n} e^{-p |Z'|_a^2}
u_0\,dZ'\,,\\
I'_k&=\int_{B(0,\varepsilon)}p^n\widetilde{R}_{p,k}(Z,Z') e^{-p |Z'|_a^2}
u_0\,dZ'\,.
\end{split}
\end{equation}
%--------------------------------------
The norms $Z\mapsto|Z|$ and $Z\mapsto|Z|_a$ are equivalent 
so by the exponential decay of $\cP_{x_0}$ we have,
as $p\to\infty$,
\begin{align}\label{toe1.9h}
I_r=\int_{|Z|_a\leqslant2\varepsilon}(\widetilde{J}_{r,\,x_0} 
\cP_{x_0})\,(\sqrt{p}Z,\sqrt{p}Z^{\prime})p^{n} e^{-p |Z'|_a^2}
%\widetilde{\rho}(Z')
u_0\,dZ'+O(p^{-\infty})\,.
\end{align}
We deal first with $I_0$. Let $|Z|_a\leqslant\varepsilon/2$. 
Then by (\ref{toe1.3}), 
\[
\int\limits_{|Z'|_a\leqslant\,2\varepsilon}\cP_{x_0}\,
(\sqrt{p}Z,\sqrt{p}Z^{\prime})p^ne^{-p |Z'|_a^2}\,dZ'=
\int\limits_{\C^{n}}\cP_{x_0}\,(\sqrt{p}Z,\sqrt{p}Z^{\prime})p^n e^{-p |Z'|_a^2}\,
dZ'+O(e^{-Cp}).
\]
By using $\cP_{x_0}$ is a projector operator, we get from 
(\ref{toe1.3}) and  (\ref{toe1.4}),
\begin{align}\label{toe3.0}
\begin{split}
&\int\limits_{\R^{2n}}\cP_{x_0}\,(\sqrt{p}Z,\sqrt{p}Z^{\prime})
p^n e^{-p |Z'|_a^2}\,dZ'\\
&=({\det}_{\C}|\bJ_{x_0}|)^{-1}\int\limits_{\R^{2n}}
\cP_{x_0}\Big(\sqrt{p}\,Z,\sqrt{p}\,Z^{\prime}\Big)
\cP_{x_0}\Big(\sqrt{p}\,Z^{\prime},0\Big)p^n\,dZ'\\
&=({\det}_{\C}|\bJ_{x_0}|)^{-1}\, \cP_{x_0}
\big(\sqrt{p}\,Z,0\big)
=e^{-p|Z|^2_a}\,,
\end{split}
\end{align}
where the first and third equalities follow from \eqref{toe1.3} 
and the second from \eqref{toe1.4}.
We obtain thus
%----------------------------
\begin{equation}\label{toe3.1}
\begin{split}
I_0&=\int_{|Z'|_a\leqslant\,2\varepsilon}(\widetilde{J}_{0,\,x_0} 
\cP_{x_0})\,(\sqrt{p}Z,\sqrt{p}Z^{\prime})p^{n}e^{-p |Z'|_a^2}
u_0\,dZ'+O(p^{-\infty})\\
&=e^{-p|Z|^2_a}\,e_{E}+O(p^{-\infty})\,.
\end{split}
\end{equation}
%----------------------------
In a similar manner we show that as $p\to\infty$,
%----------------------------
\begin{equation}\label{toe3.2}
I_r=e^{-p|Z|^2_a}\,Q_r(\sqrt{p}Z)\,e_{E}+O(p^{-\infty})\,.
\end{equation}
%----------------------------
Taking into account the definition of $\widetilde{R}_{p,k}(Z,Z')$, 
\eqref{toe2.711} and \eqref{toe1.9g} we obtain in the same vein,
as $p\to\infty$,
%----------------------------
\begin{equation}\label{toe3.3}
I'_k=O\left(p^{\frac{-k-1}{2}}e^{-C_0\,\sqrt{ p}\,|Z|}
\left(1+\sqrt{p}\,|Z|\right)^{2M}\right)+O(p^{-\infty})\,.
\end{equation}
%----------------------------
Combining \eqref{toe3.1}-\eqref{toe3.3} we get \eqref{toe4.18g5}.
From \eqref{e:est_peak1} and \eqref{toe4.18g5} we deduce 
immediately \eqref{toe4.17a}.

\noindent
Note that by (\ref{atoe2.7}), (\ref{toe4.17a}), we get
\begin{equation}\label{toe4.18g61}
\begin{split}
\|S^p\|^2_{L^2}&=\int_X\big|S^p(x)\big|^2dv_X(x)
%=\int_{B(x_0,\,2\varepsilon)}\big|S^p(x)\big|^2dv_X(x)
%+O(p^{-\infty})\\&
=\int_{B(0,\,2\varepsilon)}\big|S^p(Z)\big|^2\kappa_{x_{0}}(Z)dZ
+O(p^{-\infty})\,.
\end{split}
\end{equation}
By \eqref{atoe2.8} we have
\begin{equation}\label{toe4.18g62}
\begin{split}
&\int_{B(0,\,2\varepsilon)}p^{n}e^{-2p|Z|_a^2}
\kappa_{x_{0}}(Z)\,dZ
=\int_{B(0,\,2\varepsilon\sqrt{p})}e^{-2p|Z|_a^2}
\kappa_{x_{0}}(Z/\sqrt{p})\,dZ\\
&=\int_{\R^{2n}}e^{-\sum_{j=1}^n\frac{1}{2}p\,a_j|Z_j|^2}\,dZ
+O(p^{-1})=\textstyle\prod_{j=1}^n\frac{2\pi}{a_j}+O(p^{-1})\,.
\end{split}
\end{equation}
Further
\begin{equation}\label{toe4.18g32}
\int_{\R^{2n}}\left|p^{\frac n2}
e^{-p|Z|_a^2}Q_r(\sqrt{p}Z)\right|^2\,dZ<\infty\,,
\end{equation}
and
\begin{equation}\label{toe4.18g33}
\int_{B(0,\,2\varepsilon)} \left|p^{\frac{n-k-1}{2}}
e^{-C_0\,\sqrt{ p}|Z|}\left(1+\sqrt{p}\,|Z|\right)^{2M}\right|^2\,dZ
=O(p^{-k-1})\,.
\end{equation}
From \eqref{toe4.18g5}--\eqref{toe4.18g32} we obtain 
\eqref{toe4.17b}. The proof of Lemma \ref{lema_peak}
is completed.
\end{proof}
%---------------------------------------
\begin{lemma}\label{toe4.l3}
We have as $p\to\infty$
\begin{equation}
\text{$T_{f,p}S^p=O(p^{-\infty})$ uniformly on 
$X\setminus B(x_0,2\varepsilon)$}\,.
\end{equation}
\end{lemma}
%---------------------------------------
\begin{proof}
Due to Lemma  \ref{toet2.1} and \eqref{toe4.17a},
as $p\to\infty$, we have
\begin{equation}
\begin{split}
T_{f,p}S^p(x)%&=\int_X T_{f,\,p}(x,x')S^p(x')\,dv_X(x')\\
&=\int_{B(x_0,\varepsilon)} T_{f,\,p}(x,x')S^p(x')\,dv_X(x')
+O(p^{-\infty})\\
&=O(p^{-\infty})\,,
\end{split}
\end{equation}
uniformly for $x\in X\setminus B(x_0,2\varepsilon)$.
\end{proof}
%---------------------------------------
\begin{lemma}\label{toe4.l4}
As  $p\to\infty$, we have \begin{equation}\label{toe4.18b}
\int_{X\setminus B(x_0,2\varepsilon)}\big|T_{f,p}S^p
-\rho f(x_0)S^p\big|^2\,dv_X
=O(p^{-\infty})\,.
\end{equation}
\end{lemma}
%---------------------------------------
\begin{proof}
This follows immediately from \eqref{toe4.17a} and 
Lemma \ref{toe4.l3}.
\end{proof}
%---------------------------------------
\begin{lemma}\label{toe4.l5} For $p\gg1$,
we have 
\begin{equation}\label{toe4.18c}
\int_{B(x_0,2\varepsilon)}\big|T_{f,p}S^p
-\rho f(x_0)S^p\big|^2\,dv_X
\leqslant C^2\delta^2\|S^p\|^2_{L^2}\,.
\end{equation}
\end{lemma}
%---------------------------------------
\begin{proof} We have $P_pS^p=S^p$, since $S^p\in\Ker(D_p)$. 
    Thus $T_{f,p}S^p=P_p(fS^p)$.
Hence
%--------------------------------------
\[
(T_{f,p}S^p)(x)=\int_X P_p(x,x')f(x')S^p(x')\,dv_X(x')\,.
\]
%--------------------------------------
Let us split
%--------------------------------------
\begin{equation}\label{toe4.18d}
(T_{f,p}S^p)(x)-(\rho f(x_0)S^p)(x)=g_p(x)+h_p(x)\,,\\
\end{equation}
%--------------------------------------
where 
%--------------------------------------
\begin{equation*}
\begin{split}
&g_p(x):=\int_XP_p(x,x')\big[f(x')-\rho f(x_0)\big]S^p(x')\,
dv_X(x')\,,\\
&h_p(x):=\int_XP_p(x,x')\rho f(x_0)S^p(x')\,dv_X(x')
-(\rho f(x_0)S^p)(x)\,.
\end{split}
\end{equation*}
%--------------------------------------
Set 
%--------------------------------------
\[
R_{p,1,x_0}(Z,Z'):=P_{p,x_0}(Z,Z')\kappa_{x_0}(Z')
-p^n(J_0\cP_{p,x_0})(\sqrt{p}Z,\sqrt{p}Z')
%\kappa_{x_0}^{-\frac{1}{2}}(Z)\kappa_{x_0}^{-\frac{1}{2}}(Z')
\,.
\]
%--------------------------------------
By \eqref{toe2.711} we have
%--------------------------------------
\begin{equation}\label{toe4.18e}
\left|R_{p,1,x_0}(Z,Z^\prime)\right|\leqslant  \,C\,p^{n-\frac12}
(1+\sqrt{p}\,|Z|+\sqrt{p}\,|Z^{\prime}|)^M \,
e^{-C_0\,\sqrt{ p}\,|Z-Z^{\prime}|}+O(p^{-\infty})\,.
\end{equation}
%--------------------------------------
Set 
%--------------------------------------
\begin{align}\label{toe4.20}  \begin{split}
I_{1,p}(Z)&=\int_{B(0,2\varepsilon)}R_{p,1,x_0}(Z,Z^\prime)
\rho f(x_0)S^p(Z')\kappa(Z')\,dZ'\,,\\
S^p_0&=\sqrt{\det|\bJ_{x_0}|}p^{\frac n2}e^{-p|Z|_a^2}\,u_0,\\
I_{2,p}(Z)&=\int_{B(0,2\varepsilon)}p^n(J_{0,x_0}\cP)
(\sqrt{p}Z,\sqrt{p}Z')\rho f(x_0)(S^p-S^p_0)(Z')\,dZ'.
\end{split}\end{align}
We have
%--------------------------------------
\begin{align}\label{toe4.21a}
h_p(Z)=I_{1,p}(Z)+I_{2,p}(Z)+\rho f(x_0)\big(S^p-S^p_0\big)(Z')
+O(p^{-\infty})\,.
\end{align}

Estimates \eqref{toe4.18g5} and \eqref{toe4.18e} entail
%--------------------------------------
\begin{equation}\label{toe4.21}
\big|I_{1,p}(Z)\big|
\leqslant \,C\,p^{\frac{n-1}{2}}
(1+\sqrt{p}\,|Z|)^{2M} \,
e^{-C_0\,\sqrt{ p}\,|Z|}+O(p^{-\infty})\,.
\end{equation}
%--------------------------------------

%--------------------------------------
By \eqref{toe4.18g5},
\begin{equation}\label{toe4.22}
\big|I_{2,p}(Z)\big|\leqslant p^{\frac{n-1}{2}}\,e^{- C\sqrt{p}\,|Z|}\,.
\end{equation}
%--------------------------------------
By  \eqref{toe4.18g5},  \eqref{toe4.18g32}, \eqref{toe4.21a},
\eqref{toe4.21} and \eqref{toe4.22},  we obtain as $p\to \infty$,
%--------------------------------------
\begin{equation}\label{toe4.23}
\begin{split}
&\bigg(\int_{B(x_0,2\varepsilon)}|h_p(x)|^2\,
dv_X(x)\bigg)^{\frac12}\leqslant
\bigg(\int_{B(0,2\varepsilon)}|I_{1,p}(Z)|^2\,
dv_X(Z)\bigg)^{\frac12}+O(p^{-\infty})\\
&+\bigg(\int_{B(0,2\varepsilon)}|I_{2,p}(Z)|^2\,
dv_X(Z)\bigg)^{\frac12}+
\bigg(\int_{B(0,2\varepsilon)}\big|\rho f(x_0)
\big(S^p-S^p_0\big)(Z)\big|^2\,dv_X(Z)\bigg)^{\frac12}\\
&=O(p^{-\frac12})\,.
\end{split}
\end{equation}
%--------------------------------------
Moreover for $g_{p}$ from \eqref{toe4.18d}, we get by 
(\ref{eq:4.18}) that for 
$Z\in T_{x_{0}}X$, $|Z|\leqslant 2\varepsilon$ we have
%--------------------------------------
\begin{equation*}
\begin{split}
\big|g_p(Z)\big|\leqslant
\delta\int_{B(0, 4\varepsilon)}\widetilde{g}(Z,Z')\,dv_X(Z')
+O(p^{-\infty})\,,
\end{split}
\end{equation*}
%--------------------------------------
where
\[
\widetilde{g}(Z,Z')=\big(p^ne^{-C\sqrt{p}|Z-Z'|}
(1+\sqrt{p}Z+\sqrt{p}Z')^M\big)
p^{\frac n2}e^{-p|Z|_a^2}(1+\sqrt{p}|Z'|)^{M'}\,,
\]
hence
%--------------------------------------
\begin{equation}\label{toe4.24}
\begin{split}
\big|g_p(Z)\big|\leqslant C\delta\big(p^{\frac n2}\,
e^{-C\sqrt{p}\,|Z|}+O(p^{-\infty})\big)\,.
\end{split}
\end{equation}
%--------------------------------------
From \eqref{toe4.17b} and \eqref{toe4.24} we infer
\begin{equation}\label{toe4.20ab}
\int_{B(x_0,2\varepsilon)}|g_p(x)|^2\,dv_X(x)
\leqslant C_1^2\delta^2\|S^p\|_{L^2}^2\,. 
\end{equation}
%--------------------------------------
Now \eqref{toe4.18d}, \eqref{toe4.23} and \eqref{toe4.20ab} 
yield the desired estimate \eqref{toe4.18c}.
\end{proof}
%---------------------------

\noindent
Lemmas \ref{toe4.l4} and \ref{toe4.l5} yield \eqref{toe4.18a}. 
\comment{Let us now prove \eqref{toe4.18ab}. We have
\[
\begin{split}
\big\|\rho f(x_0)S^p\big\|^2_{L^2}&=\int_{B(0,\varepsilon)}
\big|\rho f(x_0)S^p(Z)\big|^2\kappa(Z)\,dZ\\
&=|f(x_0)|^2\int_{B(0,\varepsilon)}\big|\rho S^p(Z)\big|^2
\kappa(Z)\,dZ\,.
\end{split}
\]
Similar to \eqref{toe4.18g62} we have }
By (\ref{atoe2.8}), (\ref{toe4.18g5}) and (\ref{toe4.20}),
similar to \eqref{toe4.18g62}, we have for $p\to\infty$,
%------------------------
\begin{equation}\label{eq:4.24}
\begin{split}
\big\|\rho f(x_0)S^p\big\|^2_{L^2}&
=\int_{B(0,\varepsilon)}\big|f(x_0)\rho S^p(Z)\big|^2
\kappa(Z)\,dZ\\
&=\int_{B(0,\varepsilon/2)}\big|f(x_0)S^p_{0}(Z)\big|^2\,dZ
+O(p^{-1})\\
&= |f(x_{0}) u_{0}| ^2 
\int_{B(0,\varepsilon/2)} p ^n {\det}_{\C}|\bJ_{x_{0}}| e ^{-2p 
|Z|_{a}^2}\,dZ +O(p^{-1})\\
&=\|f\|_{\infty}^2+O(p^{-1})\,.
\end{split}
\end{equation}
%------------------------
This completes the proof of Proposition \ref{toe4p}.
\end{proof}
%-----------------------------------
\begin{remark}
If we improve the regularity of the section $f$ in 
Theorem \ref{toet4.3}, the convergence speed in \eqref{toe4.17}
improves accordingly (by improving Lemma  \ref{toe4.l5}):
\\[2pt]
(a) If $f\in\cC^1(X,\End(E))$ then there exists $C>0$ such that
\[
\norm{f}_{\infty}-\frac{C}{\sqrt{p}}\leqslant\norm{T_{f,p}}
\leqslant\norm{f}_{\infty}\,.
\] 
The estimate does not improve even if $f$ is a function.
\\[2pt]
(b) Assume that in Theorem \ref{toet4.3}, $(X,J,\omega)$ is 
K\"ahler, $(L,h^L,\nabla^L)\to X$ is a prequantum holomorphic 
line bundle (where $\nabla^L$ is the Chern connection)
satisfying \eqref{prequantum}  and $(E,h^E,\nabla^E)\to X$ is 
a holomorphic Hermitian vector bundle
with the Chern connection $\nabla ^E$. The K\"ahler assumption 
implies that $J_1(Z,Z')=0$ (cf. \cite[(4.1.102)]{MM07}). 
Using (\ref{atoe2.8}) and 
\[\int_{\C^n}e^{-\frac{\pi}{2} p |Z|^2} Z_j\,dZ=0,\]
%$\kappa_{x_0}(Z)=1+O(|Z|^2)$
we deduce that for $f\in\cC^2(X,\End(E))$
there exists $C>0$ such that
\[
\norm{f}_{\infty}-\frac{C}{p}\leqslant\norm{T_{f,p}}
\leqslant\norm{f}_{\infty}\,.
\] 
For $f\in\cC^1(X,\End(E))$ we have the same estimate as in (a), 
which cannot be improved.
\end{remark}
%-----------------------------------
\begin{remark}
Theorem \ref{toet4.3} holds also for large classes of non-compact 
manifolds, see \cite[\S7.5]{MM07}, \cite[\S5]{MM08b}, 
\cite[\S2.8]{MM11}.
%\cite{MM12b}. 
\end{remark}

%%%%%%%%%%%%%%%

\section{How far is $T_{f,p}$ from being self-adjoint or 
multiplication operator}\label{s5}%{Asymptotic results}

In this section we continue to work in the setting of Section 
\ref{sec:2.1}.
Let $(X,\omega)$ be a $2n$-dimensional connected compact
symplectic manifold, %$\dim_\mathbb{R} X=2n$, 
and $(L,h^L,\nabla^L)\to X$ be a prequantum line bundle satisfying 
\eqref{prequantum}.
We assume in the following that the vector bundle $E$ is trivial 
of rank one ($E=\C$).
To avoid lengthy formulas let us denote
\begin{align}\label{eq:4.1}
\hp=\ker(D_p).
\end{align}

Let $\cC^{0}(X)$ denote the space of continuous complex-valued 
functions on $X$. We shall denote by $\cC^{0}(X,\R)$ 
the space of continuous real-valued functions on $X$. 
For $f,g\in\cC^{0}(X)$ set 
\begin{align}\label{eq:4.2}
\big\langle f,g\big\rangle=\int_X f(x)\overline{g(x)}\:
\frac{\omega^n}{n!}\, \cdot
\end{align}

Let $L^2(X)$ be the completion of $\cC^{0}(X)$ with respect 
to the norm $\|f\|=\sqrt{\langle f,f\rangle }$ and let $L^2(X,\R)$ 
be the subspace of $L^2(X)$ that consists of 
(equivalence classes of) 
real-valued functions. 
By a slight abuse of notation, we denote by 
$\C\subset\cC^{0}(X)$ the $1$--\,dimensional subspace of 
$\cC^{0}(X)$ that consists of constant functions.   
Note: $L^2(X,\R)$ and $\C$ are closed subspaces of $L^2(X)$. 
For $f\in L^2(X)$ the orthogonal projection 
of $f$ onto $\C$ (with respect to the inner product 
$\langle \cdot,\cdot\rangle $) 
is the constant function 
\begin{align}\label{eq:4.3}
\fint_X f\:\frac{\omega^n}{n\,!}
:=\frac{\big\langle f,1\big\rangle }{\big\langle 1,1\big\rangle }=
\frac{1}{\vol (X)}\int_X f\:\frac{\omega^n}{n!}\, 
\quad\text{    with }  \vol (X)= \int_X \frac{\omega^n}{n!} ,
\end{align}
and the orthogonal projection of $f$ onto $L^2(X,\R)$ 
(with respect to the inner product $\Re\langle \cdot,\cdot\rangle $) 
is $\Re(f)$. 

Denote by $(\,\cdot\,,\cdot)_{HS}$ the Hilbert-Schmidt inner 
product 
on $\End (\hp )$: for $A,B\in \End (\hp )$ 
$$
(A,B)_{HS} =\tr (AB^*),
$$ 
where $B^*$ is the adjoint of $B$.
%and we will use the same notation for Hilbert-Schmidt inner product on 
%$\End(L^2(X, L^p\otimes \bE))$.
Note that the operator norm 
does not exceed the Hilbert-Schmidt norm.
The inner product on the underlying real vector space 
$\End _{\R}(\hp )$ is given by 
$$
(A,B)_{\R} =\Re \ \tr (AB^*).
$$ 
Denote by $\hph$ the subspace of $\End _{\R}(\hp )$ that 
consists of self-adjoint (Hermitian) operators. Denote by $\hpc$ 
the subspace of $\End (\hp )$ that consists of constant 
multiples of the identity operator. 

We shall use $\dist (v,V)$ to denote the distance between 
an element $v$ of a normed vector space and a closed subspace 
$V$. For example, for $f\in L^2(X)$ 
\begin{equation}\label{toe4.9}
\dist (f,\C)^2=\int_X |f|^2\:\frac{\omega^n}{n!}
- \frac{1}{\vol (X)}\bigg|\int_X f\:\frac{\omega^n}{n!}\bigg|^2\,
\cdot
\end{equation}
It is clear that for any $f\in L^{\infty}(X)$ we have
\begin{equation}\label{toe4.10}
T_{f,p}^*=T_{\overline{f},p}\,,
\end{equation}
hence
for $f\in L^{\infty}(X,\R)$ the operator 
$T_{f,p}$ is self-adjoint. We denote $M_{f}$
the pointwise multiplication by $f$.
On the other hand, if $f\in L^{\infty}(X)$ is constant then 
$T_{f,p}=M_f$. 
%Of course, any holomorphic function on $X$ is constant. 
%On the other hand, if $f\in\cC^{\infty}(X)$ is holomorphic 
%then $T_{f,p}=M_f$. 
%Of course, any holomorphic function on $X$ is constant. 

Theorem \ref{bptheorem1}, stated below, addresses, 
informally speaking, 
the following issues: given $f\in\cC^{0}(X)$, 

(1) how far $f$ is from being real-valued should be related to 
how far 
$T_{f,p}$ is from being self-adjoint (in $\End _{\R}(\hp )$), 

(2) how far $f$ is from being constant 
should be related to how far $T_{f,p}$ is from being 
a constant multiple of the identity operator (in $\End (\hp )$).  
%(2) how far $f$ is from being holomorphic (constant) 
%should be related to how far $T_{f,p}$ is from being 
%a constant multiple of the identity operator (in $\End (\hp )$).  
 
\begin{theorem}
\label{bptheorem1}
Let $f\in L^{\infty}(X)$. Write
\begin{align}\label{toe4.11a}\begin{split}
p^{-n}\big[\dist(T_{f,p}\,,\hph)\big]^2
&=\big[\dist (f,L^2(X,\R))\big]^2+R_{1,p}\,, \\
p^{-n}\big[\dist(T_{f,p}\,,\hpc )\big]^2&
=\big[\dist(f,\C)\big]^2 +R_{2,p}\,.
\end{split}
\end{align}
Then $R_{i,p}$\,, $i=1,2$\,, satisfy as $p\to\infty$
\begin{equation}\label{toe4.11}
R_{i,p}=\begin{cases}
o(1)\,,&\quad \text{uniformly on } f\in \cA_{\infty} ^0\, , \\
%\text{for $f\in\cC^0(X)$}\,,\\
O(p^{-1/2})\,,&\quad \text{uniformly on } f\in \cA_{\infty} ^1\, , \\
%\text{for $f\in\cC^1(X)$}\,,\\
O(p^{-1})\,,&\quad\text{uniformly on } f\in \cA_{\infty} ^2\, .
%\text{for $f\in\cC^2(X)$}\,.
\end{cases}
\end{equation}
\end{theorem}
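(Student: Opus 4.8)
The plan is to turn both distances into traces of products of Toeplitz operators and then quote Theorem~\ref{toet2.5}. Here $\dist$ must be understood with respect to the Hilbert--Schmidt norm $\|\cdot\|_{HS}$ on $\End(\hp)$: this is the only choice compatible with the normalization $p^{-n}$ in \eqref{toe4.11a} and with the formula \eqref{toe4.9}, since $\|T_{f,p}\|_{HS}^2\sim p^n\|f\|^2_{L^2}$ while the operator norm stays bounded. We view $T_{f,p}$ as an element of $\End(\hp)$ via its restriction to $\hp=\ker(D_p)$ (it vanishes on $\hp^{\perp}$), so all traces below may equally be computed on $\hp$ or on $L^2(X,L^p\otimes\bE)$. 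First I would record two elementary orthogonal-projection identities. In $\End_{\R}(\hp)$ with inner product $(A,B)_{\R}=\Re\tr(AB^*)$ the splitting $A=\tfrac12(A+A^*)+\tfrac12(A-A^*)$ into Hermitian and anti-Hermitian parts is orthogonal (for $H=H^*$, $K=-K^*$ the number $\tr(HK^*)=-\tr(HK)$ is purely imaginary), so $\dist(A,\hph)^2=\tfrac14\|A-A^*\|_{HS}^2$; likewise the $\|\cdot\|_{HS}$--orthogonal projection of $A\in\End(\hp)$ onto $\hpc$ is $\tfrac{\tr A}{d_p}\Id_{\hp}$, with $d_p=\dim\hp$, so $\dist(A,\hpc)^2=\|A\|_{HS}^2-|\tr A|^2/d_p$.

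The next step is to apply these with $A=T_{f,p}$, using $T_{f,p}^{*}=T_{\overline f,p}$ from \eqref{toe4.10}. Then $T_{f,p}-T_{f,p}^{*}=2\sqrt{-1}\,T_{\Im f,p}$, and since $\Im f$ is real,
\[
\dist(T_{f,p},\hph)^2=\|T_{\Im f,p}\|_{HS}^2=\tr\bigl(T_{\Im f,p}^{2}\bigr),\qquad
\dist(T_{f,p},\hpc)^2=\tr\bigl(T_{f,p}T_{\overline f,p}\bigr)-\frac{|\tr T_{f,p}|^{2}}{d_p}.
\]
Now I would feed in the trace asymptotics. Theorem~\ref{toet2.5} with $m=2$ gives, uniformly over the indicated families, $p^{-n}\tr(T_{\Im f,p}^{2})=\int_X(\Im f)^2\,\tfrac{\om^n}{n!}+R_p$ and $p^{-n}\tr(T_{f,p}T_{\overline f,p})=\int_X|f|^2\,\tfrac{\om^n}{n!}+R_p$, with $R_p$ as in \eqref{eq3.31}; the case $m=1$ of the same theorem (or \eqref{toe4.1}, using $\bb_0\,dv_X=\om^n/n!$, which follows from \eqref{toe2.712} and \eqref{toe1.3a}) gives $p^{-n}\tr T_{f,p}=\int_X f\,\tfrac{\om^n}{n!}+O(p^{-1})$ and $p^{-n}d_p=p^{-n}\tr T_{1,p}=\vol(X)+O(p^{-1})$. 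Dividing, $p^{-n}|\tr T_{f,p}|^{2}/d_p=\vol(X)^{-1}\bigl|\int_X f\,\tfrac{\om^n}{n!}\bigr|^{2}+O(p^{-1})$, uniformly for $f\in\cA^{k}_{\infty}$. Comparing with \eqref{toe4.9} and with $\dist(f,L^2(X,\R))^2=\|f-\Re f\|^2=\|\Im f\|^2_{L^2}$, one reads off $R_{1,p}=R_p$ and $R_{2,p}=R_p+O(p^{-1})$; since $O(p^{-1})$ is subordinate to each of $o(1)$, $O(p^{-1/2})$, $O(p^{-1})$, this is exactly \eqref{toe4.11}.

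The only point that needs genuine care --- and, frankly, the only non-routine thing in the argument --- is uniformity. To invoke Theorem~\ref{toet2.5} with the symbols $\Im f$, or with the pair $(f,\overline f)$, I first need to know that these families still satisfy the hypotheses of Definition~\ref{toed2.2}. This is the case because every one of the defining conditions (equicontinuity, uniform boundedness of $\nabla^{E}$--derivatives, equicontinuity of $\nabla^{E}\nabla^{E}$) is preserved under $f\mapsto\overline f$, $f\mapsto\Re f$ and $f\mapsto\Im f$ --- e.g. $|\Im f(x)-\Im f(y)|\leqslant|f(x)-f(y)|$ --- so one may enlarge $\cA^{k}_{\infty}$ beforehand to be stable under these operations, at no cost. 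Everything analytic is already contained in Theorem~\ref{toet2.5}; the argument above is just its linear-algebraic repackaging, so I do not expect any serious obstacle beyond keeping track of these book-keeping points.
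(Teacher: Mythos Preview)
Your proposal is correct and follows essentially the same route as the paper: compute the Hilbert--Schmidt distance via the orthogonal projections onto $\hph$ and $\hpc$, then reduce to the trace asymptotics of Theorem~\ref{toet2.5}. The only cosmetic differences are that the paper expands $\tfrac14\tr[(A-A^*)(A^*-A)]$ into four traces rather than contracting to $\tr(T_{\Im f,p}^2)$, and obtains $p^{-n}d_p=\vol(X)+O(p^{-1})$ from the Atiyah--Singer formula \eqref{toe4.14} rather than from $\tr T_{1,p}$; your bookkeeping about closing the families $\cA^k_\infty$ under $f\mapsto\overline f,\Re f,\Im f$ is a point the paper leaves implicit.
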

\begin{proof}
We consider first the case $i=1$. Let $A\in \End (\hp )$. 
The orthogonal projection of $A$ 
onto $\hph$ is $\frac{1}{2}(A+A^*)$. We have: 
\begin{align}\label{toe4.12a}
\begin{split}
\big[\dist (A,\hph )\big]^2 &= \big(\tfrac{1}{2}(A-A^*),
\tfrac{1}{2}(A-A^*)\big)_{\R }=
\tfrac{1}{4} \tr \big[(A-A^*)(A^*-A)\big]
\\
&=\tfrac{1}{4}\big[-\tr (A^2)-\tr ((A^*)^2)+\tr (AA^*)
+\tr (A^* A)\big] .
\end{split}
\end{align}
We apply the previous formula for $A=T_{f,p}$ by
using Theorem \ref{toet2.5} and \eqref{toe4.10},
we get as $p\to\infty$  
\begin{align}\label{toe4.13a}
\begin{split}
p^{-n}\big[\dist (A,\hph )\big]^2&=
\tfrac14\int_X \big(-f^2-\overline{f}^{\,2}+
2f\overline{f}\,\big)\,\frac{\omega^n}{n\,!}+R_{1,p}\\ 
&= \int_X \big|\Im(f)\big|^2\,\frac{\omega^n}{n\,!}+R_{1,p}\,,
\end{split}
\end{align}
where $R_{1,p}$ satisfies \eqref{toe4.11}.
This proves the assertion of the Theorem for $i=1$, since 
\begin{align}\label{toe4.14a}
\big[\dist(f,L^2(X,{\R}))\big]^2=\int_X \big|f-\Re(f)\big|^2\:
\frac{\omega^n}{n\,!}\,\cdot
\end{align}
We consider now the case $i=2$.
For $A\in\End(\hp)$ the orthogonal projection 
of $A$ onto $\hpc$ is $\alpha  {\rm Id}_{\mH_{p}}$, 
%where $I_p$ is the identity operator 
and $\alpha =\frac{1}{\dim\hp}\tr(A)$. Therefore  
\begin{equation}\label{toe4.12}
\begin{split}
\big[\dist(A,\hpc )\big]^2&
=(A-\alpha {\rm Id}_{\mH_{p}},A-\alpha {\rm Id}_{\mH_{p}})
=\tr\big[(A-\alpha {\rm Id}_{\mH_{p}})
(A^*-\bar{\alpha }{\rm Id}_{\mH_{p}})\big]\\
&=\tr (AA^*-\alpha A^*-\bar{\alpha }A
+\alpha \bar{\alpha }{\rm Id}_{\mH_{p}})\\
&=\tr (AA^*)-\frac{1}{\dim\hp}\tr (A)\tr (A^*)\,,
\end{split}
\end{equation}
since $\tr({\rm Id}_{\mH_{p}})=\dim\hp$.
Note that by the Atiyah-Singer index formula \eqref{as}, we have 
\begin{equation}\label{toe4.14}
\dim\hp=p^n\vol(X)+O(p^{n-1})\,.
\end{equation}
We apply formula \eqref{toe4.12} for $A=T_{f,p}$ by
using Theorem \ref{toet2.5}, \eqref{toe4.9}, \eqref{toe4.10} 
and \eqref{toe4.14} to get 
\begin{align}\label{toe4.15a}
\begin{split}
p^{-n}\big[\dist(T_{f,p}\,,\hpc)\big]^2&=\int_X |f|^2\:
\frac{\omega^n}{n\,!}- \frac{1}{\vol (X)}\bigg|\int_X f\:
\frac{\omega^n}{n\,!}\bigg|^2+R_{2,p}\\
&=\big[\dist (f,\C)\big]^2+R_{2,p}\,,
\end{split}
\end{align}
where $R_{2,p}$ satisfies \eqref{toe4.11} as $p\to\infty$.
\end{proof}

It is also intuitively clear that how far $f$ is from being constant
(i.\,e.\ how far $df$ is from zero) should be related 
to how far $T_{f,p}$ is from $M_fP_p$.
This is addressed in the next Proposition. The main point here is 
to estimate the Hilbert-Schmidt norm of the difference 
$T_{f,p}-M_fP_p$ \emph{uniformly} for $f\in\cC^1(X)$.
%----------------------------------
\begin{proposition}\label{bpprop} 
We suppose that 
$g ^{TX}(\cdot,\cdot)= \omega(\cdot,J\cdot)$.
Let $\lambda>0$ be the lowest positive eigenvalue of 
the Laplace operator $\Delta_{g^{TX}}$ acting on functions. 
Then  for any $\var>0$, there exists $p_{0}>0$
such that for any $p\geqslant p_{0}$, $f\in\cC^1(X)$, 
we have
\begin{equation}\label{eq:4.10a}
p^{-n}\|T_{f,p}-M_fP_p\|_{HS}^2
\leqslant\lambda^{-1} (1+\varepsilon)\|df\|^2_{L^2}. 
\end{equation}
\end{proposition}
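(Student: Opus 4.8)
The plan is to write $\|T_{f,p}-M_fP_p\|_{HS}^2$ as a sum over an orthonormal basis of $\Ker(D_p)$, to use that subtracting a constant from $f$ leaves $T_{f,p}-M_fP_p$ unchanged, and then to combine the diagonal Bergman kernel expansion with the sharp Poincar\'e inequality for $\Delta_{g^{TX}}$ on functions.

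First I would note that $cS^p_i\in\Ker(D_p)$ for every constant $c\in\C$, so $P_p(cS^p_i)=cS^p_i$ and hence $T_{f,p}-M_fP_p=T_{f-c,p}-M_{f-c}P_p$. Since $d(f-c)=df$, replacing $f$ by $f-\fint_Xf\,\frac{\omega^n}{n!}$ changes neither side of \eqref{eq:4.10a}, so we may assume $\fint_Xf\,\frac{\omega^n}{n!}=0$. Now, for $S^p_i\in\Ker(D_p)$ one has $(T_{f,p}-M_fP_p)S^p_i=P_p(fS^p_i)-fS^p_i$, and since $P_p(fS^p_i)$ is orthogonal to $fS^p_i-P_p(fS^p_i)$ we get $\|(T_{f,p}-M_fP_p)S^p_i\|_{L^2}^2=\|fS^p_i\|_{L^2}^2-\|P_p(fS^p_i)\|_{L^2}^2\leqslant\|fS^p_i\|_{L^2}^2$. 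Because $T_{f,p}-M_fP_p$ vanishes on $\Ker(D_p)^\perp$, picking an orthonormal basis $\{S^p_i\}_{i=1}^{d_p}$ of $\Ker(D_p)$ and using $\sum_i|S^p_i(x)|^2=\tr[P_p(x,x)]$, which is \eqref{bk2.4a} on the diagonal, we obtain
\begin{equation}\label{eq:bpprf1}
\|T_{f,p}-M_fP_p\|_{HS}^2=\sum_{i=1}^{d_p}\big\|(T_{f,p}-M_fP_p)S^p_i\big\|_{L^2}^2\leqslant\int_X|f(x)|^2\,\tr[P_p(x,x)]\,dv_X(x).
\end{equation}

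Next I would invoke the diagonal Bergman expansion \eqref{toe1.9b}, \eqref{toe2.712}: under the hypothesis $g^{TX}(\cdot,\cdot)=\omega(\cdot,J\cdot)$ the Riemannian volume form equals $\frac{\omega^n}{n!}$ and, with $E=\C$ (so that $\bb_0\equiv I_{\C\otimes E}$), $\tr[P_p(x,x)]=p^n+O(p^{n-1})$ uniformly in $x\in X$. Hence, given $\varepsilon>0$, there is $p_0$ depending only on $\varepsilon$ such that $\tr[P_p(x,x)]\leqslant(1+\varepsilon)p^n$ for all $x\in X$ and $p\geqslant p_0$; combined with \eqref{eq:bpprf1} this gives
\begin{equation}\label{eq:bpprf2}
p^{-n}\|T_{f,p}-M_fP_p\|_{HS}^2\leqslant(1+\varepsilon)\int_X|f|^2\,\frac{\omega^n}{n!}=(1+\varepsilon)\,\|f\|_{L^2}^2\,.
\end{equation}

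Finally, since $f$ has zero mean with respect to $\frac{\omega^n}{n!}$, the Poincar\'e inequality for the Hodge Laplacian $\Delta_{g^{TX}}$ on functions, whose optimal constant is the reciprocal of its lowest positive eigenvalue $\lambda$, gives $\|f\|_{L^2}^2\leqslant\lambda^{-1}\|df\|_{L^2}^2$; substituting this into \eqref{eq:bpprf2} yields \eqref{eq:4.10a}. The estimate is uniform in $f\in\cC^1(X)$ because $p_0$ depends only on $\varepsilon$ while $\lambda$ is a fixed geometric constant. No step is genuinely hard; the one point requiring care is to retain the cancellation carried by the mean of $f$ — estimating $(T_{f,p}-M_fP_p)S^p_i$ directly by $\|fS^p_i\|$ without first subtracting $c=\fint_Xf\,\frac{\omega^n}{n!}$ would leave $\int_X|f|^2\,\omega^n/n!$ in place of $\dist(f,\C)^2$, which is not controlled by $\|df\|_{L^2}^2$. (Alternatively one could bound $\|fS^p_i-P_p(fS^p_i)\|^2$ using the spectral gap of $D_p^2$ from Theorem \ref{specDirac} together with $\overline\partial(fS^p_i)=(\overline\partial f)S^p_i$, which even gives $O(p^{-1})\|df\|_{L^2}^2$; but the Poincar\'e route is the one reproducing the stated constant $\lambda^{-1}(1+\varepsilon)$.)
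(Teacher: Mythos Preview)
Your proof is correct and follows essentially the same route as the paper. The paper phrases the reduction to mean-zero $f$ as the Hodge decomposition $f=f_1+f_2$ with $f_1\in\Ker(\Delta)=\C$, and it obtains the key inequality $\|T_{f_2,p}-M_{f_2}P_p\|_{HS}^2\leqslant\tr[P_p|f_2|^2P_p]$ by a short trace manipulation rather than via your Pythagorean identity $\|fS_i^p-P_p(fS_i^p)\|^2=\|fS_i^p\|^2-\|P_p(fS_i^p)\|^2$; these are equivalent, and from that point on both arguments invoke the diagonal Bergman expansion and the Poincar\'e inequality exactly as you do.
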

%----------------------------------
\begin{proof}
Denote for simplicity $\Delta=\Delta_{g^{TX}}$. 
The Hodge decomposition of $f$ has the form $f=f_1+f_2$, 
where $f_1\in\Ker(\Delta)$ is the harmonic component of $f$ 
and $f_2\in(\Ker\Delta)^\perp$. We have $\Ker(\Delta)=\C$ 
and $f_1=\fint_Xf\,dv_X$. 
Moreover, 
\begin{align}\label{eq:4.11}
\|f_2\|_{L^2}^2\leqslant\lambda^{-1}\|df_2\|_{L^2}^2
=\lambda^{-1}\|df\|_{L^2}^2.
\end{align}
Now
\begin{equation}\label{eq:4.12}
T_{f,p}-M_fP_p = T_{f_{2},p}-M_{f_{2}}P_p\,.
\end{equation}
As $\tr[T_{f_{2},p}T_{f_{2},p} ^*]>0$, we get by using 
\eqref{bk2.4a} and \eqref{eq:4.12} that
\begin{equation}\label{eq:4.13}
\begin{split}
\|T_{f,p}-M_fP_p\|_{HS}^2 
&= \tr \Big[ T_{f_{2},p}T_{f_{2},p} ^*
+f_{2} P_{p} \ov{f}_{2} -  T_{f_{2},p} \ov{f}_{2}
- f_{2} P_{p} \ov{f}_{2} P_{p} \Big]\\
&= \tr \Big[ P_{p} f_{2}\ov{f}_{2} P_{p} 
-T_{f_{2},p}T_{f_{2},p}  ^*\Big]
\leqslant  \tr \Big[ P_{p} |f_{2}| ^2 P_{p}\Big]\\
&=\sum_{i=1} ^{d_{p}} \| f_{2} S^P_{i}\|_{L^2} ^2 
= \int_{X}  |f_{2}(x)| ^2 \tr [P_{p}(x,x)] dv_{X}(x).
%\leqslant C p  ^n \|f_2\|_{L^2}^2.
\end{split}
\end{equation}
By  the argument after (\ref{toe1.9b}), 
for any $\varepsilon>0$, there exists $p_{0}>0$
such that for $p\geqslant p_{0}$, we have 
\begin{align}\label{eq:4.14}
\int_{X}  |f_{2}(x)| ^2 \tr [P_{p}(x,x)] dv_{X}(x)
\leqslant (1+\varepsilon)  p  ^n \|f_2\|_{L^2}^2.
\end{align}
By \eqref{eq:4.11},  \eqref{eq:4.13} and \eqref{eq:4.14}, 
we get \eqref{eq:4.10a}.
\end{proof}
%----------------------------
\begin{remark}
The results in this paper hold in particular in the case of the K\"ahler 
quantization. Let us assume that $(X,J,g^{TX})$ is a compact K\"ahler manifold (i.\,e., $J$
is integrable and $g^{TX}(u,v)=\om(u,Jv)$ for 
$u,v\in TX$). Assume moreover that the bundles $L$ and $E$ 
are holomorphic and $\nabla^L$, $\nabla^E$ are the 
Chern connections. Then by Remark \ref{R:kaehler},
%the Dirac operator \eqref{defDirac} takes the form 
%$D_p=\sqrt{2}\big(\,\overline{\partial}+\overline{\partial}^{\,*}\big)$, where 
%$\overline{\partial}=\overline{\partial}^{L^p\otimes E}$. 
%Hence $\Ker(D_p)=H^0(X,L^p\otimes E)$ for $p\gg1$, that is, 
the quantum space are the spaces of global holomorphic sections 
of $L^p\otimes E$. We can even dispense of the K\"ahler condition 
$g^{TX}(u,v)=\om(u,Jv)$ for 
$u,v\in TX$, see Remark \ref{R:kaehler}.

To illustrate the kind of results we obtain in the K\"ahler case let us 
formulate the following special case of of Proposition \ref{bpprop}.
\end{remark}
%-----------------------------
\begin{proposition}\label{bpprop1} 
Assume that $(X,J,g^{TX})$ is a compact K\"ahler manifold and the 
bundles $L$ and $E$ are holomorphic. 
Let $\lambda>0$ be the lowest positive eigenvalue of 
the Kodaira Laplace operator 
$\overline{\partial}^{\,*}\overline{\partial}$ acting on functions. 
Then  for any $\var>0$, there exists $p_{0}>0$
such that for any $p\geqslant p_{0}$, $f\in\cC^1(X)$, 
we have
\begin{equation}\label{toe4.151}
p^{-n}\|T_{f,p}-M_fP_p\|_{HS}^2
\leqslant\lambda^{-1} (1+\varepsilon)\|\overline{\partial}f\|^2_{L^2}. 
\end{equation}
\end{proposition}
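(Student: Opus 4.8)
The plan is to transcribe the proof of Proposition \ref{bpprop}, replacing the Hodge theory of the real Laplacian $\Delta_{g^{TX}}$ by that of the Kodaira Laplace operator $\Box:=\overline\partial^{\,*}\overline\partial$ acting on $\cC^\infty(X)$. In the K\"ahler holomorphic setting of Remark \ref{R:kaehler} we have $D_p=\sqrt2(\overline\partial+\overline\partial^{\,*})$ and $\hp=\Ker(D_p)=H^0(X,L^p\otimes E)$ for $p\gg1$, so the entire framework of Section \ref{s5} stays in force and in particular $T_{f,p}^*=T_{\bar f,p}$. First I would decompose $f\in\cC^1(X)$ with respect to the spectral theory of $\Box$: since $\langle\Box u,u\rangle=\|\overline\partial u\|^2$, on functions $\Ker(\Box)=\Ker(\overline\partial)$, and this equals the constants $\C$ because $X$ is compact and connected. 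Hence write $f=f_1+f_2$ with $f_1=\fint_X f\,\frac{\omega^n}{n!}\in\C$ and $f_2\in(\Ker\Box)^\perp$ (orthogonal complement for the inner product \eqref{eq:4.2}); one has $\overline\partial f_1=0$, and the spectral gap for $\Box$ gives the analogue of \eqref{eq:4.11},
\[
\|f_2\|_{L^2}^2\leqslant\lambda^{-1}\langle\Box f_2,f_2\rangle
=\lambda^{-1}\|\overline\partial f_2\|_{L^2}^2=\lambda^{-1}\|\overline\partial f\|_{L^2}^2.
\]

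The remaining steps are then identical to those in the proof of Proposition \ref{bpprop}. Because $f_1$ is constant, $T_{f_1,p}=P_pf_1P_p=M_{f_1}P_p$, so $T_{f,p}-M_fP_p=T_{f_2,p}-M_{f_2}P_p$; the Hilbert--Schmidt computation \eqref{eq:4.13}, which only uses the general Toeplitz formalism and \eqref{bk2.4a}, yields
\[
\|T_{f,p}-M_fP_p\|_{HS}^2\leqslant\tr\big[P_p|f_2|^2P_p\big]
=\int_X|f_2(x)|^2\,\tr[P_p(x,x)]\,dv_X(x).
\]
Finally, the normalization $g^{TX}(u,v)=\omega(u,Jv)$ forces $\bb_0\equiv I_{\C\otimes E}$ (the remark after \eqref{toe2.712}), so by \eqref{toe1.9b} one has $\tr[P_p(x,x)]=p^n+O(p^{n-1})$ uniformly on $X$ (recall $E=\C$ in Section \ref{s5}, so $\rank E=1$); thus for any $\varepsilon>0$ there is $p_0$ with $\tr[P_p(x,x)]\leqslant(1+\varepsilon)p^n$ for $p\geqslant p_0$, exactly as in \eqref{eq:4.14}. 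Combining this with the eigenvalue bound on $\|f_2\|_{L^2}$ gives \eqref{toe4.151}, with $\varepsilon$ and $p_0$ independent of $f$.

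I do not anticipate a genuine obstacle, as the statement is a verbatim adaptation of Proposition \ref{bpprop}. The one point to watch is that one should not try to derive \eqref{toe4.151} directly from Proposition \ref{bpprop} via the K\"ahler identity $\Delta_{g^{TX}}=2\Box$ on functions: for complex-valued $f$ the quantities $\|df\|_{L^2}^2$ and $2\|\overline\partial f\|_{L^2}^2$ need not agree, so the decomposition and the eigenvalue estimate must be carried out with $\Box$ from the start. A minor technical point is that the Rayleigh-quotient estimate $\|\overline\partial f_2\|_{L^2}^2\geqslant\lambda\|f_2\|_{L^2}^2$ for $f_2\perp\C$ is applied to a merely $\cC^1$ (indeed $W^{1,2}$) function, which is standard.
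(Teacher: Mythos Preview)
Your proposal is correct and follows exactly the approach the paper intends: the paper presents Proposition~\ref{bpprop1} without proof, introducing it in the preceding Remark as ``the following special case of Proposition~\ref{bpprop}'', so the argument is the verbatim transcription of the proof of Proposition~\ref{bpprop} with $\Box=\overline\partial^{\,*}\overline\partial$ in place of $\Delta_{g^{TX}}$, precisely as you describe. Your caveat that one must rerun the Hodge decomposition with $\Box$ rather than invoke $\Delta=2\Box$ (since $\|df\|_{L^2}^2\neq2\|\overline\partial f\|_{L^2}^2$ for complex-valued $f$) is well taken and is the only subtlety in the adaptation.
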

\smallskip
\begin{ack} We are grateful to Brian Hall and Armen Sergeev 
    for sending us interesting references treating 
    Berezin-Toeplitz operators with non-smooth symbols.
\end{ack}
\medskip
%%%%%          The bibliography
%%%%%%%%%%%%%%%%%%%%%%%%%%%%%%%

%\bibliographystyle{amsplain}
%\bibliography{mmabook,mmbook}

\def\cprime{$'$} \def\cprime{$'$}
\providecommand{\bysame}{\leavevmode\hbox to3em{\hrulefill}\thinspace}
\providecommand{\MR}{\relax\ifhmode\unskip\space\fi MR }
% \MRhref is called by the amsart/book/proc definition of \MR.
\providecommand{\MRhref}[2]{%
  \href{http://www.ams.org/mathscinet-getitem?mr=#1}{#2}
}
\providecommand{\href}[2]{#2}

\end{document}